\documentclass{amsart}


\usepackage{appendix}
\usepackage{amsmath}
\usepackage{amsthm}
\usepackage{amssymb}
\usepackage{amsfonts}
\usepackage{bm}
\usepackage[shortlabels]{enumitem}
\usepackage[bookmarks=true,hyperindex,pdftex,colorlinks,citecolor=red, linkcolor=cyan]{hyperref}
\usepackage{centernot} 
\usepackage{marginnote} 
\usepackage[left=3.5cm,right=3.5cm,top=3cm,bottom=3cm]{geometry}
\usepackage{bbm}
\usepackage[all]{xy}
\usepackage{tikz}
\usepackage{comment}
\usetikzlibrary{arrows}
\usetikzlibrary{shapes,decorations}
\usetikzlibrary{positioning}
\usepackage{xcolor}

\usepackage{mathrsfs}


  \newcommand{\eq}[1][r]
   {\ar@<-3pt>@{-}[#1]
    \ar@<-1pt>@{}[#1]|<{}="gauche"
    \ar@<+0pt>@{}[#1]|-{}="milieu"
    \ar@<+1pt>@{}[#1]|>{}="droite"
    \ar@/^2pt/@{-}"gauche";"milieu"
    \ar@/_2pt/@{-}"milieu";"droite"}


\DeclareMathOperator{\dist}{dist}                           
\DeclareMathOperator{\lspan}{span}                          
\DeclareMathOperator{\aco}{aco}                             
\DeclareMathOperator{\supp}{supp}                           
\DeclareMathOperator{\rad}{rad}                             
\DeclareMathOperator{\diam}{diam}                           
\DeclareMathOperator{\Lip}{Lip}                             

\newcommand{\N}{\mathbb{N}}             
\newcommand{\R}{\mathbb{R}}             
\newcommand{\C}{\mathbb{C}}             
\newcommand{\K}{\mathbb{K}}             

\newcommand{\pare}[1]{\left({#1}\right)}                    
\newcommand{\set}[1]{\left\{{#1}\right\}}                   
\newcommand{\norm}[1]{\left\|{#1}\right\|}                  
\newcommand{\dual}[1]{{#1}^\ast}                            
\newcommand{\wscl}[1]{\overline{#1}^{\dual{w}}}             

\newcommand{\iten}{\ensuremath{\widehat{\otimes}_\varepsilon}} 
\newcommand{\pten}{\ensuremath{\widehat{\otimes}_\pi}}         

\newcommand{\lipfree}[1]{\mathcal{F}({#1})}                 
\newcommand{\F}{\mathcal{F}}                                
\newcommand{\FS}{\mathcal{FS}}                                
\newcommand{\lipfreesub}[2]{\mathcal{F}_{#1}({#2})}         
\newcommand{\lipnorm}[1]{\norm{#1}_L}                       
\newcommand{\ideal}{\mathcal{I}}                   
\newcommand{\idealsub}[2]{\mathcal{I}_{#1}({#2})}           
\newcommand{\hull}{\mathcal{H}}                    

\newcommand{\restricted}{\mathord{\upharpoonright}}

\def\<{\langle}
\def\>{\rangle}
\newcommand{\ep}{\varepsilon}

\newcommand{\re}{\mathrm{Re}}
\newcommand{\im}{\mathrm{Im}}


\theoremstyle{plain}
\newtheorem{theorem}{Theorem}[section]
\newtheorem{lemma}[theorem]{Lemma}
\newtheorem{corollary}[theorem]{Corollary}
\newtheorem{proposition}[theorem]{Proposition}

\theoremstyle{definition}
\newtheorem*{definition*}{Definition}
\newtheorem{definition}[theorem]{Definition}
\newtheorem{example}[theorem]{Example}

\theoremstyle{remark}
\newtheorem{remark}[theorem]{Remark}

\begin{document}

\title[A pre-adjoint approach on weighted comp. operators between Lip. spaces]{A pre-adjoint approach on weighted composition operators between spaces of Lipschitz functions}

\author[A. Abbar]{Arafat Abbar}

\author[C. Coine]{Cl\'ement Coine}

\author[C. Petitjean]{Colin Petitjean}

\address[A. Abbar]{LAMA, Univ Gustave Eiffel, Univ Paris Est Creteil, CNRS, F--77447, Marne-la-Vall\'ee, France}
\email{abbar.arafat@gmail.com}

\address[C. Coine]{Normandie Univ, UNICAEN, CNRS, LMNO, 14000 Caen, France}
\email{clement.coine@unicaen.fr}

\address[C. Petitjean]{LAMA, Univ Gustave Eiffel, Univ Paris Est Creteil, CNRS, F--77447, Marne-la-Vall\'ee, France}
\email{colin.petitjean@univ-eiffel.fr}

\date{} 

\subjclass[2020]{Primary 47B07, 46B20, 46B50; Secondary 54E35}


\keywords{Compact operator, Lipschitz free space, weighted composition operators}

\maketitle

\begin{abstract}
We consider weighted composition operators, that is operators of the type $g \mapsto w \cdot g \circ f$, acting on spaces of Lipschitz functions. Bounded weighted composition operators, as well as some compact weighted composition operators, have been characterized quite recently. In this paper, we provide a different approach involving their pre-adjoint operators, namely the weighted Lipschitz operators acting on Lipschitz free spaces. This angle allows us to improve some results from the literature. Notably, we obtain a distinct characterization of boundedness with a precise estimate of the norm. We also characterise injectivity, surjectivity, compactness and weak compactness in full generality.
\end{abstract}

\section{Introduction}

Let $(M,d)$ be a metric space and let $\K = \R$ or $\C$. For every Lipschitz function $f : M \to \K$, we let
$$
\|f\|_L :=  \sup_{x \neq y \in M} \frac{|f(x)-f(y)|}{d(x,y)}.
$$
Denote by $\Lip(M, \K)$ the Banach space of bounded Lipschitz maps from $M$ to $\K$ equipped with the norm
$$
\forall f \in \Lip(M,\K), \quad \|f\| = \max(\|f\|_{\infty} , \|f\|_L).
$$
We often omit the letter $\K$.
Note that in the literature, the preferred norm is sometimes
$\|f\|_\infty + \|f\|_L$ which turns $\Lip(M)$ into a Banach algebra. These two norms are of course equivalent so the choice does not matter for our purposes. 
The pros and cons of each norm are discussed at page 35 in \cite{Weaver2}.

Next, let $0_M \in M$ be a distinguished point of $M$. We let $\Lip_0(M,\K) = \Lip_0(M)$ be the $\K$-vector space of Lipschitz maps from $M$ to $\K$ vanishing at $0_M$. Equipped with the norm $\| \cdot \|_L$,
$\Lip_0(M)$ is a Banach space. It is sometimes called the \textit{pointed Lipschitz space} over $M$. It is important to recognize that $\Lip_0$ spaces are actually more general than $\Lip$ spaces. Indeed, every $\Lip$ space is in fact, in every meaningful respect, a $\Lip_0$ space (see Section~\ref{Section1.2} for more details). For this reason, we will mainly focus on $\Lip_0$ spaces, and then derive the corresponding results for $\Lip$ spaces as simple corollaries.

Now let us define the objects we will study in this paper. Let $f : M \to N$ be a function between two metric spaces and let $w : M \to \mathbb{K}$ be any map (\textit{``the weight''}). Then the \textit{weighed composition operator} $wC_f : \Lip_0(N) \to \K^M$ is defined by
$$ \forall g \in \Lip_0(N), \forall x \in M, \quad  wC_f(g)(x) = w(x) g \circ f(x).$$
A complete description of bounded weighted composition operators from $\Lip_0(N)$ to $\Lip_0(M)$ has recently been given in \cite{GolMa22} (see also \cite{DA} in the case when $M$ is bounded). 
In the same paper \cite{GolMa22}, under the hypothesis that $f(M)$ it totally bounded in $N$, the authors provide a necessary and sufficient condition for such operators to be compact (see also \cite{DA} for a similar result with the additional assumption $w \in \Lip(M)$). Finally, injectivity and surjectivity of such operators are characterized in \cite{DA} (see also \cite{GolMa16} for compact $M$), while weak compactness is considered in \cite{Gol18}. In the latter paper, $M$ is assumed to be a compact metric space such that the little Lipschitz space has the uniform separation property (these metric spaces are characterized in \cite[Theorem~A]{AGPP22}). The present paper revolves around these properties from a different angle. Indeed, in the aforementioned articles, the fact that $\Lip_0(M)$ is a dual space is implicitly used at various places (especially for the results dealing with compactness). We intend to use this property in a more obvious manner.
\smallskip

The space $\Lip_0(M)$ has indeed a natural predual which we describe now. For $x\in M$, we let $\delta(x) \in \Lip_0(M)^*$ be the evaluation functional defined by $\<\delta(x) , f \> = f(x), \ \forall f\in \Lip_0(M).$ It is readily seen that $M$ is isometric to a subset of $\Lip_0(M)^*$ via the isometry $\delta : M \to \Lip_0(M)^*$. The Lipschitz free space over $M$ is the Banach space
    $$\F(M) := \overline{ \mbox{span}}^{\| \cdot  \|} \delta(M) = \overline{ \mbox{span}}^{\| \cdot  \|}\left \{ \delta(x) \, : \, x \in M  \right \} \subset \Lip_0(M)^*.$$
Then, one can check that $\F(M)^*$ is isometrically isomorphic to $\Lip_0(M)$ via the duality brackets
$$
\left\langle g, \delta(x) \right\rangle = g(x), \ \forall x\in M, \ \forall  g\in \Lip_0(M).
$$
Lipschitz free spaces have been studied for a couple decades, and keep attracting a lot of attention. We refer to \cite{GK, Weaver2} for general information. In the literature, they are mostly defined and studied for the scalar field $\K = \R$, but we will explain in Section~\ref{Section2} that there is no real reasons not to consider the complex scalars case. 

\noindent \begin{minipage}{0.60\linewidth}
    The cornerstone of our study is the next fundamental extension property: if $f : M\to N$ is a Lipschitz map between two pointed metric spaces, there exists a unique (bounded) linear operator $\widehat{f} : \F(M) \to \F(N)$ such that $\widehat{f} \circ \delta_M = \delta_N \circ f$, and moreover $\|\widehat{f}\| = \|f\|_L$. 
\end{minipage}
\begin{minipage}{0.05\linewidth}
    ~
\end{minipage}
\begin{minipage}{0.25\linewidth}
    $$\xymatrix{
		M \ar[r]^f \ar[d]_{\delta_{M}}  & N \ar[d]^{\delta_{N}} \\
		\F(M) \ar[r]_{\widehat{f}} & \F(N).
	}$$
\end{minipage}

\noindent Adapting the definition of $\widehat{f}$ above, one can define a linear map $w\widehat{f}  : \mbox{span} \left( \delta(M) \right) \to \F(N)$, by setting
$$
\ w\widehat{f}\Big(\sum_{i=1}^n a_i\delta(x_i)\Big) = \sum_{i=1}^n a_i w(x_i) \delta(f(x_i)).
$$
When this map is bounded, it has a unique extension still denoted by
$
w\widehat{f}  : \F(M) \to \F(N)
$. Such a bounded operator is named \textit{weighted Lipschitz operator}. It turns out that the adjoint of $w\widehat{f}$ corresponds exactly to the weighted composition operator $wC_f : \Lip_0(N) \to \Lip_0(M)$. Indeed, one has for every $g \in \Lip_0(N)$ and $x \in M$:
$$ \<(w\widehat{f})^*(g) , \delta(x) \> = \< g , w\widehat{f}(\delta(x)) \> = \< g , w(x) \delta(f(x))\> = w(x) g(f(x)) = wC_f(g)(x).$$
Thus $(w\widehat{f})^* = wC_f$ and therefore we may use the general facts which connect the properties of an operator to the properties of its adjoint. For completeness, let us recall some classical relations that we will use in the sequel: if $T:X \to Y$ is a bounded operator between Banach spaces then
\begin{enumerate}[$(i)$]
    \item $T^*$ is injective  if and only if $T$ has dense range.
    \item $T^*$ is surjective if and only if $T$ is an isomorphism onto $T(X)$. 
    \item $T^*$ is compact  if and only if $T$ is compact.
    \item $T^*$ is weakly compact  if and only if $T$ is  weakly compact.
\end{enumerate}
The first two items can be found in \cite{Fabian} (Exercises 2.36 and 2.39 at page 58). Assertion $(iii)$ is Schauder's theorem (see e.g. \cite[Theorem~3.4.15]{Megg}) and 
$(iv)$ is Gantmacher's theorem (see e.g. \cite[Theorem~3.5.13]{Megg}). 
\smallskip

Hence, in this paper we shall study $w\widehat{f}$ and deduce the desired properties of $wC_f$ as simple consequences. We obtain in this way some refinements of the results mentioned above by dropping as many assumptions as possible on $M$ and $w$. At this point, we wish to point out that this strategy was followed in \cite{ACP21} for the simpler case $w=1$. It allowed the authors to remove the assumptions of separability and boundedness of $M$ in the characterisation of compactness obtained in \cite{Vargas1}. 
\smallskip

We now describe the content of this paper. We provide in Section~\ref{Section2} a detailed introduction to Lipschitz free spaces. We emphasize the case of complex scalars which is generally not considered in the literature. Notably, the notion of support is extended to this complex version of Lipschitz free spaces.  In Section~\ref{bounded}, we give several necessary and sufficient conditions for $w\widehat{f}$ to be bounded (see Theorem~\ref{Poids}). In particular, we retrieve the conditions from \cite[Theorem 2.1]{GolMa22}.  We also provide a characterisation of injectivity and surjectivity of the weighted composition operators $wC_f$ in Proposition~\ref{injectivity} and Proposition~\ref{surjectivity}, respectively.  
Section~\ref{sectioncompact} deals with compactness and weak compactness of weighted Lipschitz operators and weighted composition operators. A key point is the equivalent wording provided in Proposition~\ref{caracCompactWeight}, which in turns is an adaptation of \cite[Theorem 2.3]{Vargas2}. Another important ingredient is Theorem~\ref{ThmNormToWeak} which allows us to prove that weak compactness is actually equivalent to (norm) compactness for the class of operators we consider in this article.
The latter result in an improvement of \cite[Theorem B]{ACP21} where $w=1$,  and \cite[Theorem 2.1]{Gol18} where $M$ is a compact purely 1-unrectifiable metric space. The two key elements mentioned above permit us to deduce a new and general characterization of (weak) compactness for weighted Lipschitz/composition operators. Unfortunately, it is tedious to obtain a satisfactory statement without further assumptions, so we postpone this result to the appendix (see Theorem~\ref{CaracCompactgeneral}). Finally, in subsections~\ref{sectionsuffcompac} and \ref{sectioncompacw=1}, we focus on compactness but this time with stronger assumptions on either $M$, $f : M \to N$ or $w: M \to \K$. The outcome is that we are able to provide nicer statements than Theorem~\ref{CaracCompactgeneral}. Importantly, we retrieve the main results from \cite{GolMa22} and \cite{DA}.

\bigskip

\subsection{Notation and background.}~

If $F$ is a finite set, we denote by $|F|$ its cardinality. If $X$ is a Banach space, we let $X^*$ be its topological dual space and $B_X$ (resp. $S_X$) be its closed unit ball (resp. its unit sphere). Next, $\aco (S)$ denotes the absolute convex hull of a subset $S \subset X$. If $S$ is a subset of $X$, its annihilator is defined by
$$S^{\perp}:=\{x^\ast\in X^\ast:\, \langle x^\ast,x \rangle=0,\, \forall x\in S\}.$$
Note that $S^{\perp}$ is a closed subspace of $X^\ast$. Moreover, for a subset $S$ of $X^\ast$,
$$S_{\perp}:=\{x\in X:\, \langle x^\ast,x \rangle=0,\, \forall x^\ast\in S\}$$
is a closed subspace of $X$.
For another Banach space $Y$, $\mathcal{L}(X,Y)$ stands for the space of bounded linear operators $T : X\to Y$. If $X=Y$, we simply write $\mathcal{L}(X)$. As usual, $X \pten Y$ is the projective tensor product of the Banach spaces $X$ and $Y$, that is, the completion of $X \otimes Y$ equipped with the norm
$$
\|z\|_{\pi} := \inf \left\lbrace  \sum \|x_i\| \|y_i\| \right\rbrace ,
$$
where the infimum runs over all finite families $(x_i)_i$ in $X$ and $(y_i)_i$ in $Y$ such that $z = \sum_i x_i \otimes y_i$. We recall the following isometric identification of its dual space:
$$
(X \pten Y)^* = \mathcal{L}(X,Y^*).
$$
The isometry is given by the linear map taking any functional $u : X\otimes Y \to \mathbb{K}$ to the operator $\varphi_u : X \to Y^*$ defined for every $x \in X$ and $y\in Y$ by $\left\langle \varphi_u(x),y \right\rangle = u(x \otimes y)$. See for instance for more details. 
\medskip

Throughout the paper, $M$ and $N$ will be pointed metric spaces and the distinguished points will be denoted by $0_M$ and $0_N$ respectively, or simply $0$ if there is no ambiguity.
For future reference, let us state here a lemma which is a simple consequence of McShane extension's theorem, see e.g. \cite[Theorem 1.33 and Corollary 1.34]{Weaver2}. In fact, a concrete formula can be given to extend a Lipschitz map between metric spaces. This result allows us to separate two or more points of $M$ by an element of $\text{Lip}_0(M,\R)$ (therefore also an element of $\text{Lip}_0(M,\C)$).

\begin{lemma}
	\label{LemmaConstrctionLipF}
	Let $M$ be a pointed metric space, let $p\in M\setminus \{0_M\}$ and let $\ep \in (0, d(p,0_M)/4)$. Then there exists $f \in \text{Lip}_0(M,\R)$ such that $f = 1$ on $B(p, \ep)$ and $f=0$ on $M \setminus B(p, 2 \ep).$
\end{lemma}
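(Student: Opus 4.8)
The plan is to write down an explicit Lipschitz bump function rather than appeal to an abstract extension argument, since the desired $f$ can be produced by hand. First I would record the elementary observation that the hypothesis $\varepsilon < d(p,0_M)/4$ forces $d(0_M,p) > 4\varepsilon > 2\varepsilon$, so that $0_M \notin B(p,2\varepsilon)$; this is exactly what will guarantee that the function we build vanishes at the base point, and hence lies in $\text{Lip}_0(M,\R)$ rather than merely in $\Lip(M,\R)$.

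Next I would define
$$f(x) = \max\!\left(0,\ \min\!\left(1,\ \frac{2\varepsilon - d(x,p)}{\varepsilon}\right)\right), \qquad x \in M.$$
The verification of the two prescribed values is immediate: if $d(x,p) \le \varepsilon$ then $(2\varepsilon - d(x,p))/\varepsilon \ge 1$, so $f(x)=1$, giving $f \equiv 1$ on $B(p,\varepsilon)$; and if $d(x,p) \ge 2\varepsilon$ then $(2\varepsilon - d(x,p))/\varepsilon \le 0$, so $f(x)=0$, giving $f \equiv 0$ on $M\setminus B(p,2\varepsilon)$. In particular $f(0_M)=0$ by the first step. For the Lipschitz estimate I would use that $x \mapsto d(x,p)$ is $1$-Lipschitz, that dividing by $\varepsilon$ produces a $\tfrac1\varepsilon$-Lipschitz function, and that both truncations $t \mapsto \min(1,t)$ and $t\mapsto\max(0,t)$ are $1$-Lipschitz on $\R$; composing these, $f$ is $\tfrac1\varepsilon$-Lipschitz, so $f \in \text{Lip}_0(M,\R)$ with $\|f\|_L \le 1/\varepsilon$.

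There is no serious obstacle here: the whole statement reduces to the explicit formula together with the base-point observation, and the only point that genuinely uses a hypothesis is checking $f(0_M)=0$, which is why one needs $\varepsilon$ small relative to $d(p,0_M)$. As an alternative that matches the reference to McShane's theorem, one could instead define $f$ to equal $1$ on $B(p,\varepsilon)$ and $0$ on $M\setminus B(p,2\varepsilon)$, check directly that this partial function is $\tfrac1\varepsilon$-Lipschitz on its domain (the only nontrivial pairs are $x\in B(p,\varepsilon)$ and $y\notin B(p,2\varepsilon)$, for which $|f(x)-f(y)|=1$ while $d(x,y) \ge d(y,p)-d(x,p) > 2\varepsilon-\varepsilon = \varepsilon$), and then invoke McShane extension to produce a function on all of $M$ with the same Lipschitz constant, retaining the value $0$ at $0_M \in M\setminus B(p,2\varepsilon)$.
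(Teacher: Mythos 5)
Your proof is correct and matches the route the paper itself indicates: the lemma is stated there as a consequence of McShane's extension theorem with the remark that ``a concrete formula can be given,'' and your explicit bump $f(x)=\max\bigl(0,\min\bigl(1,(2\ep-d(x,p))/\ep\bigr)\bigr)$ is exactly such a formula, with the base-point check $d(0_M,p)>4\ep>2\ep$ and the $1/\ep$-Lipschitz estimate both verified correctly. Your alternative sketch via McShane is likewise the argument the paper alludes to, so there is nothing to add.
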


Finally, if $w : M \to \K$ is a weight map, then $\text{coz}(w) := \{x \in M \; : \; w(x) \neq 0\}$.

\subsection{\texorpdfstring{Lip versus Lip$_\mathbf{0}$}{Lip versus Lip0}} \label{Section1.2}
This short section is based on Section~2.2 in \cite{Weaver2}. We include the main highlights for convenience of the reader. 

\begin{itemize}[leftmargin =*, itemsep = 5pt]
    \item First of all, let $(M,d)$ be any metric space. Then one can define a new metric by setting $\rho=\min(d,2)$. This turns $M$ into a bounded metric space with $\diam(M) :=\sup\{\rho(x,y) \; : \; x \neq y \in M\} \leq 2$. It is then rather easy to see that $\Lip(M,d)$ and $\Lip(M, \rho)$ are the same spaces, meaning that they are actually the same sets of functions with the same norms (for more details, see \cite[Proposition~2.12]{Weaver2}). Therefore, when working with $\Lip(M,d)$, we may assume that $\diam(M) \leq 2$.
    \item Now if $(M,\rho)$ is any metric space of diameter less than 2, then we artificially add a point $e$ to $M$: $M^e:=M \cup\{e\}$. This point plays the role of the distinguished point in $M$. We then extend $\rho$ to a metric $d^e$ on $M^e\times M^e$ by setting $d^e(x,e)=1$ for every $x\in M$. Now $\Lip(M)$ is naturally identified with $\Lip_0(M^e)$ (see \cite[Proposition 2.13]{Weaver2}). The isometry is given by $g \in \Lip(M) \mapsto g^e \in \Lip_0(M^e)$ where $g^e\restricted_M = g$ and $g^e(e)=0$.

\end{itemize}
As a result, every $\Lip$ space is in fact, in every meaningful respect, a $\Lip_0$ space. 
This fact will allow us to obtain, for free, results for operators between Lip spaces using the corresponding results for operators between Lip$_0$ spaces. For future reference, in the following numbered proposition we combine the previous elements and apply them to weighted composition operators.

\begin{proposition} \label{Lip0toLip}
    For metric spaces $M,N$ we let $M^e = M \cup\{e\}$ and $N^e = N \cup \{e\}$ be the metric spaces as constructed above. Let us fix some maps $w : M \to \K$ and $f : M \to N$. We extend $f$ and $w$ by defining $f(e)=e$ and $w(e)=0$, and we denote $f^e : M^e \to N^e$ and $w^e : M^e \to \K$ these extensions. Then $wC_f$ is a bounded operator from $ \Lip(N)$ to $\Lip(M)$ if and only if $w^eC_{f^e}$ is a bounded operator from $\Lip_0(N^e)$ to $\Lip_0(M^e)$. In such a case these operators are conjugated and have the same norm. In particular, $wC_f$ is compact if and only if $w^eC_{f^e}$ is compact.
\end{proposition}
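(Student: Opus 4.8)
The plan is to realise $w^e C_{f^e}$ as the conjugate of $wC_f$ under the canonical isometries $\Lip \cong \Lip_0$ of the extended spaces recalled in Section~\ref{Section1.2}, and then to invoke the elementary fact that conjugation by surjective linear isometries preserves well-definedness, boundedness (with the same norm) and compactness. Concretely, write $J_N : \Lip(N) \to \Lip_0(N^e)$ and $J_M : \Lip(M) \to \Lip_0(M^e)$ for the isometric isomorphisms of \cite[Proposition 2.13]{Weaver2}, given by $h \mapsto h^e$ (extension by $h^e(e)=0$). Since every element of $\Lip_0(N^e)$ vanishes at $e$ and restricts on $N$ to an element of $\Lip(N)$, the map $J_N$ is onto, and likewise $J_M$; this surjectivity is what will turn one implication into an equivalence.

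The core of the argument is the pointwise identity
$$ w^e C_{f^e}(g^e) = (wC_f g)^e \qquad (g \in \Lip(N)), $$
where for a scalar function $h$ on $M$ we denote by $h^e$ its extension to $M^e$ with $h^e(e)=0$. I would verify this by splitting into the two cases dictated by $M^e = M \cup \{e\}$. For $x \in M$ one has $w^e(x)=w(x)$ and $f^e(x)=f(x) \in N$, so that $g^e(f^e(x)) = g(f(x))$, and hence $w^e C_{f^e}(g^e)(x) = w(x)\,g(f(x)) = (wC_f g)(x)$. For $x=e$ the relation $w^e(e)=0$ forces $w^e C_{f^e}(g^e)(e)=0=(wC_f g)^e(e)$. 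This identity says exactly that $w^e C_{f^e}$ and $wC_f$ are intertwined by the isometries, i.e. $w^e C_{f^e}\circ J_N = J_M \circ wC_f$ on the level of scalar functions.

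From this intertwining everything follows. Using that $J_N$ is onto, an arbitrary $h \in \Lip_0(N^e)$ equals $g^e$ for $g=J_N^{-1}(h) \in \Lip(N)$, whence $w^e C_{f^e}(h) = (wC_f g)^e$; thus $w^e C_{f^e}$ maps into $\Lip_0(M^e)$ precisely when $wC_f$ maps into $\Lip(M)$ (apply $J_M$, resp. $J_M^{-1}$), which settles the equivalence of well-definedness. Moreover, since $J_M$ and $J_N^{-1}$ are isometries, for $\norm{h}_{\Lip_0(N^e)} \leq 1$ we get $\norm{w^e C_{f^e}(h)}_{\Lip_0(M^e)} = \norm{wC_f(J_N^{-1}h)}_{\Lip(M)}$ with $\norm{J_N^{-1}h}_{\Lip(N)} \leq 1$, so taking suprema yields $\norm{w^e C_{f^e}} = \norm{wC_f}$ as elements of $[0,\infty]$; in particular one operator is bounded iff the other is, with equal norm, and they are conjugate via $w^e C_{f^e} = J_M \circ wC_f \circ J_N^{-1}$. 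Finally, compactness transfers both ways: writing $w^e C_{f^e} = J_M \circ wC_f \circ J_N^{-1}$ (resp. $wC_f = J_M^{-1} \circ w^e C_{f^e} \circ J_N$) exhibits each as a composition of a compact operator with bounded maps, hence compact. There is no serious obstacle here; the only points demanding care are the bookkeeping of the two base points $0_M,0_N$ versus the artificial point $e$, and the use of surjectivity of $J_N$ to promote the boundedness implication to an equivalence — the norm identity $\norm{g}_{\Lip(M)} = \norm{g^e}_L$ underlying it being exactly the content of \cite[Proposition 2.13]{Weaver2}.
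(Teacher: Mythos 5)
Your proof is correct and follows exactly the route the paper intends: Proposition~\ref{Lip0toLip} is stated as a direct combination of the identifications $\Lip(M)\cong\Lip_0(M^e)$ and $\Lip(N)\cong\Lip_0(N^e)$ from Section~\ref{Section1.2}, and your intertwining identity $w^e C_{f^e}\circ J_N = J_M\circ wC_f$ together with the transfer of boundedness, norm and compactness under surjective isometries is precisely that argument, carried out with the right care at the point $e$.
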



\section{On the complex version of Lipschitz free spaces.}\label{Section2}

\subsection{Definition}

Let $(M,d)$ be a pointed metric space and $\K = \R$ or $\C$. If $X$ is Banach space over $\K$ then $\Lip_0(M, X)$ stands for the $\K$-Banach space of Lipschitz maps from $M$ to $X$ vanishing at $0_M$, equipped with the norm:
$$\displaystyle
\forall f \in \Lip_0(M,X), \quad \|f\|_L :=  \sup_{x \neq y \in M} \frac{\|f(x)-f(y)\|_X}{d(x,y)}.$$ 
Notice that $\|f\|_L$ is simply the best Lipschitz constant of $f$. Recall that for $x\in M$, we let $\delta_{\K}(x) \in \Lip_0(M,\K)^*$ be the evaluation functional defined by $\<\delta_{\K}(x) , f \> = f(x), \ \forall f\in \Lip_0(M, \K).$ The map $\delta_{\K} : M \to \F(M,\K)$ is an isometry. 
\begin{itemize}
    \item The Lipschitz free space over $M$ is the Banach space
    $$\F(M,\R) := \overline{ \mbox{span}}^{\| \cdot  \|}\left \{ \delta_{\R}(x) \, : \, x \in M  \right \} \subset \Lip_0(M,\R)^*.$$
    \item In a similar way, the complex version of the Lipschitz free space over $M$ is defined by:
    $$\F(M,\C) := \overline{ \mbox{span}}^{\| \cdot  \|}\left \{ \delta_{\C}(x) \, : \, x \in M  \right \} \subset \Lip_0(M,\C)^*.$$
\end{itemize}
The next fundamental property is often referred to as the ``universal extension property of Lipschitz free spaces''. 

\begin{theorem} \label{thm1}
Let $M$ be a pointed metric space. Let $X$ be a Banach space over \nolinebreak$\K$.
Then for every $f \in \Lip_{0}(M,X)$, there exists a unique $\overline{f} \in \mathcal{L}(\F(M, \K),X)$ such that:
\begin{enumerate}[label = (\roman*)]
\item $\| \overline{f} \|_{\mathcal{L}(\F(M,\K),X)} = \| f \|_L$
\item the following diagram commutes:

\begin{minipage}{0.4\linewidth}
$$\def\commutatif{\ar@{}[rd]|{\circlearrowleft}}
\xymatrix{
M \ar[r]^f \ar@{^{(}->}[d]_{\delta} & X \\
    \F(M,\K) \ar[ru]_{\overline{f}} 
}
$$
\end{minipage}
\begin{minipage}{0.4\linewidth}
i.e : $f=\overline{f} \circ \delta$.
\end{minipage}
\end{enumerate}

In particular, the next isometric identification holds: 
$$\Lip_0(M,X) \equiv \mathcal{L}(\F(M, \K),X).$$
\end{theorem}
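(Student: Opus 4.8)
The plan is to construct $\overline{f}$ first on the dense subspace $\mathrm{span}\,\delta(M) \subset \F(M,\K)$, control its norm there, and then extend by continuity. Concretely, I would set $\overline{f}\big(\sum_i a_i \delta(x_i)\big) := \sum_i a_i f(x_i) \in X$ on finitely supported combinations. The crux is to establish the single inequality
\[
\Big\| \overline{f}\Big(\sum_i a_i \delta(x_i)\Big)\Big\|_X \le \|f\|_L \, \Big\|\sum_i a_i \delta(x_i)\Big\|_{\F(M,\K)},
\]
which at once guarantees that $\overline{f}$ is well defined on the span (a null combination is sent to $0$) and bounded there by $\|f\|_L$. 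Once this is in place, $\overline{f}$ extends uniquely to a bounded operator on all of $\F(M,\K)$ with $\|\overline{f}\| \le \|f\|_L$, and property $(ii)$ holds by construction since $\overline{f}(\delta(x)) = f(x)$.

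To prove the displayed inequality, which I expect to be the main point, I would reduce to the scalar case by duality in $X$. Fixing $\mu = \sum_i a_i \delta(x_i)$ and a functional $\phi \in B_{X^*}$, the composition $\phi \circ f : M \to \K$ is Lipschitz with $\|\phi \circ f\|_L \le \|\phi\|\,\|f\|_L \le \|f\|_L$ and vanishes at $0_M$, so $\phi \circ f \in \Lip_0(M,\K)$ and hence pairs against $\F(M,\K)$ with norm at most $\|f\|_L$. A direct computation gives $\langle \phi, \overline{f}(\mu)\rangle = \sum_i a_i (\phi\circ f)(x_i) = \langle \phi\circ f, \mu\rangle$, whence $|\langle \phi, \overline{f}(\mu)\rangle| \le \|\phi\circ f\|_L\,\|\mu\| \le \|f\|_L\,\|\mu\|$. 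Taking the supremum over $\phi \in B_{X^*}$ and invoking the Hahn--Banach description of $\|\cdot\|_X$ yields the claim; note that this argument is insensitive to whether $\K = \R$ or $\C$, which is exactly what is needed for the complex version of the free space.

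For the reverse norm estimate I would use that $\delta$ is an isometry, so $\|\delta(x)-\delta(y)\|_{\F(M,\K)} = d(x,y)$; then $\|f(x)-f(y)\|_X = \|\overline{f}(\delta(x)-\delta(y))\|_X \le \|\overline{f}\|\, d(x,y)$ gives $\|f\|_L \le \|\overline{f}\|$, so equality holds in $(i)$. Uniqueness is immediate, since any bounded $T$ with $T\circ\delta = f$ agrees with $\overline{f}$ on the dense set $\mathrm{span}\,\delta(M)$. Finally, for the isometric identification I would check that $f \mapsto \overline{f}$ is linear (it is linear on the span, hence everywhere by uniqueness) and isometric by $(i)$, and that it is surjective by sending $T \in \mathcal{L}(\F(M,\K),X)$ to $f := T\circ\delta$, which lies in $\Lip_0(M,X)$ and satisfies $\overline{f} = T$ by the uniqueness clause. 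The only genuine difficulty is the boundedness inequality of the second paragraph; every remaining step is a routine density, uniqueness, or linearity verification.
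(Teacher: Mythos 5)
Your proof is correct; note that the paper states this classical ``universal extension property'' without proof, and your argument (define $\overline{f}$ on $\mathrm{span}\,\delta(M)$, get boundedness with constant $\|f\|_L$ by testing against $\phi\circ f$ for $\phi\in B_{X^*}$ via Hahn--Banach, extend by density, and recover $\|f\|_L\le\|\overline{f}\|$ from the isometry $\delta$) is exactly the standard one found in the references the paper cites. All the auxiliary facts you invoke (that $\delta$ is an isometry, that the inequality simultaneously yields well-definedness on the span, and that uniqueness plus $T\mapsto T\circ\delta$ gives the isometric identification) are handled correctly.
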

A direct application (in the case $X = \K$) of the previous theorem provides another basic yet important information: 
$$\Lip_0(M,\K) \equiv \F(M, \K)^*.$$
Also, the weak$^*$ topology induced by $\F(M,\K)$ on $\Lip_0(M,\K)$ coincides with the topology of pointwise convergence on norm-bounded subsets of $\Lip_0(M, \K)$.
At this point we should mention that, contrary to the real case, the complex version of Lipschitz free spaces has not been considered very often in the recent literature. In fact, complex Lipschitz free spaces were studied in a systematical way in the first edition of the book of Weaver \cite{Weaver1} (where they are called Arens-Eells spaces). The second edition of the same book seems more interested by the real case (see the comments at pages 86 and 125 in \cite{Weaver2}). More recently, the complex version was defined explicitly as above in \cite{AACD20} where it is proved that for every metric space $M$, there exists a bounded metric space $B_M \subset \F(M,\K)$ such that $\F(M,\K)$ is isomorphic to $\F(B_M,\K)$. 
The next proposition is another easy consequence of Theorem~\ref{thm1}. It is generally cited as the ``linearization property of Lipschitz free spaces''.

\begin{proposition} \label{diagramfree}
	Let $M$ and $N$ be two pointed metric spaces. Let $f \colon M \to N$ be a Lipschitz map such that $f(0_M) = 0_N$. Then, there exists a unique bounded linear operator $\widehat{f} \colon \F(M,\K) \to \F(N,\K)$ with $\|\widehat{f}\|=\mathrm{Lip}(f)$ and such that the following diagram commutes:
	$$\xymatrix{
		M \ar[r]^f \ar[d]_{\delta_{M}}  & N \ar[d]^{\delta_{N}} \\
		\F(M, \K) \ar[r]_{\widehat{f}} & \F(N,\K).
	}$$
	More precisely, for every $\gamma=\sum_{i=1}^n a_i\delta(x_i)\in \F(M,\K)$,  
	$\widehat{f}(\gamma)=\sum_{i=1}^n a_i\delta(f(x_i))$.
\end{proposition}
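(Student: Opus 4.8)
The plan is to deduce this directly from the universal extension property, Theorem~\ref{thm1}, applied with the target Banach space chosen to be $X = \F(N,\K)$ itself. The whole point is that linearizing $f$ amounts to extending one particular $\F(N,\K)$-valued Lipschitz map.

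First I would introduce the map $\delta_N \circ f \colon M \to \F(N,\K)$ and check that it is a legitimate input for Theorem~\ref{thm1}. Since $f(0_M) = 0_N$ and the evaluation functional at the base point satisfies $\delta_N(0_N) = 0$, this map sends $0_M$ to $0$. Moreover, because $\delta_N \colon N \to \F(N,\K)$ is an isometry, for all $x,y \in M$ one has
$$
\|\delta_N(f(x)) - \delta_N(f(y))\|_{\F(N,\K)} = d_N(f(x),f(y)) \leq \mathrm{Lip}(f)\, d_M(x,y),
$$
and passing to the supremum of the difference quotients gives $\|\delta_N \circ f\|_L = \mathrm{Lip}(f)$. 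Thus $\delta_N \circ f \in \Lip_0(M,\F(N,\K))$, with Lipschitz constant exactly $\mathrm{Lip}(f)$.

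Next I would feed this map into Theorem~\ref{thm1} with $X = \F(N,\K)$. This yields a unique operator $\widehat{f} := \overline{\delta_N \circ f} \in \mathcal{L}(\F(M,\K),\F(N,\K))$ satisfying $\|\widehat{f}\| = \|\delta_N \circ f\|_L = \mathrm{Lip}(f)$ together with $\widehat{f} \circ \delta_M = \delta_N \circ f$. The second identity is precisely the commutativity of the square in the statement, and the first is the claimed norm equality. Uniqueness is inherited verbatim from the uniqueness clause of Theorem~\ref{thm1}; alternatively, any bounded operator making the diagram commute is prescribed on $\delta(M)$, hence on the dense subspace $\mathrm{span}(\delta(M))$, and therefore everywhere by continuity.

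Finally, the explicit formula is read off from linearity and the commutation relation: for $\gamma = \sum_{i=1}^n a_i \delta(x_i)$ one computes $\widehat{f}(\gamma) = \sum_{i=1}^n a_i \widehat{f}(\delta(x_i)) = \sum_{i=1}^n a_i \delta(f(x_i))$, using $\widehat{f} \circ \delta_M = \delta_N \circ f$ termwise. I do not expect any genuine obstacle here, as the result is essentially a reinterpretation of Theorem~\ref{thm1}. The one point deserving a moment's care is the computation $\|\delta_N \circ f\|_L = \mathrm{Lip}(f)$: it is the \emph{isometric} (not merely contractive) nature of $\delta_N$ that upgrades the estimate to an equality, and this is exactly what delivers the sharp identity $\|\widehat{f}\| = \mathrm{Lip}(f)$ rather than just $\|\widehat{f}\| \leq \mathrm{Lip}(f)$.
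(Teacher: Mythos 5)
Your proposal is correct and follows exactly the route the paper intends: the paper gives no separate proof, stating only that the proposition is an easy consequence of Theorem~\ref{thm1}, and your argument (applying that theorem with $X=\F(N,\K)$ to the map $\delta_N\circ f$ and using that $\delta_N$ is an isometry to get the exact norm equality) is the standard way to make that deduction explicit.
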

Operators of the kind $\widehat{f} \colon \F(M,\K) \to \F(N,\K)$ will be called Lipschitz operators in this paper.

\subsection{The real case versus the complex case} \label{sectionFC}

Even if the definition of $\F(M,\C)$ is alike to that of $\F(M,\R)$, one should be careful since some features of $\F(M , \R)$ do not work equally well for $\F(M,\C)$. For instance, it is a well known fact that if $K \subset M$ contains the base point, then $\F(K,\R)$ is isometrically isomorphic to the subspace 
$$\F_M(K,\R):=\overline{\mbox{span}}^{\| \cdot  \|}\left \{ \delta_{\R}(x) \, : \, x \in K  \right \}$$
of $\F(M,\R)$. This is mainly due to the fact that every Lipschitz map defined on $K$ can be extended to $M$ without increasing its Lipschitz constant; this is the well-known McShane extension's theorem (see e.g. \cite[Theorem 1.33]{Weaver2}). In the case of complex scalars, an increasing of the Lipschitz constant by a factor $\frac{4}{\pi}$ is unavoidable; see \cite[Theorem 5.15]{Weaver2}. Therefore, $\F(K,\C)$ is only $\frac{4}{\pi}$-isomorphic to the subspace $\F_M(K,\C):=\overline{\mbox{span}}^{\| \cdot  \|}\left \{ \delta_{\C}(x) \, : \, x \in K  \right \}$. On the other hand, $\F(M,\R)$ embeds isometrically into $\F(M,\C)$ in a very natural way:

\begin{proposition} \label{RemarkValueRC}
	For every $(a_i)_{i=1}^n \in \R^n$ and every $(x_i)_{i=1}^n \in M^n$:
	$$\Big\|\sum_{i=1}^n a_i \delta_\R(x_i) \Big\|_{\F(M,\R)} =  \Big\|\sum_{i=1}^n a_i \delta_\C(x_i) \Big\|_{\F(M,\C).}$$
	Consequently,  the map $I : \sum_{i=1}^n a_i \delta_\R(x_i)  \mapsto \sum_{i=1}^n a_i \delta_\C(x_i)$ extends to a $\R$-linear isometry from $\F(M,\R)$ onto $\overline{\lspan}_{\R}(\delta_\C(M))\subset \F(M , \C)$. 
\end{proposition}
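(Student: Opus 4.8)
The plan is to compute both free-space norms by duality and then reduce the complex side to the real side via a rotation-and-real-part argument. Since $\F(M,\K)$ is by definition a closed subspace of $\Lip_0(M,\K)^*$, carrying the induced dual norm, and since $\duality{\delta_\K(x), h} = h(x)$, for $\gamma = \sum_{i=1}^n a_i \delta_\K(x_i)$ one has
$$\norm{\gamma}_{\F(M,\K)} = \sup\set{\abs{\sum_{i=1}^n a_i\, h(x_i)} : h \in \Lip_0(M,\K),\ \norm{h}_L \le 1}.$$
Write $R$ and $C$ for the right-hand side when $\K = \R$ and $\K = \C$ respectively, the real coefficients $a_i$ and points $x_i$ being fixed throughout. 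The displayed identity is then exactly the equality $R = C$.

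First I would establish $R \le C$, which is immediate: any $f \in \Lip_0(M,\R)$ with $\norm{f}_L \le 1$ may be regarded as an element of $\Lip_0(M,\C)$ of the same Lipschitz constant, so every competitor in the supremum defining $R$ is also a competitor for $C$.

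The reverse inequality $C \le R$ is the crux, and it is precisely here that the hypothesis $a_i \in \R$ is used. Fix $g \in \Lip_0(M,\C)$ with $\norm{g}_L \le 1$ and choose $\theta \in \R$ so that $\abs{\sum_i a_i g(x_i)} = e^{i\theta}\sum_i a_i g(x_i)$. Set $h := \re(e^{i\theta} g)$. Then $h$ is real-valued with $h(0_M) = 0$, and since taking real parts is $1$-Lipschitz on $\K$ (because $\abs{\re z - \re w} = \abs{\re(z-w)} \le \abs{z-w}$) one has $\norm{h}_L \le \norm{e^{i\theta}g}_L = \norm{g}_L \le 1$; thus $h$ belongs to the unit ball of $\Lip_0(M,\R)$. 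As the $a_i$ are real,
$$\abs{\sum_i a_i g(x_i)} = \re\Big(e^{i\theta}\sum_i a_i g(x_i)\Big) = \sum_i a_i\, \re\big(e^{i\theta}g(x_i)\big) = \sum_i a_i h(x_i) \le R.$$
Taking the supremum over all such $g$ yields $C \le R$, whence $R = C$ and the norm identity follows.

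Finally, for the consequence I would argue that the norm identity forces the formal map $I_0 : \sum_i a_i \delta_\R(x_i) \mapsto \sum_i a_i \delta_\C(x_i)$ to be well defined and isometric on $\lspan_\R(\delta_\R(M))$: if two representatives define the same element of $\F(M,\R)$, their difference has zero norm there, hence by the identity its image has zero norm in $\F(M,\C)$ and therefore vanishes. Thus $I_0$ is an $\R$-linear isometry from the dense subspace $\lspan_\R(\delta_\R(M))$ of $\F(M,\R)$ onto the dense subspace $\lspan_\R(\delta_\C(M))$ of $\overline{\lspan}_\R(\delta_\C(M))$, so it extends uniquely to the desired $\R$-linear isometry $I$ of $\F(M,\R)$ onto $\overline{\lspan}_\R(\delta_\C(M))$. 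The only genuinely delicate point is the complex-to-real reduction above; everything else is soft functional analysis.
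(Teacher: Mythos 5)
Your proof is correct and follows essentially the same route as the paper: the easy inequality comes from viewing real Lipschitz functions as complex ones, and the reverse inequality from taking real parts of (suitably rotated) complex Lipschitz functions and using that the coefficients $a_i$ are real. The only cosmetic difference is that the paper first picks an exact norming functional via Hahn--Banach and then takes its real part, whereas you rotate each competitor $g$ by $e^{i\theta}$ and pass to the supremum afterwards; both hinge on the same observation that $\re$ is $1$-Lipschitz.
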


\begin{proof}
Let $\gamma= \sum_{i=1}^n a_i \delta_\R(x_i)$.
	One inequality is rather obvious:
	\begin{align*}
		\| \gamma  \|_{\F(M,\R)} &= \sup_{f \in B_{\Lip_0(M,\R)}} \Big|\sum_{i=1}^n a_i f(x_i) \Big| \\
		&\leq \sup_{f \in B_{\Lip_0(M,\C)}} \Big|\sum_{i=1}^n a_i f(x_i) \Big| 
		= \Big\|\sum_{i=1}^n a_i \delta_\C(x_i) \Big\|_{\F(M,\C).}
	\end{align*}
Conversely, using the Hahn-Banach theorem we may pick $f \in B_{\Lip_0(M,\C)}$ such that $\|I\gamma \|_{\F(M,\C)}=\langle f , I\gamma \rangle = \sum_{i=1}^n a_i f(x_i)$.
	Taking the real part in this equality gives
	$$ \|I\gamma \|_{\F(M,\C)} = \sum_{i=1}^n a_i \re(f(x_i)). $$
It is readily seen that the function $g : x \in M \mapsto \re(f(x)) \in \R$ is $1$-Lipschitz
	and 
	$$\|\gamma\|_{\F(M,\R)} \geq  \langle g , \gamma \rangle =  \sum_{i=1}^n a_i g(x_i)) = \|I\gamma\|_{\F(M,\C)}.$$
\end{proof}

Next, we wish to highlight that in the literature on vector-valued Lipschitz free spaces (e.g. \cite{GPR, Rueda}), the notation $\F(M,\C)$ refers to the projective tensor product $\F(M,\R) \pten \C$, where both factors are seen as real Banach spaces. This approach is motivated by the following $\R$-linear isometric identifications:
$$ \Lip_0(M,\C) \equiv \mathcal{L}(\F(M, \R),\C) \equiv (\F(M,\R) \pten \C)^*. $$
We will explain that both point of views are actually equivalent. Indeed, $\F(M,\R) \pten \C$ canonically becomes a complex vector space under the scalar multiplication 
$$ \forall \lambda  \in \C, \; \forall \gamma \otimes z \in  \F(M,\R) \otimes \C , \quad \lambda \cdot (\gamma \otimes z) := \gamma \otimes \lambda z.$$
Moreover the projective norm  $\|\cdot\|_{\pi}$ of $\F(M,\R) \pten \C$ is compatible with this $\C$-vector space structure, that is $(\F(M,\R) \pten \C , \|\cdot\|_{\pi})$ is a Banach space over $\C$. This can be seen in \cite[Lemma~1]{Bochnak} (see also \cite[Proposition~9]{Munoz} in the case of general tensor norms). The only point that requires care is the absolute homogeneity: For every $\gamma \in \F(M,\R) \pten \C$ and $\lambda \in \C^*$,
\begin{align*}
	\| \lambda \gamma\|_\pi &= \inf\left \{ \sum_{n=1}^{+\infty}  |\lambda_n| \,  \|\gamma_n\|_{\F(M,\R)}  \; : \; \lambda \gamma = \sum_{n=1}^{+\infty} \gamma_n \otimes \lambda_n \right\} \\
	&=\inf\left \{ |\lambda| \sum_{n=1}^{+\infty}  |\lambda_n\lambda^{-1}| \,  \|\gamma_n\|_{\F(M,\R)}  \; : \; \gamma = \sum_{n=1}^{+\infty} \gamma_n \otimes \lambda_n\lambda^{-1} \right\} \\
	&= |\lambda| \|\gamma\|_{\pi}.
\end{align*}
The complex Banach space $(\F(M,\R) \pten \C , \|\cdot\|_{\pi})$ described above is called the \textit{Bochner complexification} of $\F(M,\R)$. If follows rather directly from basic tensor product theory that its dual is identified with the injective tensor product $\Lip_0(M,\R) \iten \C$ equipped with the $\C$-vector space structure as described above. The latter $\C$-Banach space is called the \textit{Taylor complexification} of $\Lip_0(M,\R)$, and its norm is given by the formula:
$$\forall f_1 , f_2 \in \Lip_0(M,\R), \quad \|f_1 \otimes 1 + f_2 \otimes i \|_{\ep} = \sup_{\theta \in [0 ,2 \pi]} \|\cos(\theta)f_1 + \sin(\theta) f_2\|_{\Lip_0(M,\R)}.$$

\begin{remark} 
	If one prefers to avoid the use of tensor products, then the above considerations can be paraphrased as follows: $\F(M,\R) \times \F(M,\R)$ becomes a complex normed space when its linear structure and norm are defined for $\gamma_1 , \gamma_2 , \mu_1 , \mu_2 \in \F(M,\R)$ and $a, b \in \R$ by
	\begin{align*}
		(\gamma_1, \gamma_2) + (\mu_1, \mu_2) &:= (\gamma_1 + \mu_1, \gamma_2 + \mu_2) \\
		(a + ib)\cdot (\gamma_1, \gamma_2) &:= (a\gamma_1 - b\gamma_2, b\gamma_1 + a\gamma_2) \\
		\|(\gamma_1, \gamma_2)\|_\pi &:= \sup \big|\langle f_1 , \gamma_1 \rangle + \langle f_2 , \gamma_2 \rangle \big|, 
	\end{align*}
	where the last supremum is taken over all functions $f_1, f_2 \in \Lip_0(M,\R)$ such that 
 $$\|(f_1 , f_2)\|_{\ep} := \sup_{\theta \in [0 ,2 \pi]} \|\cos(\theta)f_1 + \sin(\theta) f_2\|_{\Lip_0(M,\R)} \leq 1.$$ 
 Observe that we clearly have 
	$$\max\{\|\gamma_1\| , \|\gamma_2\|\} \leq \|(\gamma_1, \gamma_2)\|_\pi \leq \|\gamma_1\| + \|\gamma_2\|.$$ 
\end{remark}

The next proposition shows that our definition of $\F(M,\C)$ leads to the same Banach space as the vector-valued approach presented above.

\begin{proposition} \label{pten}
If $M$ is a pointed metric space then $\Lip_0(M,\C) \equiv \Lip_0(M,\R) \iten \C$ and $\F(M,\C) \equiv \F(M,\R) \pten \C$ as $\C$-vector spaces.
\end{proposition}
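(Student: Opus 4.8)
The plan is to prove the two identifications in order, deducing the free space statement from the $\Lip_0$ statement by a pre-adjoint argument. First I would establish the $\C$-linear isometric identification $\Lip_0(M,\C) \equiv \Lip_0(M,\R) \iten \C$ directly. Every $g \in \Lip_0(M,\C)$ decomposes uniquely as $g = f_1 + i f_2$ with $f_1 = \re g$ and $f_2 = \im g$ in $\Lip_0(M,\R)$, which yields an obvious $\C$-linear bijection $g \mapsto f_1 \otimes 1 + f_2 \otimes i$ onto $\Lip_0(M,\R) \otimes_\R \C = \Lip_0(M,\R) \iten \C$ (no completion is needed, $\C$ being finite dimensional over $\R$). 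One checks the complex structures match, since $ig = -f_2 + i f_1$ corresponds to $-f_2 \otimes 1 + f_1 \otimes i = i\cdot(f_1 \otimes 1 + f_2 \otimes i)$. The only substantial point is that the bijection is isometric, and this follows from a single exchange of suprema: using $|a + ib| = \sup_{\theta}(a\cos\theta + b\sin\theta)$ for complex numbers, one gets for $x \neq y$
$$\frac{|g(x) - g(y)|}{d(x,y)} = \sup_{\theta \in [0,2\pi]} \frac{(\cos\theta\, f_1 + \sin\theta\, f_2)(x) - (\cos\theta\, f_1 + \sin\theta\, f_2)(y)}{d(x,y)},$$
and taking the supremum over $x \neq y$ and swapping it with the supremum over $\theta$ identifies $\|g\|_L$ with the Taylor norm $\|f_1 \otimes 1 + f_2 \otimes i\|_\ep$ recalled before the statement.

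Next, to identify the free spaces I would introduce the canonical map $\Phi \colon \F(M,\R) \pten \C \to \F(M,\C)$ defined on elementary tensors by $\Phi(\gamma \otimes z) = z\, I(\gamma)$, where $I \colon \F(M,\R) \to \F(M,\C)$ is the $\R$-linear isometry from Proposition~\ref{RemarkValueRC}; in particular $\Phi(\delta_\R(x) \otimes z) = z\,\delta_\C(x)$. This $\Phi$ is $\C$-linear for the Bochner structure, and since $\|\Phi(\gamma \otimes z)\| = |z|\,\|\gamma\|$ it is a contraction for the projective norm, hence extends to the completion. The crucial computation is that its adjoint $\Phi^* \colon \Lip_0(M,\C) = \F(M,\C)^* \to (\F(M,\R)\pten\C)^* = \Lip_0(M,\R) \iten \C$ coincides with the isometry of the first step: using the complex-bilinear pairing between the Bochner complexification and its Taylor dual, one finds for $g = f_1 + i f_2$ that $\langle \Phi^* g, \delta_\R(x) \otimes z\rangle = \langle g, z\,\delta_\C(x)\rangle = z\, g(x) = \langle f_1 \otimes 1 + f_2 \otimes i, \delta_\R(x) \otimes z\rangle$, whence $\Phi^* g = f_1 \otimes 1 + f_2 \otimes i$.

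I would then conclude as follows. Since $\Phi^*$ is exactly the surjective isometry of the first step, it maps $B_{\Lip_0(M,\C)}$ onto the unit ball of $\Lip_0(M,\R)\iten\C$; therefore for every $u \in \F(M,\R)\pten\C$ we have $\|\Phi u\| = \sup_{g} |\langle \Phi^* g, u\rangle| = \|u\|$, so $\Phi$ is an isometry. Its range is closed (being the isometric image of a complete space) and contains every $\delta_\C(x)$, hence is dense in $\F(M,\C) = \overline{\lspan}_\C \delta_\C(M)$, so $\Phi$ is onto. This yields the $\C$-linear isometric identification $\F(M,\C) \equiv \F(M,\R) \pten \C$.

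I expect the main obstacle to be the bookkeeping of the two coexisting scalar structures, and in particular pinning down the correct (complex-bilinear) form of the duality between the Bochner complexification $\F(M,\R) \pten \C$ and its Taylor dual $\Lip_0(M,\R) \iten \C$: it is precisely this normalization that makes the adjoint computation identify $\Phi^*$ with the first-step isometry. Once that pairing is fixed, both the isometry and surjectivity of $\Phi$ follow routinely from the surjective-isometry property of the adjoint.
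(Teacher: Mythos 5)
Your proof is correct. The first half coincides with the paper's argument: the paper defines the mutually inverse maps $T(f_1\otimes 1+f_2\otimes i)=f_1+if_2$ and $S(f)=\re(f)\otimes 1+\im(f)\otimes i$ and verifies the isometry by exactly the same supremum computation based on $\sqrt{a^2+b^2}=\sup_\theta(a\cos\theta+b\sin\theta)$. Where you diverge is in how the identification is transferred to the preduals. The paper observes that $T$ and $S$ are continuous for pointwise convergence on bounded sets, invokes the Banach--Dieudonn\'e theorem to conclude they are weak$^*$-to-weak$^*$ continuous, and hence that they admit pre-adjoints $T_*$ and $S_*$, which are automatically isometric isomorphisms between $\F(M,\C)$ and $\F(M,\R)\pten\C$. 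You instead build the predual map $\Phi(\gamma\otimes z)=z\,I(\gamma)$ explicitly, check it is a contraction, identify $\Phi^*$ with the first-step isometry by a pairing computation on elementary tensors, and then recover the isometry of $\Phi$ from the surjectivity of $\Phi^*$ together with density of the range. Both routes are sound. The paper's is shorter once Banach--Dieudonn\'e is taken for granted (a tool it reuses elsewhere, e.g.\ in Remark~\ref{RemarkQuotient} and in the proof of Theorem~\ref{Poids}), whereas yours is more self-contained and produces the concrete formula for the isomorphism on elementary tensors directly --- information the paper only records after the proof, when it writes down $S_*(\gamma_1\otimes 1+\gamma_2\otimes i)=I(\gamma_1)+iI(\gamma_2)$. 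Your closing remark about fixing the complex-bilinear normalization of the pairing is indeed the one point in your route that requires care, and you handle it correctly.
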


\begin{proof}
	Let $T : \Lip_0(M,\R) \iten \C \to \Lip_0(M,\C)$ and $S :  \Lip_0(M,\C) \to  \Lip_0(M,\R) \iten \C $ be the $\C$-linear maps such that 
	$$T(f_1 \otimes 1 + f_2 \otimes i) := f_1 + i f_2 \quad \text{and} \quad S(f) := \re(f) \otimes 1 + \im(f)\otimes i.  $$
	Obviously $S \circ T = Id$, $T \circ S = Id$, and these maps are isometries since 
	\begin{align*}
		\|T(f_1  \otimes 1   + f_2 \otimes i)  \| & = \|f_1  + i f_2 \|_{\Lip_0(M,\C)} \\
		&= \sup_{x \neq y} \big| \big(f_1(x)-f_1(y)\big) + i\big( f_2(x) - f_2(y) \big) \big|   d(x,y)^{-1}\\
		&= \sup_{x \neq y} \Big( \big(f_1(x)-f_1(y)\big)^2 + \big( f_2(x) - f_2(y) \big)^2 \Big)^{\frac 1 2}   d(x,y)^{-1} \\ 
  		&= \sup_{x \neq y} \; \sup_{\theta \in [0 ,2 \pi]}  |\cos(\theta)(f_1(x) - f_1(y)) +  \sin(\theta)(f_2(x) - f_2(y))| d(x,y)^{-1} \\ 
		&= \sup_{\theta \in [0 ,2 \pi]} \|\cos(\theta)f_1 + \sin(\theta) f_2\|_{\Lip_0(M,\R)}\\ 
		&= \|f_1 \otimes 1 + f_2 \otimes i\|_{\ep}.
	\end{align*}
	Since $T$ and $S$ are continuous for the topology of pointwise convergence, the Banach-Dieudonné theorem implies that $T$ and $S$ are weak$^*$-to-weak$^*$ continuous. Therefore their pre-adjoint operators $T_* : \F(M,\C) \to \F(M,\R) \pten \C$ and $S_* : \F(M,\R) \pten \C \to \F(M,\C)$
	are isometric isomorphisms.
\end{proof}

It is not difficult to provide a concrete formula for $S_*$:
$$\forall \gamma_1 , \gamma_2 \in \F(M,\R), \quad  S_* (\gamma_1 \otimes 1 + \gamma_2 \otimes i) = I(\gamma_1) + i I(\gamma_2).$$
where $I : \F(M,\R) \to \F(M,\C)$ is the isometry from Proposition~\ref{RemarkValueRC}. To write an explicit (intrinsic) formula of the inverse mapping, we need the following terminology.

\begin{definition}
	For every $\gamma \in \F(M,\C)$, let 
	\begin{itemize}
		\item $\overline{\gamma} : f \in \Lip_0(M,\C) \mapsto \overline{\big\langle \overline{f} , \gamma \big\rangle} \in \C$,
		\medskip
		
		\item $\re(\gamma) = \frac 1 2 (\gamma + \overline{\gamma})$ and $\im(\gamma) = \frac{1}{2i} (\gamma - \overline{\gamma})$. 
	\end{itemize}
\end{definition} 
It is straightforward to check that $\overline{\gamma} \in \F(M,\C)$ with $\|\overline{\gamma}\| = \|\gamma\|$. Thus $\re(\gamma)$ and $\im(\gamma)$ belong to $\F(M,\C)$, and the triangle inequality gives $\|\re(\gamma)\| \leq \|\gamma\|$, $\|\im(\gamma)\| \leq \|\gamma\|$. We also clearly have $\gamma = \re(\gamma) + i \, \im(\gamma)$. In a more concrete way: if $\gamma = \sum_j (a_j+ib_j)\delta(x_j)$ then $\overline{\gamma} = \sum_j (a_j-ib_j)\delta(x_j)$, $\re(\gamma) = \sum_j a_j \delta(x_j)$ and $\im(\gamma) = \sum_j b_j \delta(y_j)$. In particular, we readily obtain that for every $\gamma \in \F(M,\C)$, 
$$\re(\gamma) , \im(\gamma) \in I(\F(M,\R)) = \overline{\lspan}_{\R}(\delta_\C(M)),$$
where $I$ is the isometry from Proposition~\ref{RemarkValueRC}.
Hence we can see $\re(\gamma)$ and $\im(\gamma)$ as elements of $\F(M,\R)$ (up to a composition with $I^{-1} : \overline{\lspan}_{\R}(\delta_\C(M)) \to \F(M , \R)$).
With this terminology, one can easily check that $T_* : \F(M,\C) \to \F(M,\R) \pten \C$ is the isometry given by 
$$\forall \gamma \in \F(M , \C), \quad T_*(\gamma) = I^{-1}(\re(\gamma))\otimes 1 +  I^{-1}(\im(\gamma))\otimes i. $$

\begin{remark} \label{RemarkQuotient}
	The inclusion mapping $Id : f \in \Lip_0(M,\R) \mapsto f \in \Lip_0(M,\C)$ is a $\R$-linear isometry. Since it is obviously continuous for the topology of pointwise convergence, the Banach-Dieudonné theorem implies that this map is weak$^*$-to-weak$^*$ continuous. Therefore its pre-adjoint operator is a $\R$-linear quotient map $Q$ from $\F(M,\C)$ (seen as real Banach space) to $\F(M,\R)$. One can check that $Q(\gamma) = \re(\gamma)$, $\forall \gamma  \in \F(M,\C)$. In particular $Q \circ I = Id_{\F(M,\R)}$. 
\end{remark}

\subsection{Support in complex Lipschitz free spaces}

Recall that if $K \subset M$ then
$$\F_M(K,\K):=\overline{\mbox{span}}^{\| \cdot  \|}\left \{ \delta_{\K}(x) \, : \, x \in K  \right \}.$$
Our next aim is to define a notion of support for elements $\gamma \in \F(M,\K)$. For $\K = \R$, this has been achieved in the paper \cite{support1} for bounded metric spaces and later in \cite{APPP2019} for unbounded metric spaces. The key point is the intersection theorem which is as follows:

\begin{theorem}[\textbf{Intersection theorem}]
Let $M$ be a complete pointed metric space and let $\{K_i:i\in I\}$ be a family of closed subsets of $M$ containing the base point. Then
$$
\bigcap_{i\in I}\F_M(K_i,\K)=\mathcal{F}_{M}\pare{\bigcap_{i\in I}K_i, \K} .
$$
\end{theorem}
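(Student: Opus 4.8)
The plan is to split the statement into the two inclusions and to reduce the complex case to the already-known real case. The inclusion $\F_M(\bigcap_i K_i, \K) \subseteq \bigcap_i \F_M(K_i, \K)$ is immediate and I would dispatch it first: writing $K := \bigcap_{i \in I} K_i$, the inclusion $K \subseteq K_i$ gives $\delta_\K(K) \subseteq \delta_\K(K_i)$, hence $\F_M(K,\K) \subseteq \F_M(K_i,\K)$ for every $i$, and intersecting over $i$ finishes this direction. For the reverse inclusion with $\K = \R$ there is nothing to do: this is precisely the intersection theorem proved in \cite{support1} (bounded case) and \cite{APPP2019} (general case), which I would simply invoke. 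Thus the whole task is to establish the nontrivial inclusion $\bigcap_i \F_M(K_i,\C) \subseteq \F_M(K,\C)$, and the strategy is to transfer it to the real statement via the real/imaginary part decomposition $\gamma = \re(\gamma) + i\,\im(\gamma)$ and the quotient map $Q : \F(M,\C) \to \F(M,\R)$ from Remark~\ref{RemarkQuotient}.

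First I would isolate the following localisation property, which is the heart of the reduction: for every closed $L \subseteq M$ containing $0_M$, one has $Q(\F_M(L,\C)) \subseteq \F_M(L,\R)$. To see it, recall that $Q$ is a continuous quotient map satisfying $Q \circ I = \mathrm{Id}_{\F(M,\R)}$, where $I$ is the isometry of Proposition~\ref{RemarkValueRC}; consequently $Q(\delta_\C(x)) = \delta_\R(x)$ and $Q(i\,\delta_\C(x)) = 0$ for each $x \in L$. Hence $Q$ maps the complex span of $\{\delta_\C(x) : x \in L\}$ into the real span of $\{\delta_\R(x) : x \in L\}$, and passing to closures (using continuity of $Q$) gives the claim.

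With this in hand, the conclusion would follow quickly. Taking $\gamma \in \bigcap_i \F_M(K_i,\C)$ and applying the localisation property with $L = K_i$ shows $Q(\gamma) \in \bigcap_i \F_M(K_i,\R)$, so the real intersection theorem gives $Q(\gamma) \in \F_M(K,\R)$. Since $\re(\gamma) = I(Q(\gamma))$ and $I(\F_M(K,\R)) = \overline{\lspan}_{\R}\{\delta_\C(x):x\in K\} \subseteq \F_M(K,\C)$, this yields $\re(\gamma) \in \F_M(K,\C)$. Finally, each $\F_M(K_i,\C)$ is a complex subspace, so $i\gamma$ again belongs to $\bigcap_i \F_M(K_i,\C)$; applying the same reasoning to $i\gamma$ and using $\re(i\gamma) = -\im(\gamma)$ gives $\im(\gamma) \in \F_M(K,\C)$. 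Therefore $\gamma = \re(\gamma) + i\,\im(\gamma) \in \F_M(K,\C)$, as desired.

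I expect the main obstacle to be purely the deep real case, which I am treating as a black box via \cite{support1, APPP2019}; within the reduction itself the only step needing genuine care is the localisation property $Q(\F_M(L,\C)) \subseteq \F_M(L,\R)$, since it is what guarantees that taking real parts does not enlarge the support. Everything else is formal manipulation of the $\C$-linear structure once that property and the real theorem are available.
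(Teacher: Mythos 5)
Your proof is correct, but it takes a genuinely different route from the paper. The paper does not reduce to the real case: it re-runs the ideal-theoretic proof of \cite{support1} directly over $\C$, introducing the weak$^*$-closed ideals $\ideal(K)$, the hull operation $\hull(Y)$, the weighted operators $W_h$, and Proposition~\ref{PropIdeal} ($\overline{Y}^{w^*}=\ideal(\hull(Y))$ for ideals $Y$), then concludes by the annihilator computation $\bigcap_i \ideal_M(K_i)_\perp = (\overline{Y}^{w^*})_\perp = \ideal_M(K)_\perp$; the unbounded case is delegated to \cite{AACD20}. You instead treat the real intersection theorem as a black box and transfer it to $\C$ via the quotient map $Q=\re$ of Remark~\ref{RemarkQuotient}. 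Your localisation lemma $Q(\F_M(L,\C))\subseteq \F_M(L,\R)$ is correctly proved (the span computation $Q(\sum(a_j+ib_j)\delta_\C(x_j))=\sum a_j\delta_\R(x_j)$ plus continuity of $Q$), the identity $\re(i\gamma)=-\im(\gamma)$ is right, and the $\C$-linearity of each $\F_M(K_i,\C)$ legitimately lets you run the argument on $i\gamma$ to capture the imaginary part. What each approach buys: the paper's route is self-contained modulo Weaver's complex-scalar results and produces the ideal machinery (Proposition~\ref{PropIdeal}), which the paper reuses conceptually; your reduction is shorter, avoids re-verifying the algebra/ideal ingredients over $\C$, and handles bounded and unbounded $M$ uniformly in one stroke, using only the complexification structure ($I$, $Q$, $\re$, $\im$) that Section~\ref{sectionFC} already sets up. The only caveat is that your argument is not independent of the real-scalar references \cite{support1, APPP2019}, but that dependence is explicit and legitimate.
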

Once the previous theorem is established, one can define the \textit{support of an element $\gamma \in \F(M,\K)$} to be the smallest closed subset $K \subset M$ such that $\gamma \in \F_M(K, \K)$. We claim that the proof of the intersection theorem presented in \cite{support1} for $\K=\R$ can be followed line by line to obtain its complex counterpart. To convince the reader, we will recall the main ingredients of the proof of the intersection theorem, arguing that there is absolutely no difference between the real and the complex case. 
\smallskip

Let us consider first a \textbf{bounded} and \textbf{complete} metric space $M$. Is it readily checked that
$\Lip_0(M,\C)$ is an algebra under pointwise multiplication:
$$\forall f,g \in \Lip_0(M,\C), \quad \|f \cdot g\|_L \leq 2\diam(M) \|f\|_L \|g\|_L.$$
Next, for any set $K \subset M$ containing the base point, we define 
$$ \mathcal I(K) := \{f \in \Lip_0(M,\C) \; : \; f\restricted_K = 0\}.$$
This is a weak$^*$-closed ideal of $\Lip_0(M,\C)$ \big(an ideal in $\Lip_0(M,\C)$ is a subspace $Y$ such that $f\cdot g\in Y$ for any $f\in Y$ and $g\in \Lip_0(M,\C)$\big). In fact, it is not hard to see that $\F_M(K,\C)^\perp = \ideal(K)$ and $\ideal(K)_\perp = \F_M(K,\C)$. On the other hand, for any subspace $Y \subset \Lip_0(M,\C)$ we will consider the hull of $Y$:
$$ \hull(Y):= \{x \in M \; : \; f(x) = 0 \text{ for all } f \in Y\}.$$
This is also not difficult to see that $\hull(\ideal(K)) = K$ for any closed subset $K \subset M$ (simply consider the 1-Lipschitz map $x \mapsto \dist(x , K)$). The following statement is the complex version of \cite[Proposition~3.2]{support1}.
\begin{proposition}\label{PropIdeal}
    Let $M$ be a bounded and complete pointed metric space. If $Y$ is an ideal in  $\Lip_0(M,\C)$ then $\overline{Y}^{w^*} = \ideal ( \hull(Y) )$.
\end{proposition}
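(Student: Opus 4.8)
The plan is to establish the two inclusions separately; the first is formal and the second carries all the weight.

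\emph{The easy inclusion and a reduction.} Every $f \in Y$ vanishes on $\hull(Y)$ by the definition of the hull, so $Y \subseteq \ideal(\hull(Y))$; since $\ideal(\hull(Y)) = \mathcal{F}_M(\hull(Y),\C)^\perp$ is weak$^*$-closed, passing to the weak$^*$-closure yields $\overline{Y}^{w^*} \subseteq \ideal(\hull(Y))$. For the reverse inclusion I would first replace $Y$ by $J := \overline{Y}^{w^*}$. This is again an ideal: for fixed $g \in \Lip_0(M,\C)$ the multiplication operator $M_g : p \mapsto g p$ is bounded and continuous for the topology of pointwise convergence on bounded sets, so by the Banach--Dieudonn\'e theorem (as in the proof of Proposition~\ref{pten}) it is weak$^*$-to-weak$^*$ continuous, whence $M_g(J) \subseteq \overline{M_g(Y)}^{w^*} \subseteq J$. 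Moreover $\hull(J) = \hull(Y)$, because vanishing at a fixed point is preserved under weak$^*$ limits. Thus it suffices to show $\ideal(K) \subseteq J$ for the weak$^*$-closed ideal $J$, where $K := \hull(J)$; equivalently, that every $f \in \Lip_0(M,\C)$ vanishing on $K$ belongs to $J$.

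\emph{Truncation.} I would first reduce to the case where $\text{coz}(f)$ is separated from $K$. Compose $f$ with the radial soft-thresholdings $\theta_n(z) = \tfrac{z}{|z|}\max(|z|-\tfrac1n,0)$ (with $\theta_n(0)=0$), which are $1$-Lipschitz and fix $0$. Then $f_n := \theta_n \circ f \in \Lip_0(M,\C)$ satisfies $\|f_n\|_L \le \|f\|_L$, vanishes on $\{|f| \le \tfrac1n\} \supseteq K$, and converges pointwise to $f$; being norm-bounded it converges weak$^*$ to $f$, so by weak$^*$-closedness of $J$ it is enough to prove $f_n \in J$. Since $f$ is $\|f\|_L$-Lipschitz and vanishes on $K$, any point with $|f| > \tfrac1n$ lies at distance at least $(n\|f\|_L)^{-1}$ from $K$, so $\text{coz}(f_n)$ is indeed separated from $K$. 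Hence we may assume $\text{coz}(f)$ lies at distance $\ge \eta > 0$ from $K$.

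\emph{Division and localization.} The algebraic engine is a division lemma: if $h \in J$, $c>0$, and $f$ vanishes on $\{|h| \le c\}$, then $f \in J$. Indeed, $g := f/h$ on $\{|h|>c\}$, extended by $0$, is Lipschitz — the quotient rule controls it on $\{|h|>c\}$ (where $|h|\ge c$ and, $M$ being bounded, $f,h$ are bounded), while across the interface $|g(x)-g(y)| = |f(x)|/|h(x)| \le \|f\|_L\, d(x,y)/c$ because $f$ vanishes on $\{|h|\le c\}$ — and $g(0_M)=0$, so $g \in \Lip_0(M,\C)$ and $f = gh \in J$. To apply it, note that for each $x \in \overline{\text{coz}(f)}$ we have $x \notin K = \hull(J)$, so there is $h_x \in J$ with $h_x(x) \neq 0$; then $|h_x|^2 = h_x\overline{h_x} \in J$ (here the only genuinely complex point: conjugation preserves $\Lip_0(M,\C)$ and $J$ is a module over $\Lip_0(M,\C)$), and $|h_x|^2 \ge \tfrac12|h_x(x)|^2 > 0$ on a ball $B(x,r_x)$. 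Covering $\overline{\text{coz}(f)}$ by such balls and extracting a finite subcover $B(x_1,r_1),\dots,B(x_k,r_k)$, the function $H := \sum_{i=1}^k |h_{x_i}|^2 \in J$ is bounded below by some $c>0$ on $\text{coz}(f)$; since $f$ then vanishes on $\{H \le c/2\}$, the division lemma gives $f \in J$.

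The step I expect to be the main obstacle is precisely this covering argument: a bounded complete metric space need not be compact, so $\overline{\text{coz}(f)}$ may admit no finite subcover, and one cannot in general produce a single $H \in J$ bounded below on the whole cozero set (equivalently, a multiplier $e \in J$ equal to $1$ on $\text{coz}(f)$, after which $f = fe \in J$). Overcoming this is the technical heart of the present proposition and of the intersection theorem. However, by design all the ingredients above — the pointwise characterization of the weak$^*$ topology, the Banach-algebra estimate $\|fg\|_L \le 2\diam(M)\|f\|_L\|g\|_L$, the behaviour of conjugation and of the module structure, and the separation Lemma~\ref{LemmaConstrctionLipF} — are completely insensitive to whether $\K = \R$ or $\K=\C$. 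This is exactly what justifies following the real-valued argument of \cite[Proposition~3.2]{support1} line by line in the complex setting.
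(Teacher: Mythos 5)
Your preparatory reductions are all correct and worth keeping: the easy inclusion, the fact that $J := \overline{Y}^{w^*}$ is again an ideal (your Banach--Dieudonn\'e argument for the multiplication operators is a clean substitute for the weighted-operator argument the paper points to), the identity $\hull(J)=\hull(Y)$, the truncation via the complex soft-thresholding $\theta_n$, and the division lemma. But the proof is not complete, and you say so yourself: the covering step that produces a single $H\in J$ bounded away from zero on $\text{coz}(f)$ needs $\overline{\text{coz}(f)}$ to be compact, and a bounded complete metric space need not be. This is a genuine gap, not a technicality: what you are trying to establish at that point is precisely the inclusion $\ideal(\hull(J))\subseteq J$ for a weak$^*$-closed ideal $J$, i.e.\ the nontrivial half of the characterization of weak$^*$-closed ideals of $\Lip_0(M)$. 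The closing appeal to ``every ingredient is insensitive to $\R$ versus $\C$'' does not repair it, because the one ingredient that actually defeats non-compactness is not among those you list, and your own attempted substitute for it is the step that breaks.

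The paper does not follow your direct route. Once it knows that $\overline{Y}^{w^*}$ is a weak$^*$-closed ideal, it invokes \cite[Corollary~4.2.6]{Weaver1} --- a norm-closed ideal $J$ of $\Lip_0(M)$ is weak$^*$-closed if and only if $J=\ideal(\hull(J))$ --- which is stated and proved in Weaver's book for complex-valued Lipschitz functions from the outset; combined with $\hull(\overline{Y}^{w^*})=\hull(Y)$ this yields the proposition in one line. So the fix for your write-up is either to cite that corollary exactly where your covering argument stalls, or to reproduce its proof (which is where the genuine work of handling a non-compact $M$ is done); as it stands, the proposal establishes strictly less than the statement.
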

The proof of \cite[Proposition~3.2]{support1} also works for complex scalars. Indeed, the main ingredients are:
\begin{itemize}[leftmargin=*, itemsep=5pt]
    \item \textit{If $Y$ is a norm closed ideal in $\Lip_0(M,\C)$ then $Y$ is weak$^*$-closed if and only if $Y = \ideal(\hull(Y))$ \cite[Corollary 4.2.6]{Weaver1}.} Notice that complex-valued Lipschitz maps are considered from the beginning of Chapter 4 in \cite{Weaver1}.
   
    \item \textit{Multiplication operators $M_h : \Lip_0(K,\C) \to \Lip_0(M,\C)$ and their pre-adjoint operators $W_h : \F(M,\C) \to \F(K,\C)$, sometimes called weighted operators.} Precisely, let $h\in\Lip(M)$ have bounded support. Let $K\subset M$ which contains the base point and the support of $h$. For $f\in\Lip_0(K,\C)$, $T_h(f)$ is the function given by
    \begin{equation*}
    \label{eq:T_h}
        T_h(f)(x)=\begin{cases}
        f(x)h(x) & \text{if } x\in K \\
        0 & \text{if } x\notin K.
    \end{cases} \,
    \end{equation*}
    Then $T_h$ defines a weak$^*$-to-weak$^*$ continuous linear operator from $\Lip_0(K,\C)$ into $\Lip_0(M,\C)$, and $\norm{T_h}\leq\norm{h}_\infty+\rad(\supp(h))\lipnorm{h}$.
    So there is an associated bounded linear operator $W_h\colon\lipfree{M,\C}\rightarrow\lipfree{K,\C}$ such that $\dual{W_h}=T_h$. 
    These objects work equally well for $\K = \R$ and $\K = \C$ and we refer the reader to \cite[Lemma~2.3]{APPP2019} for more details. 
    \item \textit{If $Y$ is an ideal then $\overline{Y}^{w^*}$ is also an ideal.} The proof of this claim uses an operator of the kind $W_h$ as defined above; see \cite[Proposition~3.2]{support1} for more details.
\end{itemize}

Exactly as in \cite[Theorem~3.3]{support1} (or \cite[Theorem~2.1]{APPP2019}), this is enough to deduce the intersection theorem for bounded metric spaces. Indeed, consider a family $\{K_i:i\in I\}$ of closed subsets of $M$ containing the base point. Then
define $Y=\lspan\set{\idealsub{M}{K_i}:i\in I}$. It is easily checked that $Y$ is an ideal. Therefore $\overline{Y}^{w^*} = \ideal ( \hull(Y) )$ thanks to Proposition~\ref{PropIdeal}. Writing $K=\bigcap_i K_i$, we claim that $K = \hull(Y)$. Indeed, the inclusion $K \subset \hull(Y)$ is obvious so we focus on the converse. If $x \not\in K$, then there exists $i \in I$ such that $x \not\in K_i$. We define the 1-Lipschitz map $f : x \in M \mapsto d(x, K_i)$ which belongs to $Y$. Clearly $f(x) \neq 0$ which means that $x \not\in \hull(Y)$, and therefore the reverse inclusion $\hull(Y) \subset K$ holds. In particular,
$\wscl{Y}=\idealsub{M}{K}$
and from there we conclude as follows:
\begin{align*}
\bigcap_{i\in I}\lipfreesub{M}{K_i , \C} &= \bigcap_{i\in I}\idealsub{M}{K_i}_\perp = \pare{\bigcup_{i\in I}\idealsub{M}{K_i}}_\perp = Y_\perp = \pare{\wscl{Y}}_\perp  \\
& = \idealsub{M}{K}_\perp = \lipfreesub{M}{K,\C} \,.
\end{align*}

\medskip

To obtain the intersection theorem for unbounded metric spaces $M$, we simply apply Section~7.2 in \cite{AACD20} where complex Lipschitz free spaces are considered from the beginning of the aforementioned paper. Now the intersection theorem allows us to define the notion of support as follows:
\begin{definition}
	The support of $\gamma \in \F(M,\C)$, denoted by $\supp(\gamma)$, is the smallest closed subset $K \subset M$ such that $\gamma \in \F_M(K, \C)$.
\end{definition} 

Note that by convention $\lspan \emptyset = 0$ and therefore $\supp (0) = \emptyset$. We conclude this section with a few results related to supports which will be useful in the sequel. The first one is the complex version of \cite[Proposition 2.7]{support1}.

\begin{proposition} \label{CharacSupport}
    Let $M$ be a complete pointed metric space and let $\gamma \in \F(M,\C)$. Then $x \in \supp(\gamma)$ if and only if for every neighbourhood $U$ of $x$, there exists $f \in \Lip_0(M,\C)$ whose support is contained in $U$ and such that $\langle f , \gamma \rangle \neq 0$. Moreover, in that case we may take $f \geq 0$.
\end{proposition}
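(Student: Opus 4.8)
The plan is to argue directly from the definition of the support together with the two annihilator relations already recorded for closed $K\subseteq M$, namely $\F_M(K,\C)^\perp=\mathcal{I}(K)$ and, dually, $\mathcal{I}(K)_\perp=\F_M(K,\C)$ (the latter being the bipolar theorem applied to the norm-closed subspace $\F_M(K,\C)$; both hold with no completeness assumption). Write $K:=\supp(\gamma)$, which is well defined because $M$ is complete. I would prove each implication by contraposition, and settle the ``moreover'' clause at the very end by a short truncation argument. No genuinely complex-specific difficulty arises in the two main implications: they run exactly as in the real case once one has the complex intersection theorem and the relations above.

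\textbf{The neighbourhood condition forces $x\in K$.} I would prove the contrapositive: assume $x\notin K$. Since $K$ is closed, $r:=\dist(x,K)>0$ (the case $K=\emptyset$, i.e. $\gamma=0$, being trivial), so $U:=B(x,r)$ is a neighbourhood of $x$ disjoint from $K$. Any $f$ with $\supp(f)\subseteq U$ then vanishes on $K$, that is $f\in\mathcal{I}(K)$; as $\gamma\in\F_M(K,\C)=\mathcal{I}(K)_\perp$, we get $\langle f,\gamma\rangle=0$. Hence no witnessing function exists for this $U$, which is precisely the contrapositive.

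\textbf{Membership $x\in K$ yields witnesses.} Again by contraposition, suppose some neighbourhood $U$ of $x$ satisfies $\langle f,\gamma\rangle=0$ for every $f$ with $\supp(f)\subseteq U$; I want $x\notin K$. Choose $r>0$ with $B(x,r)\subseteq U$ and set $V:=B(x,r/2)$, so that $\overline{V}\subseteq U$, and let $L:=M\setminus V$, a closed set. The crux is to check that $\gamma\in\F_M(L,\C)$: any $g\in\mathcal{I}(L)$ vanishes off $V$, hence $\supp(g)\subseteq\overline{V}\subseteq U$ and therefore $\langle g,\gamma\rangle=0$ by hypothesis; this says exactly $\gamma\in\mathcal{I}(L)_\perp=\F_M(L,\C)$. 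Since $K=\supp(\gamma)$ is the smallest closed set whose associated $\F_M$ contains $\gamma$, it is contained in every member of that family, so $K\subseteq L=M\setminus V$; as $x\in V$, we conclude $x\notin K$. The one delicate point here is that one must pass to a strictly smaller ball $V$ with $\overline{V}\subseteq U$: this is what converts the hypothesis ``$\supp(f)\subseteq U$'' into the inclusion ``$\supp(g)\subseteq U$'' for functions $g$ that merely vanish on $L$, and it is the real engine of the argument.

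\textbf{The nonnegative refinement.} Finally, fix $x\in K$ and a neighbourhood $U$, and take $f$ with $\supp(f)\subseteq U$ and $\langle f,\gamma\rangle\neq 0$ from the previous step. Writing $f=\re(f)+i\,\im(f)$ and using $\C$-linearity of the pairing in the first slot, at least one of the real-valued functions $\re(f),\im(f)$ — both still supported in $U$ — pairs nontrivially with $\gamma$; call it $h$. Splitting $h=h^{+}-h^{-}$ into positive and negative parts, both $h^{\pm}\in\Lip_0(M,\R)$ are nonnegative, vanish at $0_M$ (since $h(0_M)=0$), are Lipschitz with constant at most $\|h\|_L$, and satisfy $\supp(h^{\pm})\subseteq\supp(h)\subseteq U$. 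Because $\langle h^{+},\gamma\rangle-\langle h^{-},\gamma\rangle=\langle h,\gamma\rangle\neq 0$, at least one of $h^{+},h^{-}$ is the desired $f\geq 0$. This last step is the only place where the complex setting is actually exploited, and it reduces to the real case by taking real and imaginary parts; everything else is bookkeeping with annihilators and Lipschitz truncations.
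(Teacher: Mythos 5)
Your proof is correct. The paper does not actually prove Proposition~\ref{CharacSupport}; it only asserts it as the complex analogue of \cite[Proposition~2.7]{support1}, and your argument is precisely the standard one underlying that reference: the first implication is immediate from $\F_M(K,\C)^\perp=\mathcal I(K)$, the second uses the bipolar relation $\mathcal I(L)_\perp=\F_M(L,\C)$ together with minimality of the support (which is where completeness and the intersection theorem enter), and the ``moreover'' clause follows by splitting into real and imaginary parts and then positive and negative parts. The only cosmetic point worth recording is that when you invoke minimality of $\supp(\gamma)$ over the closed set $L=M\setminus V$ you should note the usual convention $\F_M(L,\C)=\F_M(L\cup\{0_M\},\C)$ (harmless since $\delta(0_M)=0$), so that $L$ may be assumed to contain the base point as required by the intersection theorem.
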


We now relate the support of an element with the support of its real and imaginary parts. 

\begin{lemma} \label{supportCtoR}
	Let $M$ be a complete pointed metric space and let $\gamma \in \F(M,\C)$. Then $\supp(\gamma) = \supp(\re(\gamma)) \cup \supp(\im(\gamma))$. 
\end{lemma}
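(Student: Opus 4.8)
The plan is to exploit the decomposition $\gamma = \re(\gamma) + i\,\im(\gamma)$ provided by the definition, together with the fact that, for any closed subset $K \subseteq M$ containing the base point, $\F_M(K,\C)$ is a norm-closed \emph{complex} subspace of $\F(M,\C)$ (being the closed complex linear span of $\{\delta(x) : x \in K\}$). Both inclusions will then reduce to tracking which sets $K$ satisfy $\re(\gamma), \im(\gamma), \gamma \in \F_M(K,\C)$, and invoking the minimality built into the definition of the support.

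For the inclusion $\supp(\gamma) \subseteq \supp(\re(\gamma)) \cup \supp(\im(\gamma))$, write $L := \supp(\re(\gamma)) \cup \supp(\im(\gamma))$. By definition of the support, $\re(\gamma) \in \F_M(\supp(\re(\gamma)),\C) \subseteq \F_M(L,\C)$ and $\im(\gamma) \in \F_M(\supp(\im(\gamma)),\C) \subseteq \F_M(L,\C)$. Since $\F_M(L,\C)$ is a complex subspace, it also contains $i\,\im(\gamma)$, and hence $\gamma = \re(\gamma) + i\,\im(\gamma) \in \F_M(L,\C)$. The minimality of $\supp(\gamma)$ then gives $\supp(\gamma) \subseteq L$.

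For the reverse inclusion it suffices to prove $\supp(\re(\gamma)) \subseteq \supp(\gamma)$ and $\supp(\im(\gamma)) \subseteq \supp(\gamma)$. Put $K := \supp(\gamma)$, so that $\gamma \in \F_M(K,\C)$. The key point is that conjugation $\mu \mapsto \overline{\mu}$ leaves $\F_M(K,\C)$ invariant: on a finitely supported element $\sum_j c_j \delta(x_j)$ with $x_j \in K$ it returns $\sum_j \overline{c_j}\,\delta(x_j)$, which still lies in $\F_M(K,\C)$; and since conjugation is a (real-linear) isometry --- recall $\|\overline{\mu}\| = \|\mu\|$ --- it is continuous, hence preserves the closed span $\F_M(K,\C)$. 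Consequently $\overline{\gamma} \in \F_M(K,\C)$, and therefore both $\re(\gamma) = \tfrac12(\gamma + \overline{\gamma})$ and $\im(\gamma) = \tfrac1{2i}(\gamma - \overline{\gamma})$ belong to $\F_M(K,\C)$. By minimality of the respective supports, $\supp(\re(\gamma)) \subseteq K$ and $\supp(\im(\gamma)) \subseteq K$, which is what we wanted.

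I expect no serious obstacle; the only step requiring a small verification is the invariance of $\F_M(K,\C)$ under conjugation, and this is purely formal once one uses density of finitely supported elements and the isometry property $\|\overline{\mu}\| = \|\mu\|$. Alternatively, one could establish the reverse inclusion through Proposition~\ref{CharacSupport}: if $x \in \supp(\re(\gamma))$, then for each neighbourhood $U$ of $x$ there is $f$ supported in $U$ with $\langle f, \re(\gamma)\rangle \neq 0$; expanding $\langle f, \re(\gamma)\rangle = \tfrac12(\langle f, \gamma\rangle + \overline{\langle \overline{f}, \gamma\rangle})$ shows that $\langle f, \gamma\rangle \neq 0$ or $\langle \overline{f}, \gamma\rangle \neq 0$, and since $\overline{f}$ is supported in $U$ as well, the characterization forces $x \in \supp(\gamma)$; the imaginary part is handled identically.
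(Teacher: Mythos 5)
Your proof is correct, but it takes a different route from the paper's. The paper argues entirely locally through Proposition~\ref{CharacSupport}: for a real-valued $f$ supported in a neighbourhood $U$ of $x$, the pairings $\langle f,\re(\gamma)\rangle$ and $\langle f,\im(\gamma)\rangle$ are real numbers, so $\langle f,\gamma\rangle=\langle f,\re(\gamma)\rangle+i\langle f,\im(\gamma)\rangle$ vanishes exactly when both do; negating the support criterion then yields both inclusions in one stroke (this is close to, but slightly slicker than, the alternative you sketch at the end, since restricting to real test functions removes the need to consider $\overline{f}$ separately). Your main argument is instead global and algebraic: you read the statement as an assertion about which closed sets $K$ satisfy $\gamma\in\F_M(K,\C)$, and reduce it to the facts that $\F_M(K,\C)$ is a complex subspace and is invariant under the conjugation $\mu\mapsto\overline{\mu}$ (verified on finitely supported elements and extended by continuity via the isometry $\|\overline{\mu}\|=\|\mu\|$). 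Both arguments are sound and both ultimately rest on the intersection theorem --- yours through the minimality built into the definition of the support, the paper's through Proposition~\ref{CharacSupport}; the paper's is shorter, while yours avoids any manipulation of test functions and isolates the useful observation, left implicit in the paper, that conjugation preserves each subspace $\F_M(K,\C)$.
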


\begin{proof}
	If $x \in M$ then $x \not\in \supp(\gamma)$ if and only if there exists a neighbourhood $U$ of $x$ such that for every $f \in \Lip_0(M,\R)$ whose support is contained in $U$, we have $\langle f , \gamma \rangle = 0$. Since $\gamma = \re(\gamma) + i\, \im(\gamma)$ and $\langle f ,  \re(\gamma) \rangle , \langle f , \im(\gamma) \rangle \in \R$, $\langle f , \gamma \rangle = 0$ if and only if $\langle f ,  \re(\gamma) \rangle = 0$ and $\langle f , \im(\gamma) \rangle = 0$. Hence $x \not\in \supp(\gamma)$ if and only if $x \not\in \supp(\re(\gamma))$ and $x \not\in \supp(\im(\gamma))$.
\end{proof}

In the next lemma, which is simply the complex version of Lemma 2.10 in \cite{ACP20}, recall that $|\supp \gamma |$ stands for the cardinality of the support of $\gamma$.

\begin{lemma} \label{WeaklyClosed} Let $M$ be a complete pointed metric space.
Let $\FS_n(M,\C) = \{\gamma \in \F(M,\C) \; : \; |\supp \gamma | \leq n  \}$. Then $\FS_n(M,\C)$ is weakly closed in $\F(M,\C)$.
\end{lemma}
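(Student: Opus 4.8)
The plan is to prove that the complement of $\FS_n(M,\C)$ is weakly open: around every $\gamma$ with $|\supp(\gamma)| \geq n+1$ I want to produce a basic weak neighbourhood all of whose elements have support of cardinality strictly greater than $n$. First I would fix such a $\gamma$ and select $n+1$ pairwise distinct points $x_0, \dots, x_n \in \supp(\gamma)$ (none equal to $0_M$). Since $M$ is a metric space, I can choose pairwise disjoint open sets $U_0, \dots, U_n$ with $x_i \in U_i$. By Proposition~\ref{CharacSupport}, for each $i$ there exists $f_i \in \Lip_0(M,\C)$ whose support is contained in $U_i$ and such that $\langle f_i, \gamma \rangle \neq 0$. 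As each $f_i$ lies in $\Lip_0(M,\C) = \F(M,\C)^*$, it is a weakly continuous functional on $\F(M,\C)$, and I would use these to define the weak neighbourhood
$$V := \Big\{ \mu \in \F(M,\C) \; : \; |\langle f_i , \mu \rangle - \langle f_i , \gamma \rangle | < \tfrac{1}{2}|\langle f_i , \gamma \rangle|, \ i = 0 , \dots , n \Big\}$$
of $\gamma$, which by construction forces $\langle f_i , \mu \rangle \neq 0$ for every $\mu \in V$ and every $i$.

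The crux is then to argue that every such $\mu$ has at least $n+1$ support points. The key observation is that if $\langle f_i , \mu \rangle \neq 0$ then $f_i$ cannot vanish identically on $K := \supp(\mu)$: indeed $\mu \in \F_M(K,\C)$ and $\F_M(K,\C)^\perp = \ideal(K) = \{h : h\restricted_K = 0\}$, so $h\restricted_K = 0$ would give $\langle h , \mu \rangle = 0$. Hence there exists $y_i \in K = \supp(\mu)$ with $f_i(y_i) \neq 0$, and since $\supp(f_i) \subset U_i$ this yields $y_i \in U_i$. As the $U_i$ are pairwise disjoint, the points $y_0, \dots, y_n$ are distinct, whence $|\supp(\mu)| \geq n+1$ and $\mu \notin \FS_n(M,\C)$. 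This shows $V \cap \FS_n(M,\C) = \emptyset$, so the complement of $\FS_n(M,\C)$ is weakly open.

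The only genuine subtlety — and what I expect to be the main point to get right — is the implication $\langle f_i,\mu\rangle \neq 0 \Rightarrow \supp(\mu)\cap U_i \neq \emptyset$, which rests entirely on the support machinery of the previous subsection: the identity $\F_M(K,\C)^\perp = \ideal(K)$ together with the very definition of $\supp(\mu)$ as the smallest closed $K$ with $\mu \in \F_M(K,\C)$, both of which were already established in the complex setting. Everything else is the soft fact that duality against finitely many fixed Lipschitz functions produces basic weak-open sets; I emphasize that, in contrast with the convex case, one cannot reduce this to norm-closedness, since $\FS_n(M,\C)$ is not convex. Finally, the argument is insensitive to the scalar field and applies verbatim to both $\F(M,\R)$ and $\F(M,\C)$; alternatively one could deduce the complex statement from the real one using Lemma~\ref{supportCtoR}, but the direct route above is cleaner.
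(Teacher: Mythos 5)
Your proof is correct and is essentially the paper's argument in contrapositive form: the paper takes a net $(\gamma_i)_i\subset\FS_n(M,\C)$ converging weakly to $\gamma\notin\FS_n(M,\C)$ and derives a contradiction using exactly the same bump functions $f_k$ from Proposition~\ref{CharacSupport} supported in disjoint balls around $n+1$ support points, concluding that $\supp(\gamma_i)$ must meet all $n+1$ balls for $i$ large. Your reformulation via an explicit basic weak neighbourhood, together with the justification of the step $\langle f_i,\mu\rangle\neq 0\Rightarrow\supp(\mu)\cap U_i\neq\emptyset$ through $\F_M(K,\C)^\perp=\ideal(K)$, is just a slightly more detailed writing of the same argument.
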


\begin{proof}
Aiming for a contradiction, suppose $(\gamma_i)_i \subset \FS_n(M,\C)$ is a net which weakly converges to some $\gamma \not\in \FS_n(M,\C)$. This means that $\supp(\gamma)$ contains at least $n+1$ points $x_1,\ldots,x_{n+1}$. Let $\delta>0$ be small enough so that the balls $B(x_k,\delta)$, for $k=1,\ldots,n+1$, are pairwise disjoint. By Proposition~\ref{CharacSupport}, there are $f_k\in\Lip_0(M)$ such that $\supp(f_k)\subset B(x_k,\delta)$ and $\langle f_k , \gamma \rangle \neq 0$. Therefore, if $i$ is large enough we must have $\langle f_k , \gamma_i \rangle \neq 0$ for every $k$, hence $\supp(\gamma_i)\cap B(x_k,\delta)\neq\emptyset$ for every $k$. This is impossible since $\supp(\gamma_i)$ only has $n$ elements.
\end{proof}

Finally the following important result is simply the complex version of \cite[Theorem~C]{ACP21}.

\begin{theorem} \label{ThmNormToWeak}
	Let $M$ be a pointed complete metric space. If a sequence $(\gamma_n)_n \subset \FS_n(M , \C)$
	weakly converges to some $\gamma  \in \F(M, \C)$, then $\gamma \in \FS_n(M , \C)$ and $(\gamma_n)_n$ actually converges to $\gamma$ in the norm topology.
\end{theorem}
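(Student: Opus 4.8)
The plan is to reduce the statement entirely to its real counterpart, \cite[Theorem~C]{ACP21}, by splitting into real and imaginary parts and transporting everything to $\F(M,\R)$ through the machinery developed above. Write the sequence as $(\gamma_m)_m$, each term lying in $\FS_n(M,\C)$ for the fixed $n$. The first assertion, that $\gamma \in \FS_n(M,\C)$, is free: $\FS_n(M,\C)$ is weakly closed by Lemma~\ref{WeaklyClosed}, and $\gamma$ is a weak limit of elements of this set. So the whole content is the upgrade from weak to norm convergence.

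Next I would transport the problem to the real free space via the quotient map $Q : \F(M,\C) \to \F(M,\R)$ of Remark~\ref{RemarkQuotient}. Recall that $Q(\eta) = \re(\eta)$ (identified, through the isometry $I$ of Proposition~\ref{RemarkValueRC}, with $I^{-1}(\re\eta) \in \F(M,\R)$), and that $Q$ is a bounded $\R$-linear map, hence weak-to-weak continuous. Set $\alpha_m := Q(\gamma_m)$ and $\beta_m := Q(-i\gamma_m)$, so that $I\alpha_m = \re(\gamma_m)$ and $I\beta_m = \re(-i\gamma_m) = \im(\gamma_m)$. Since $\gamma_m \to \gamma$ weakly, also $-i\gamma_m \to -i\gamma$ weakly, and applying the weak-to-weak continuous map $Q$ gives $\alpha_m \to \alpha := Q(\gamma)$ and $\beta_m \to \beta := Q(-i\gamma)$ weakly in $\F(M,\R)$. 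By Lemma~\ref{supportCtoR} we have $\supp(\re\gamma_m) \subseteq \supp(\gamma_m)$, and since $I$ carries $\delta_\R(x)$ to $\delta_\C(x)$ it preserves support, so $|\supp \alpha_m| \le |\supp\gamma_m| \le n$, and likewise $|\supp\beta_m| \le n$. Thus $\alpha_m, \beta_m \in \FS_n(M,\R)$.

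Now the real theorem applies directly to each sequence: \cite[Theorem~C]{ACP21} yields $\norm{\alpha_m - \alpha} \to 0$ and $\norm{\beta_m - \beta} \to 0$. Because $I$ is an isometry, this translates back as $\norm{\re\gamma_m - \re\gamma} = \norm{I\alpha_m - I\alpha} = \norm{\alpha_m - \alpha} \to 0$, and similarly $\norm{\im\gamma_m - \im\gamma} \to 0$. Since $\gamma = \re\gamma + i\,\im\gamma$ and multiplication by $i$ is isometric, the triangle inequality gives $\norm{\gamma_m - \gamma} \le \norm{\re\gamma_m - \re\gamma} + \norm{\im\gamma_m - \im\gamma} \to 0$, which is the desired norm convergence.

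The reduction itself is routine bookkeeping; the genuine analytic difficulty has been packaged into the cited real statement, so in this formulation there is no real obstacle beyond checking weak-to-weak continuity of $Q$ and the preservation of support cardinality under $I$. If instead one wanted a self-contained argument, the natural route is to first reduce (by passing to $\gamma_m - \gamma \in \FS_{2n}$) to showing that a weakly null sequence of molecules with at most $N$ support points is norm null, and then mimic the real proof. There the hard part is exactly the control of the support points $z_i^{(m)}$ as $m \to \infty$ — they may drift apart, coalesce, or escape every bounded set — and the crux is organizing them into clusters so that a single fixed Lipschitz test function detects any mass that fails to vanish; that combinatorial/geometric bookkeeping, rather than any one estimate, is where the work lies.
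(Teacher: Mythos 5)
Your proposal is correct and follows essentially the same route as the paper: split into real and imaginary parts, check the support bound via Lemma~\ref{supportCtoR}, apply the real-scalar \cite[Theorem~C]{ACP21} to each part, and recombine by the triangle inequality. The only cosmetic differences are that you obtain the weak convergence of $\re(\gamma_m)$ and $\im(\gamma_m)$ through the quotient map $Q$ of Remark~\ref{RemarkQuotient} applied to $\gamma_m$ and $-i\gamma_m$, where the paper uses the conjugation $\overline{\gamma_m}\to\overline{\gamma}$, and that you invoke Lemma~\ref{WeaklyClosed} for $\gamma\in\FS_n(M,\C)$ where the paper deduces it from Lemma~\ref{supportCtoR} at the end.
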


\begin{proof}
	Thanks to Lemma~\ref{supportCtoR}, $| \supp(\re(\gamma_n)) \cup \supp(\im(\gamma_n))| \leq n$. It is easily checked that $\overline{\gamma_n} \to \overline{\gamma}$ in the weak topology, and so $\re(\gamma_n) \to \re(\gamma)$ and $\im(\gamma_n) \to \im(\gamma)$ in the weak topology. Hence, by \cite[Theorem~C]{ACP21}, $\re(\gamma_n) \to \re(\gamma)$ and $\im(\gamma_n) \to \im(\gamma)$ in the norm topology, with moreover $| \supp(\re(\gamma)) \cup \supp(\im(\gamma)) | \leq n$. Applying Lemma~\ref{supportCtoR} again yields that $\gamma \in \FS_n(M , \C)$ and $(\gamma_n)_n$ actually converges to $\gamma$ in the norm topology.
\end{proof}

\textbf{From now on, unless otherwise specified, we tacitly assume that $\K=\C$ and therefore we will omit referring to $\K$. Of course, all the results in the sequel hold true in the case of real scalars.}

\subsection{A last remark about completeness}\label{remarkcomplete}

Let $M$ be any pointed metric space and let $\overline{M}$ be its completion. It is quite standard that any Lipschitz map $f : M \to \C$ admits a unique extension to $\overline{M}$ with the same Lipschitz constant $\|f\|_L$. On the other hand, the restriction to $M$ of a Lipschitz function $g : \overline{M} \to \C$ also has the same Lipschitz constant as the original map.  These two easy observations yield that the spaces $\Lip_0(M)$ and $\Lip_0(\overline{M})$ are linearly isometric as Banach spaces. The same thing can be said about their predual, that is about the spaces $\F(M)$ and $\F(\overline{M})$. At various places, we will require the spaces to be complete for technical issues (mostly when the notion of support comes into play). But, thanks to these last isometric identifications, no completeness assumptions are necessary for most of our theorems concerning weighted operators.

\section{Boundedness, injectivity and surjectivity} \label{bounded}

In this section, $M,N$ are pointed metric spaces and $f : M \to N$, $w:M \to \C$ are maps such that either $f(0_M)=0_N$ or $w(0_M)=0$. Recall that the \textit{weighted composition operator} $wC_f : \Lip_0(N) \mapsto \C^M$ is defined by:$$\forall g \in \Lip_0(N), \;  \forall x \in M, \quad wC_f(g)(x)  = w(x) \cdot g ( f(x) ).$$
Also remind that the \textit{weighted Lipschitz operators} is the unique extension, when it exists, of the linear map $w \widehat{f} : \lspan \delta(M) \to \lspan \delta(N)$ such that:
$$ \forall \gamma = \sum_{i=1}^n a_i \delta(x_i) \in  \F(M), \quad  w \widehat{f}(\gamma) = \sum_{i=1}^n w(x_i) \delta(f(x_i)).$$

\subsection{Boundedness}

In order to characterize bounded weighted Lipschitz operators, we will need the next result which should be compared with \cite[Lemma~11]{Cuth}.

\begin{lemma}\label{lemma:formula}
	Le $M$ be a pointed metric space. Let $\lambda_1,\lambda_2 \in \C$ and $x,y \in M$. 
	If 
	\begin{align*}
		M:=\max \big\{|\lambda_1 d(x,0) + \lambda_2 d(y,0)|,\; 
	 |\lambda_1d(x,0) +\lambda_2 (d(x,0)-d(x,y))|,\qquad \qquad \\
	 \hfill | \lambda_2 d(y,0) + \lambda_1(d(y,0)-d(x,y))| \big\}
	\end{align*}
	then 
	$$\frac{1}{\sqrt{2}}M \leq  \|\lambda_1 \delta(x) + \lambda_2 \delta(y)\|_{\F(M)} \leq \sqrt{2}M. $$
\end{lemma}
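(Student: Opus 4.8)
The plan is to compute $\norm{\gamma}_{\F(M)}$ for $\gamma:=\lambda_1\delta(x)+\lambda_2\delta(y)$ through the duality $\F(M)^*=\Lip_0(M)$, and to compare it with the suprema of the linear functional $\ell(a,b):=\lambda_1a+\lambda_2b$ over the \emph{real}, resp.\ \emph{complex}, range of admissible pairs $(f(x),f(y))$. Writing $r:=d(x,0_M)$, $s:=d(y,0_M)$, $t:=d(x,y)$, any $f\in\Lip_0(M)$ with $\norm{f}_L\le1$ satisfies $\abs{f(x)}\le r$, $\abs{f(y)}\le s$, $\abs{f(x)-f(y)}\le t$, so $(f(x),f(y))$ lies in the compact convex set $D:=\set{(a,b)\in\C^2:\abs{a}\le r,\ \abs{b}\le s,\ \abs{a-b}\le t}$, which is invariant under multiplication by unimodular scalars. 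I first note that the number $M$ in the statement equals $\max_{z\in R}\abs{\ell(z)}$, where $R:=D\cap\R^2=\set{z\in\R^2:\abs{z_1}\le r,\ \abs{z_2}\le s,\ \abs{z_1-z_2}\le t}$: since $\abs{\ell}$ is convex, its maximum over the polygon $R$ is reached at a vertex, and the vertices split into antipodal pairs represented by $(r,s)$, $(r,r-t)$, $(s-t,s)$, whose $\ell$-values have moduli equal to the three terms of $M$.

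For the lower bound I realise these three vertices by explicit $1$-Lipschitz functions: $f_1:=d(\cdot,0_M)$, $f_2:=r-d(\cdot,x)$ and $f_3:=s-d(\cdot,y)$ all belong to $B_{\Lip_0(M,\R)}\subset B_{\Lip_0(M)}$, vanish at $0_M$, and satisfy $(f_1(x),f_1(y))=(r,s)$, $(f_2(x),f_2(y))=(r,r-t)$, $(f_3(x),f_3(y))=(s-t,s)$. Pairing each $f_k$ with $\gamma$ yields $\abs{\ell(r,s)},\abs{\ell(r,r-t)},\abs{\ell(s-t,s)}\le\norm{\gamma}_{\F(M)}$, hence $M\le\norm{\gamma}_{\F(M)}$; in particular $\tfrac{1}{\sqrt2}M\le\norm{\gamma}_{\F(M)}$.

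For the upper bound, the membership $(f(x),f(y))\in D$ gives immediately $\norm{\gamma}_{\F(M)}\le P$, where $P:=\max_{(a,b)\in D}\abs{\ell(a,b)}$, so the whole problem reduces to the purely two-point comparison $P\le\sqrt2\,M=\sqrt2\max_{z\in R}\abs{\ell(z)}$. Let $c=(c_1,c_2)\in D$ attain $P$, and decompose $c=p+iq$ with $p=\re c$, $q=\im c\in\R^2$. The inequalities defining $D$ read $p_1^2+q_1^2\le r^2$, $p_2^2+q_2^2\le s^2$, $(p_1-p_2)^2+(q_1-q_2)^2\le t^2$; in particular $p,q\in R$, and more generally $\cos\theta\,p+\sin\theta\,q=\re(e^{-i\theta}c)\in R$ for every $\theta$. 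Putting $A:=\ell(p)$, $B:=\ell(q)$, the $\C$-linearity of $\ell$ gives $\ell(c)=A+iB$, whence
\[
P^2=\abs{A}^2+\abs{B}^2-2\im(\overline{A}B),\qquad \abs{A},\abs{B}\le M .
\]
If $\im(\overline{A}B)\ge0$ this already yields $P^2\le\abs{A}^2+\abs{B}^2\le2M^2$, as wanted.

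The remaining case $\im(\overline{A}B)<0$ is the main obstacle, and it is exactly here that the inscribed ellipse $\set{\cos\theta\,p+\sin\theta\,q}$ does not suffice: controlling $\abs{\ell}$ on that ellipse alone only gives $P\le2M$, and the sharp constant $\sqrt2$ genuinely requires the \emph{corners} of $R$. Concretely, I would normalise $\lambda_1=\rho_1\ge0$, $\lambda_2=\rho_2e^{i\omega}$ by a unimodular factor, set $\overline{c_1}c_2=uve^{i\beta}$ with $u=\abs{c_1}\le r$, $v=\abs{c_2}\le s$, rewrite the last constraint as $\cos\beta\ge(u^2+v^2-t^2)/(2uv)$ and the objective as $P^2=\rho_1^2u^2+\rho_2^2v^2+2\rho_1\rho_2uv\cos(\beta+\omega)$. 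Maximising first in $\beta$ (two subcases, according to whether the free optimum $\beta=-\omega$ is admissible or the constraint $\abs{c_1-c_2}\le t$ is active) and then in $u,v$, one compares the result with the vertex values $\abs{\ell(r,s)}^2$, $\abs{\ell(r,r-t)}^2$, $\abs{\ell(s-t,s)}^2$ of $M^2$ and checks $P^2\le2M^2$. The delicate point, where I expect the real work to lie, is this case analysis: it encodes the fact that whenever the complex optimum forces $B$ near $-iA$ (the configuration causing the factor $2$ on the ellipse), the coupling $p_1^2+q_1^2\le r^2$ and $p_2^2+q_2^2\le s^2$ simultaneously pushes one vertex of $R$ far enough out to keep $M\ge P/\sqrt2$. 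The collinear case $t=r+s$ with $(\lambda_1,\lambda_2)=(1,i)$, where $R$ is a square, $P=\rho_1 r+\rho_2 s$ and $M=P/\sqrt2$, shows that the constant $\sqrt2$ is optimal.
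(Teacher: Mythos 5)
Your lower bound is complete and in fact sharper than the statement requires: the three test functions $d(\cdot,0_M)$, $r-d(\cdot,x)$, $s-d(\cdot,y)$ give $M\le\|\lambda_1\delta(x)+\lambda_2\delta(y)\|_{\F(M)}$ directly, and your identification of $M$ with $\max_{z\in R}|\ell(z)|$ is correct (the triangle inequality is exactly what guarantees that $(r,s)$, $(r,r-t)$, $(s-t,s)$ and their antipodes are the vertices of $R$). The problem is the upper bound. After the (correct) reduction to $P\le\sqrt2\,M$ with $P=\max_{D}|\ell|$, you fully treat only the case $\im(\overline{A}B)\ge0$, and for $\im(\overline{A}B)<0$ you describe an optimization in $(\beta,u,v)$ that ``one checks'' yields $P^2\le 2M^2$, while yourself calling this the delicate point where the real work lies. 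That branch is not a corner case: since $D$ is invariant under coordinatewise conjugation and $\ell(\overline{c})=A-iB$, any maximizer of $|\ell|$ over $D$ satisfies $|A+iB|\ge|A-iB|$, i.e.\ $\im(\overline{A}B)\le0$; so the case you resolve occurs at a maximizer only when $\im(\overline{A}B)=0$, and the unresolved branch carries essentially the entire content of the inequality. As written, the upper half of the lemma is a correct reduction plus a plan, not a proof.

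The paper takes a shorter route that avoids any optimization over $D$: it splits the coefficients rather than the test values, writing $\gamma=\re(\gamma)+i\,\im(\gamma)$ where $\re(\gamma)$ and $\im(\gamma)$ are real combinations of $\delta(x),\delta(y)$, applies the exact real-scalar formula of C\'uth--Johanis to each part, and concludes from $\max\{\|\re(\gamma)\|,\|\im(\gamma)\|\}\le\|\gamma\|\le\|\re(\gamma)\|+\|\im(\gamma)\|$ together with the comparisons $\|u\|_2\le\sqrt2\|u\|_\infty$ and $\|u\|_1\le\sqrt2\|u\|_2$ in $\R^2$. If you want to keep your approach you must actually carry out the case analysis you defer (or switch to the coefficient decomposition). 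Note, incidentally, that even in the paper's argument the two maxima defining $\|\re(\gamma)\|$ and $\|\im(\gamma)\|$ need not be attained at the same one of the three quantities, so the $\ell_1$--$\ell_2$ step there deserves the same care you are postponing; your explicit collinear example with $(\lambda_1,\lambda_2)=(1,i)$, which shows $\sqrt2$ is optimal, is a good test for whichever argument you complete.
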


\begin{proof} Let us write $\gamma = \lambda_1 \delta(x) + \lambda_2 \delta(y)$. 
	It follows from Section~\ref{sectionFC} that
	$\|\re(\gamma)\|_{\F(M,\C)} = \|\re(\gamma)\|_{\F(M,\R)}$, $\|\im(\gamma)\|_{\F(M,\C)} = \|\im(\gamma)\|_{\F(M,\R)}$ and
	$$\max\{\|\re(\gamma)\| , \|\im(\gamma)\|\} \leq \|\gamma\|_{\F(M,\C)} \leq \|\re(\gamma)\| + \|\im(\gamma)\|.$$
	By \cite[Lemma~11]{Cuth}, if $a,b \in \R$ then 
	\begin{align*}
		\|a \delta(x) + b \delta(y)\|_{\F(M,\R)} 
		=   \max \big\{|ad(x,0) + bd(y,0)|,\; 
		& |ad(x,0) + b(d(x,0)-d(x,y))|,\; \\
		& |bd(y,0) + a(d(y,0)-d(x,y))| \big\}.
	\end{align*}
	Now the conclusion follows from the basic inequalities $\|u\|_2 \leq \sqrt{2}\|u\|_\infty$ and $\|u\|_1 \leq \sqrt{2}\|u\|_2$ in $\R^2$.
\end{proof}

The second lemma is an adaptation of \cite[Theorem~2.3]{Vargas2} for $\K = \C$.
In the statement below, we let 
$$\mathcal{M} := \left\{ d(x,y)^{-1}(\delta(x) - \delta(y)) \; : \; x \neq y \in M \right\} \subset \F(M)$$ 
to be the set of elements generally called \textit{molecules}. 

\begin{lemma}\label{lemma:AbsConvHull}
	Let $M$ be a pointed metric space. Then $B_{\F(M)} = \overline{\aco}  \mathcal{M}$. 
\end{lemma}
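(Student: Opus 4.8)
The plan is to prove the two inclusions separately: the inclusion $\overline{\aco}\,\mathcal{M} \subseteq B_{\F(M)}$ is elementary, while the reverse inclusion is obtained by a Hahn--Banach separation argument together with the identification $\F(M)^* \equiv \Lip_0(M)$.

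First I would establish $\overline{\aco}\,\mathcal{M} \subseteq B_{\F(M)}$. Since $\delta \colon M \to \F(M)$ is an isometry, one has $\|\delta(x) - \delta(y)\|_{\F(M)} = d(x,y)$ for all $x \neq y$, so each molecule $d(x,y)^{-1}(\delta(x) - \delta(y))$ has norm exactly $1$ and thus belongs to $B_{\F(M)}$. As $B_{\F(M)}$ is closed and absolutely convex, it contains the closed absolutely convex hull $\overline{\aco}\,\mathcal{M}$, which gives the first inclusion.

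For the reverse inclusion $B_{\F(M)} \subseteq \overline{\aco}\,\mathcal{M}$, I would argue by contradiction. Suppose there is some $\gamma \in B_{\F(M)} \setminus \overline{\aco}\,\mathcal{M}$ and write $C := \overline{\aco}\,\mathcal{M}$, a closed absolutely convex (in particular balanced) set. By the Hahn--Banach separation theorem, there exist $g \in \F(M)^* \equiv \Lip_0(M)$ and $\alpha \in \R$ such that $\re\langle g, z \rangle \leq \alpha < \re\langle g, \gamma \rangle$ for every $z \in C$. The crucial step is then to identify the support functional $\sup_{z \in C} \re\langle g, z \rangle$ with $\|g\|_L$. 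Since $C$ is balanced, for every $z \in C$ one can multiply by a suitable unimodular scalar $\mu$ (so that $\mu z \in C$ and $\re\langle g, \mu z\rangle = |\langle g, z\rangle|$), which yields $\sup_{z \in C} \re\langle g, z \rangle = \sup_{z \in C} |\langle g, z \rangle|$. Furthermore, expanding any $z = \sum_i \lambda_i m_i \in \aco\,\mathcal{M}$ with $\sum_i |\lambda_i| \leq 1$ and passing to the closure shows $\sup_{z \in C} |\langle g, z \rangle| = \sup_{m \in \mathcal{M}} |\langle g, m \rangle| = \sup_{x \neq y} \frac{|g(x) - g(y)|}{d(x,y)} = \|g\|_L$. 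Combining these identities with the separation inequality and the duality bound gives
$$\|g\|_L \leq \alpha < \re\langle g, \gamma \rangle \leq |\langle g, \gamma \rangle| \leq \|g\|_L \, \|\gamma\|_{\F(M)} \leq \|g\|_L,$$
a contradiction. Hence no such $\gamma$ exists and the second inclusion follows.

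The main point requiring care --- more a matter of bookkeeping than of genuine difficulty --- is the handling of complex scalars in the separation step: Hahn--Banach produces a separating \emph{real}-linear functional, which one must realize as the real part of a (complex-)linear functional $g \in \Lip_0(M)$, and the balancedness of $C$ is exactly what allows one to pass from real parts to moduli and thereby recover $\|g\|_L$. Once the dual norm on $\Lip_0(M) = \F(M)^*$ is identified with $\|\cdot\|_L$, the estimate above closes the argument. Note that no completeness assumption on $M$ is needed, as the molecules and the free space are defined for an arbitrary pointed metric space.
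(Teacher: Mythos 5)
Your proof is correct and follows essentially the same route as the paper: the easy inclusion from the fact that molecules lie in the unit ball, and the reverse inclusion by Hahn--Banach separation, using the balancedness of $\overline{\aco}\,\mathcal{M}$ to pass from real parts to moduli and then identifying $\sup_{m\in\mathcal{M}}|\langle g,m\rangle|$ with $\|g\|_L$ to reach the contradiction $\|g\|_L<\|g\|_L$. The paper merely streamlines this by normalizing the separating functional to lie in $S_{\Lip_0(M)}$, so that the contradiction reads $1>1$; no substantive difference.
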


\begin{proof}
	The inclusion $\overline{\aco} \mathcal{M} \subset B_{\F(M)}$ is obvious. Conversely, assume that there exists $\gamma \in B_{\F(M)} \setminus \overline{\aco} \mathcal{M}$. By the Hahn--Banach separation theorem, there exists $f \in S_{\Lip_0(M)}$ such that 
	$$\re(\langle f , \gamma \rangle )  > \sup \{\re(\langle f,\mu\rangle)  \; : \; \mu \in \overline{\aco} \mathcal{M} \}).$$
	For $\mu \in \overline{\aco} \mathcal{M}$, write $\langle f,\mu\rangle = re^{i\theta}$. Note that $e^{-i\theta}\mu \in \overline{\aco} \mathcal{M}$ and
	$\langle f, e^{-i\theta}\mu\rangle = r = |\langle f,\mu\rangle|.$ 
	Hence 
	$$
	\sup \{\re(\langle f,\mu\rangle)  \; : \; \mu \in \overline{\aco} \mathcal{M} \}) = \sup \{|\langle f,\mu\rangle|  \; : \; \mu \in \overline{\aco} \mathcal{M} \}) =\|f\| = 1.
	$$
	Finally,
	$
	1 = \|f\| \geq \re(\langle f , \gamma \rangle ) > 1,
	$
	which is a contradiction.
\end{proof}

Before stating the main result of this section, let us introduce some notation. For $x,y \in M$ such that $x \neq y$, we consider
\begin{align*}
	A(x,y) &= \frac{1}{d(x,y)}|w(x)d(f(x),0) - w(y)d(f(y),0)|,  \\
	B(x,y) &= \frac{1}{d(x,y)}|w(x)d(f(x),0) - w(y)(d(f(x),0) - d(f(x),f(y))|.
\end{align*}
Following the characterization obtained in \cite[Theorem~2.1]{GolMa22}, we also introduce 
\begin{align*}
	\sigma(x,y) &= \frac{d(f(x),f(y))}{d(x,y)}(s(x,y)|w(x)|+s(y,x)|w(y)|),\\
	\tau(x,y)   &= \frac{|w(x)-w(y)|}{d(x,y)} \min\{d(f(x),0) , d(f(y),0) \}, 
\end{align*}
where $s(x,y) = 1$ if $d(f(x),0)\geq d(f(y),0)$, and $s(x,y) = 0$ otherwise.

\begin{theorem} \label{Poids}
	Let $M,N$ be pointed metric spaces. Let $w : M \to \C$ and $f : M \to N$ be any maps such that $f(0_M)=0_N$ or $w(0_M)=0$. 
	The following assertions are equivalent:
	\begin{enumerate}[itemsep = 4pt]
		\item[$(i)$]  $w \widehat{f}$ extends to a bounded operator from  $\F(M)$ to $\F(N)$;
		\item[$(ii)$] $wC_f$ defines a bounded operator from $\Lip_0(N)$ to $\Lip_0(M)$ (and $wC_f = (w \widehat{f})^*$);
        \item[$(iii)$] For every $g \in \Lip_0(N)$, $wC_f(g) \in \Lip_0(M)$.
		\item[$(iv)$] $\varphi : x \in M \mapsto w(x)\delta(f(x)) \in \F(N)$ is Lipschitz (and $\overline{\varphi} = w \widehat{f}$);
		\item[$(v)$] $A:= \displaystyle \sup_{x \neq y} A(x,y) < \infty$ and $B := \displaystyle \sup_{x \neq y} B(x,y) < \infty$;
		\item[$(vi)$]  $\sigma:= \displaystyle \sup_{x \neq y} \sigma(x,y) < \infty$ and $\tau:= \displaystyle \sup_{x \neq y} \tau(x,y) < \infty$.
	\end{enumerate}
In that case, one has $\frac{1}{\sqrt{2}}\max(A,B) \leq \|wC_f\| = \|w\widehat{f} \| \leq \sqrt{2}\max(A,B).$
\end{theorem}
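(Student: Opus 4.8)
The plan is to establish the equivalences through the cycle $(iv)\Rightarrow(i)\Rightarrow(ii)\Rightarrow(iii)\Rightarrow(iv)$, treat $(iv)\Leftrightarrow(v)$ and $(v)\Leftrightarrow(vi)$ separately, and track norms throughout so the final estimate drops out at the end. Note first that $\varphi : x \mapsto w(x)\delta(f(x))$ sends $0_M$ to $0$, since $f(0_M)=0_N$ or $w(0_M)=0$. Thus if $(iv)$ holds then $\varphi \in \Lip_0(M,\F(N))$ and the universal extension property (Theorem~\ref{thm1}, with $X=\F(N)$) furnishes a bounded $\overline\varphi:\F(M)\to\F(N)$ with $\|\overline\varphi\|=\|\varphi\|_L$; comparing on $\lspan\delta(M)$ shows $\overline\varphi = w\widehat f$, giving $(i)$ with $\|w\widehat f\|=\|\varphi\|_L$. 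For $(i)\Rightarrow(ii)$ I would invoke the computation from the introduction giving $(w\widehat f)^\ast = wC_f$ on $\lspan\delta(M)$: the adjoint of the bounded operator $w\widehat f$ is the bounded operator $wC_f:\Lip_0(N)\to\Lip_0(M)$ with $\|wC_f\|=\|w\widehat f\|$. The implication $(ii)\Rightarrow(iii)$ is immediate. For $(iii)\Rightarrow(iv)$ I would first apply the closed graph theorem to get boundedness of $wC_f$: if $g_n\to g$ and $wC_f(g_n)\to h$ in $\Lip_0$, then $\|\cdot\|_L$-convergence forces pointwise convergence (because any $\psi\in\Lip_0$ satisfies $|\psi(z)|\le\|\psi\|_L\, d(z,0)$), whence $h=wC_f(g)$. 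Once $wC_f$ is bounded, the duality $\F(N)^\ast=\Lip_0(N)$ yields
\[
\frac{\|\varphi(x)-\varphi(y)\|}{d(x,y)} = \sup_{g\in B_{\Lip_0(N)}}\frac{|wC_f(g)(x)-wC_f(g)(y)|}{d(x,y)} \le \|wC_f\|,
\]
so $\varphi$ is Lipschitz with $\|\varphi\|_L\le\|wC_f\|$. Chaining these inequalities gives $\|\varphi\|_L=\|w\widehat f\|=\|wC_f\|\ge\|\varphi\|_L$, so all three norms coincide.

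For $(iv)\Leftrightarrow(v)$ and the displayed estimate I would compute $\|\varphi(x)-\varphi(y)\|_{\F(N)}$ via Lemma~\ref{lemma:formula} applied \emph{in} $\F(N)$, with $\lambda_1=w(x)$, $\lambda_2=-w(y)$ and the two points $f(x),f(y)$. A direct inspection identifies the three quantities inside the maximum with $d(x,y)A(x,y)$, $d(x,y)B(x,y)$ and $d(x,y)B(y,x)$ (the last using $d(f(x),f(y))=d(f(y),f(x))$), so that
\[
\tfrac{1}{\sqrt 2}\max\{A(x,y),B(x,y),B(y,x)\} \le \frac{\|\varphi(x)-\varphi(y)\|}{d(x,y)} \le \sqrt 2\,\max\{A(x,y),B(x,y),B(y,x)\}.
\]
Taking the supremum over $x\neq y$ and using $\sup_{x\neq y}B(y,x)=B$ gives $\tfrac{1}{\sqrt 2}\max(A,B)\le\|\varphi\|_L\le\sqrt 2\max(A,B)$; in particular $\varphi$ is Lipschitz if and only if $A,B<\infty$. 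Combined with $\|\varphi\|_L=\|wC_f\|=\|w\widehat f\|$, this is precisely the final norm estimate.

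The remaining equivalence $(v)\Leftrightarrow(vi)$ is the part I expect to be the main obstacle, as it is a purely algebraic comparison of the two families of quantities with complex weights. I would fix $x\neq y$, assume without loss of generality $d(f(x),0)\ge d(f(y),0)$ (the max of $\{A,B(x,y),B(y,x)\}$ and of $\{\sigma,\tau\}$ are both symmetric under swapping $x,y$), and abbreviate $p=d(f(x),0)\ge q=d(f(y),0)$, $r=d(f(x),f(y))$, $u=w(x)$, $v=w(y)$, $\delta=u-v$. The triangle inequality gives the two crucial bounds $0\le p-q\le r$ and $|p-r|\le q$. Writing $d(x,y)A(x,y)=|u(p-q)+\delta q|$, $d(x,y)B(x,y)=|ur+\delta(p-r)|$, $d(x,y)B(y,x)=|ur-\delta q|$, while $d(x,y)\sigma(x,y)=|u|r$ and $d(x,y)\tau(x,y)=|\delta|q$, the upper bound $\max\{A,B(x,y),B(y,x)\}\le 2\max\{\sigma,\tau\}$ follows termwise from the two crucial bounds. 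For the reverse, the identity $\big(u(p-q)+\delta q\big)+\big(ur-\delta q\big)=u(r+p-q)$ gives $|u|r\le|u|(r+p-q)\le d(x,y)\big(A(x,y)+B(y,x)\big)$, and then $|\delta|q=|ur-(ur-\delta q)|\le|u|r+d(x,y)B(y,x)$ yields $\max\{\sigma,\tau\}\le 3\max\{A,B(x,y),B(y,x)\}$. Taking suprema shows $A,B<\infty\Leftrightarrow\sigma,\tau<\infty$, completing the proof. The only real care needed is the bookkeeping of the symmetric roles of $x$ and $y$ and the repeated use of the two triangle-inequality bounds.
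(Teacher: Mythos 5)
Your proof is correct, and it is organized differently from the paper's. The paper proves $(i)\Leftrightarrow(iv)$ by showing via Lemma~\ref{lemma:AbsConvHull} that $B_{\F(M)}$ is the closed absolutely convex hull of the molecules, so that boundedness of $w\widehat f$ is equivalent to its uniform boundedness on molecules, and it proves $(ii)\Rightarrow(i)$ by checking that $wC_f$ is pointwise-to-pointwise continuous on bounded sets and invoking the Banach--Dieudonn\'e theorem to produce a pre-adjoint. You avoid both of these tools: your cycle $(iv)\Rightarrow(i)\Rightarrow(ii)\Rightarrow(iii)\Rightarrow(iv)$ gets $(iv)\Rightarrow(i)$ directly from the universal extension property of $\F(M)$ applied to $\varphi\in\Lip_0(M,\F(N))$, and closes the loop with a closed-graph argument followed by the elementary duality computation $\|\varphi(x)-\varphi(y)\|=\sup_{g\in B_{\Lip_0(N)}}|wC_f(g)(x)-wC_f(g)(y)|$. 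This is a slightly more economical route (no separation argument, no Banach--Dieudonn\'e), at the cost of not exhibiting $w\widehat f$ explicitly as the pre-adjoint of $wC_f$ when one starts from $(ii)$; the paper's two-way argument buys that extra piece of information, and its molecule lemma is reused later for the compactness results. Your treatment of $(iv)\Leftrightarrow(v)$ and of the norm estimate via Lemma~\ref{lemma:formula} is the same as the paper's, and your algebra for $(v)\Leftrightarrow(vi)$ is essentially the paper's Lemma~\ref{inequalities} (your identity $\bigl(u(p-q)+\delta q\bigr)+\bigl(ur-\delta q\bigr)=u(r+p-q)$ is their identity $(\star)$ in disguise).

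One small slip worth flagging: when $d(f(x),0)=d(f(y),0)$ you have $s(x,y)=s(y,x)=1$, so $d(x,y)\sigma(x,y)=\bigl(|u|+|v|\bigr)r$ rather than $|u|r$. Your upper bounds on $A$, $B(x,y)$, $B(y,x)$ survive (they only use $|u|r\le d(x,y)\sigma(x,y)$), but in the reverse direction you must also control $|v|r$, e.g.\ by the symmetric estimate $|v|r\le d(x,y)\bigl(A(x,y)+B(x,y)\bigr)$, which worsens your constant $3$ to $4$ in this degenerate case. Since only finiteness of the suprema is at stake in $(v)\Leftrightarrow(vi)$, this does not affect the validity of the equivalence; the paper handles the same case separately in the proof of Lemma~\ref{inequalities}.
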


It is worth mentioning that in the case $\K = \R$ one has $\|wC_f\| = \|w\widehat{f} \| = \max(A,B).$

\begin{proof}
	$(i) \implies (ii)$. If $(w\widehat{f})^*: \Lip_0(N) \to \Lip_0(M)$ is the adjoint of $w\widehat{f}$ then:
	\begin{align*}
		\forall g \in \Lip_0(N), \; \forall x \in M, \quad (w \widehat{f})^*(g)(x) &= \langle (w \widehat{f})^*(g) , \delta(x) \rangle = \langle g , w \widehat{f}(\delta(x)) \rangle \\
		&= \langle g , w(x) \delta(f(x)) \rangle = w(x) g(f(x)) \\
		&= wC_f(g)(x).
	\end{align*}
	Therefore $(w\widehat{f})^* = wC_f$ and so $wC_f$ is bounded.
	\medskip
	
	$(ii) \implies (i)$. We may proceed similarly. Indeed, it is immediate from the definitions that $wC_f(g_i) \to wC_f(g)$ pointwise whenever $g_i \to g$ pointwise in $B_{Lip_0(N)}$. Thus $wC_f$ is continuous for the topology of pointwise convergence on bounded subsets of $\Lip_0(N)$. So it is weak$^*$-to-weak$^*$ continuous by the Banach-Dieudonn\'e theorem. As in the previous implications, it is readily seen that the pre-adjoint operator of $wC_f$ is equal to $w\widehat{f}$. 
	\medskip

    $(ii) \iff (iii)$ is a straightforward application of the closed graph theorem. 
	\medskip 
 
	$(i) \iff (iv)$. By Lemma \ref{lemma:AbsConvHull}, $B_{\F(M)} = \overline{\aco} \mathcal{M}$ where $\mathcal{M}$ is the set of molecules. Hence, $w \widehat{f}$ is bounded if and only if it is uniformly bounded on molecules.  That is, $w \widehat{f}$ is bounded if and only if $\sup_{x\neq y} \|w \widehat{f} (m_{x y})\|_{\F(N)} < \infty$. Now the desired equivalence follows the next observation
	$$
	\|w \widehat{f} (m_{x y})\|= \frac{\|w(x) \delta(f(x)) - w(y) \delta(f(y))\|}{d(x,y)} = \frac{\|\varphi(x) - \varphi(y)\|}{d(x,y)}.
	$$
    Moreover, if $\varphi : x \in M \mapsto w(x)\delta(f(x)) \in \F(N)$ is Lipschitz, then its unique extension to $\F(M)$ using Theorem~\ref{thm1} verifies $\overline{\varphi} = w \widehat{f}$.
	\medskip
	
	To conclude $(iv) \iff (v)$ is an easy consequence of Lemma~\ref{lemma:formula} and $(v) \iff (vi)$ follows from the next lemma.
\end{proof}

\begin{lemma} \label{inequalities}
	For every $x \neq y \in M$, 
	\begin{enumerate}[$(i)$]
		\item $\sigma(x , y) \leq 2\big(A(x , y)+\max\big(B(x , y) , B(y, x)\big)\big)$;
		\item $ \tau(x ,y) \leq A(x , y) + \sigma(x ,y)$;
		\item $A(x,y) \leq \sigma(x,y) + \tau(x,y)$
		\item $\max\big(B(x , y) , B(y, x)\big) \leq A(x,y) + 2 \sigma(x,y)$
	\end{enumerate}
\end{lemma}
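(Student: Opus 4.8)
The plan is to clear the common factor $1/d(x,y)$ from all six quantities and then reduce everything to elementary scalar estimates. Abbreviate $a:=d(f(x),0)$, $b:=d(f(y),0)$, $c:=d(f(x),f(y))$, $u:=w(x)$, $v:=w(y)$; after multiplying through by $d(x,y)$ the relevant numerators are $|ua-vb|$ (for $A$), $|ua-v(a-c)|$ (for $B(x,y)$), $|vb-u(b-c)|$ (for $B(y,x)$), $c(s(x,y)|u|+s(y,x)|v|)$ (for $\sigma$) and $|u-v|\min(a,b)$ (for $\tau$). All the geometric input is contained in two applications of the reverse triangle inequality, namely $|a-b|\le c$ (from $f(x),f(y),0_N$) and $|a-c|\le b$ (comparing $0_N$ and $f(y)$ from $f(x)$). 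Since $A$, $\sigma$, $\tau$ and $\max(B(x,y),B(y,x))$ are all symmetric under $x\leftrightarrow y$, I may assume $a\ge b$ whenever convenient, in which case $s(x,y)=1$, $\min(a,b)=b$, and $\sigma\ge c|u|$.

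For $(iii)$ and $(ii)$ the engine is the single additive splitting $ua-vb=u(a-b)+(u-v)b$. Taking absolute values and using $|a-b|\le c$ together with $\sigma\ge c|u|$ and $\tau=|u-v|b$ gives $(iii)$ in one line; rearranging the same identity as $(u-v)b=(ua-vb)-u(a-b)$ and estimating the same way gives $(ii)$.

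For $(iv)$ I handle the two summands of the maximum separately. The bound $B(y,x)\le A+2\sigma$ follows directly from the splitting $vb-u(b-c)=(vb-ua)+u(a-b+c)$ and the observation $0\le a-b+c\le 2c$. The term $B(x,y)$ is the delicate one: the correct manoeuvre is the decomposition $ua-v(a-c)=uc+(u-v)(a-c)$, which together with $|a-c|\le b$ yields the intermediate estimate $B(x,y)\le\sigma+\tau$; invoking $(ii)$ then upgrades this to $B(x,y)\le A+2\sigma$. Finally, for $(i)$ I abandon splittings and instead pair terms through the triangle inequality in the form $|p|+|q|\ge|p-q|$, choosing signs so that $A+B(y,x)\ge|u|(c+a-b)$ and $A+B(x,y)\ge|v|(c+b-a)$; both are unconditional. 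Reading off the two cases of the indicator $s$ and adding the two estimates produces $\sigma\le 2A+B(x,y)+B(y,x)\le 2(A+\max(B(x,y),B(y,x)))$.

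I expect the main obstacle to be the asymmetry of $B$: once $a\ge b$ is fixed, $B(x,y)$ and $B(y,x)$ play genuinely different roles, and a naive triangle-inequality bound on $B(x,y)$ throws up a spurious $|v|c$ term instead of the needed $|u|c\le\sigma$. The resolution is the non-obvious decomposition $ua-v(a-c)=uc+(u-v)(a-c)$, which isolates the correct weight $u$, followed by the detour through $(ii)$. Pinning down the sign choices in the two paired estimates for $(i)$ is the only other point requiring care; everything else is routine bookkeeping with the two reverse triangle inequalities.
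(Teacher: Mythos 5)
Your proof is correct and follows essentially the same route as the paper's: the same scalar decompositions of the numerators combined with the two reverse triangle inequalities $|a-b|\le c$ and $|a-c|\le b$ (the paper normalizes with $d(f(x),0)\le d(f(y),0)$ rather than your $a\ge b$, which is immaterial). Your handling of the delicate half of $(iv)$ via the intermediate bound $B(x,y)\le\sigma+\tau$ and item $(ii)$ is in fact slightly more explicit than the paper, which obtains one of the two bounds directly from its identity $(\star)$ and dismisses the other with ``in the same way''.
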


\begin{proof}
Let $x\neq y \in M$. Without loss of generality, we may assume that $d(f(x) , 0) \leq d(f(y),0)$.
If $d(f(x) , 0) < d(f(y),0)$ then 
\begin{align*}
	\sigma(x,y) &= |w(y)| \dfrac{d(f(x),f(y))}{d(x,y)} < |w(y)| \dfrac{d(f(y),0) - d(f(x),0) + d(f(x),f(y))}{d(x,y)}.
\end{align*}
Now write
\begin{align*}
	& w(y) \dfrac{d(f(y),0) - d(f(x),0) + d(f(x),f(y))}{d(x,y)} \\
	& \underset{(\star)}{=} \dfrac{w(x)d(f(x),0) - w(y)(d(f(x),0)-d(f(x),f(y)))}{d(x,y)} -\dfrac{w(x)d(f(x),0) - w(y)(d(f(y),0)}{d(x,y)}. 
\end{align*}
It is then clear that $\sigma(x,y) \leq A(x,y) + B(x,y) \leq A(x,y) + \max(B(x,y) , B(y,x))$. If $d(f(x) , 0) = d(f(y),0)$ then we obtain $\sigma(x,y) \leq 2\big(A(x , y)+\max\big(B(x , y) , B(y, x)\big)\big)$ in a similar way.
\smallskip

We handle $\tau(x,y)$ as follows:
\begin{align*}
	\tau(x,y)  &= \frac{|w(x)-w(y)|}{d(x,y)}d(f(x),0) \\
	&\leq \frac{|w(x)d(f(x),0)-w(y)d(f(y),0)|}{d(x,y)} + |w(y)|\frac{|d(f(y),0) - d(f(x),0)|}{d(x,y)} \\
	&\leq A(x,y) + |w(y)|\frac{|d(f(x),f(y))|}{d(x,y)} \leq A(x,y) + \sigma(x,y).
\end{align*}
Next, the triangle inequality yields:
\begin{align*}
	A(x,y) &\leq d(f(x),0) \frac{|w(x)- w(y)|}{d(x,y)} + |w(y)|\frac{|d(f(x),0) - d(f(y),0)|}{d(x,y)} \\
		& \leq \tau(x,y) + \sigma(x,y).
\end{align*}
Finally, the equality $(\star)$ implies that $B(x,y) \leq A(x,y) + 2\sigma(x,y)$ and the inequality for $B(y,x)$ is obtained in the same way. 
\end{proof}

\begin{remark} \label{remarkbounded}~
	\begin{enumerate}[leftmargin = *, itemsep=5pt]
     \item If $w \equiv 1$ then $(iv)$ and $(v)$ in Theorem~\ref{Poids} are equivalent to $f$ being simply a Lipschitz map. Nevertheless $f$ needs not be Lipschitz in general; see e.g. the example below. 
	\item It is clear that the conditions in $(iv)$ and $(v)$ in Theorem~\ref{Poids} are implied by 
		$$N_1:=\displaystyle \sup_{x \neq y} |w(x)| \frac{d(f(x) , f(y))}{d(x,y)} < \infty$$
		and 
		$$N_2:= \displaystyle  \sup_{x \neq y} d(f(x) ,0 ) \frac{|w(x) - w(y)|}{d(x,y)}< \infty .$$
		In fact, if $(iv)$ (or $(v)$) is satisfied, then $N_1 < \infty$ if and only if $N_2 < \infty$. However, the two latter conditions together are strictly stronger than $(v)$.
		For instance, consider $M = N = \{0\} \cup [1,+\infty[$ with the metric inherited from $\R$. Let $f(x) = x^2$ while $w(x) = \frac{1}{x}$ if $x \neq 0$, $w(0)=0$ otherwise. Now observe that
		$$|w(1)|\frac{d(f(1),f(n))}{d(1,n)} = \frac{|n^2-1|}{|n-1|} \underset{n \to +\infty}{\longrightarrow} +\infty \implies N_1 = +\infty. $$
		Nevertheless, if $0<x < y$ then 
		\begin{align*}
			|w(y)|\frac{d(f(x),f(y))}{d(x,y)} &= \frac{1}{y} \frac{y^2-x^2}{y-x} = \frac{y+x}{y} \leq \frac{2y}{y} = 2, \\
			|w(y)|\frac{d(f(0),f(y))}{d(0,y)} &= \frac{1}{y} \frac{y^2}{y} \leq 1.
		\end{align*}
		We deduce that $\sigma \leq 2$. Lastly:
		\begin{align*}
			|d(f(x),0)|\frac{|w(x)-w(y)|}{d(x,y)} &= x^2 \frac{\frac{1}{x}-\frac{1}{y}}{y-x} = \frac{x^2}{xy} \leq \frac{x}{y} \leq 1 \implies \tau \leq 1.
		\end{align*}
	\end{enumerate}
\end{remark}
\medskip

Using Section~\ref{Section1.2}, we deduce the next simpler statement for the Lip variant of Theorem~\ref{Poids}. It improves \cite[Theorem~2.1]{GolMa16} where only compact metric spaces are considered. 

\begin{corollary}
Let $M,N$ be  metric spaces. Let $w : M \to \C$ and $f : M \to N$ be any maps.    Consider the composition operator $wC_f : \Lip(N) \to \C^M$ given by $$\forall g \in \Lip(N), \;  \forall x \in M, \quad wC_f(g)(x)  = w(x) \cdot g ( f(x) ).$$
    Then $wC_f$ is a bounded operator from $\Lip(N)$ to $\Lip(M)$ if and only if $w$ is bounded, Lipschitz and 
    $$N_1:=\sup_{x \neq y} |w(x)| \frac{d(f(x) , f(y))}{d(x,y)} < \infty.$$
\end{corollary}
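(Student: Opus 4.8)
The plan is to transport the statement into the $\Lip_0$ world and then read it off from Theorem~\ref{Poids}. First I would invoke the first reduction of Section~\ref{Section1.2} to normalize the metrics, assuming once and for all that $\diam(M)\le 2$ and $\diam(N)\le 2$; this changes neither $\Lip(M)$, $\Lip(N)$, the maps $w,f$, nor (for a bounded weight) the three conditions in the statement, and it ensures that the metrics are already capped so that no truncation will appear. Following Proposition~\ref{Lip0toLip}, I would then form $M^e=M\cup\{e\}$ and $N^e=N\cup\{e\}$ with base points $0_{M^e}=0_{N^e}=e$ and $d^e(\cdot,e)\equiv 1$, extend $f,w$ by $f^e(e)=e$, $w^e(e)=0$, and use that $wC_f:\Lip(N)\to\Lip(M)$ is bounded if and only if $w^eC_{f^e}:\Lip_0(N^e)\to\Lip_0(M^e)$ is bounded. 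Since $f^e(0_{M^e})=0_{N^e}$, Theorem~\ref{Poids} applies, and its item $(vi)$ says that $w^e\widehat{f^e}$ is bounded if and only if $\sigma:=\sup_{u\neq v\in M^e}\sigma(u,v)<\infty$ and $\tau:=\sup_{u\neq v\in M^e}\tau(u,v)<\infty$, with $\sigma,\tau,s$ now computed with respect to $d^e,f^e,w^e$ and the base point $e$.

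The heart of the argument is a simple geometric observation that trivializes all of the bookkeeping: every point of $N$ sits at distance exactly $1$ from the base point $e$ of $N^e$, so $d^e(f^e(x),0_{N^e})=1$ for every $x\in M$. Consequently the selector satisfies $s(x,y)=s(y,x)=1$ for all pairs of points of $M$, and the minimum occurring in $\tau$ equals $1$ on such pairs. I would then record the four elementary evaluations: for $x,y\in M$ with $x\neq y$,
$$\sigma(x,y)=\bigl(|w(x)|+|w(y)|\bigr)\frac{d(f(x),f(y))}{d(x,y)},\qquad \tau(x,y)=\frac{|w(x)-w(y)|}{d(x,y)},$$
whereas $\sigma(x,e)=|w(x)|$ and $\tau(x,e)=0$, the latter because $d^e(f^e(e),0_{N^e})=0$ kills the minimum.

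From these formulas the equivalence falls out. The pairs $(x,e)$ give $\sup_{x\in M}\sigma(x,e)=\|w\|_\infty$, so finiteness of $\sigma$ already forces $w$ to be bounded; and the remaining part of $\sigma$, being symmetric in $x,y$, is squeezed between $N_1$ and $2N_1$, so that (once $w$ is bounded) $\sigma<\infty$ if and only if $N_1<\infty$. On the other hand $\tau(x,e)=0$ leaves only the $M$-pairs, whence $\tau=\sup_{x\neq y}|w(x)-w(y)|/d(x,y)$ is exactly the Lipschitz constant of $w$, so $\tau<\infty$ if and only if $w$ is Lipschitz. Combining, $w^eC_{f^e}$ is bounded precisely when $w$ is bounded, Lipschitz and $N_1<\infty$, which is the assertion.

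I do not anticipate a genuine difficulty once the set-up is in place; the only point that demands real care is the normalization $\diam\le 2$. Indeed, in the $\Lip$ setting (as opposed to $\Lip_0$) a non-Lipschitz $f$ may still yield a bounded $wC_f$, since bounded test functions cannot resolve distances in $N$ beyond the cap. It is therefore essential that, after the reduction of Section~\ref{Section1.2}, the quantity $N_1$ is measured against the capped metric; this is exactly what makes the clean condition $N_1<\infty$ the correct one and is the subtle step underlying the whole statement.
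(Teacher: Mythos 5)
Your argument is correct and follows the paper's own proof essentially verbatim: reduce to $\Lip_0(N^e)\to\Lip_0(M^e)$ via Proposition~\ref{Lip0toLip}, apply Theorem~\ref{Poids}~$(vi)$, and exploit $d^e(\cdot,e)\equiv 1$ to get $s\equiv 1$, so that $\sigma<\infty$ amounts to $\|w\|_\infty<\infty$ together with $N_1<\infty$ while $\tau<\infty$ amounts to $w$ being Lipschitz. The only cosmetic difference is that you truncate the metrics to diameter $2$ at the outset, whereas the paper keeps the original metric and absorbs the truncation into the case analysis $d(x,y)\le 2$ versus $d(x,y)>2$ when computing $\tau$; your closing remark that $N_1$ must be read against the capped metric is precisely the convention the paper adopts implicitly through Section~\ref{Section1.2}.
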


\begin{proof}
    According to Proposition~\ref{Lip0toLip}, $wC_f$ is a bounded operator from $\Lip(N)$ to $\Lip(M)$ if and only if $w^eC_{f^e}$ is a bounded operator from $\Lip_0(N^e)$ to $\Lip_0(M^e)$. Thanks to Theorem~\ref{Poids}, $w^eC_{f^e}$ is bounded if and only if 
    \begin{align*}
	\sigma = \sup_{x\neq y} \sigma(x,y) &= \sup_{x\neq y}\frac{d^e(f^e(x),f^e(y))}{d^e(x,y)}(s(x,y)|w^e(x)|+s(y,x)|w^e(y)|) < \infty, \text{ and }\\
	\tau = \sup_{x\neq y} \tau(x,y)   &= \sup_{x\neq y} \frac{|w^e(x)-w^e(y)|}{d^e(x,y)} \min\{d^e(f^e(x),e) , d^e(f^e(y),e) \} < \infty.
\end{align*}
    Since $d(z,e)=1$ for every $z\in N$, we get $s(x,y)=s(y,x)=1$ for every $x\neq y \in M$. Thus it follows fairly easily that $\sigma < \infty$ if and only if $N_1 < \infty$. Besides, taking the next observations into account, one can finish the proof in an obvious way. First, note that letting $y=e$ in $\sigma(x,y)$ gives
    $$ \sigma(x,e) =  \frac{d^e(f^e(x),f^e(e))}{d^e(x,e)}|w^e(x)| = |w(x)|.$$
    Thus $w$ is bounded in any case. Second, if $x \neq y \in M$ then:
    \begin{align*}
        d(x,y) \leq 2 \implies d^e(x,y) = d(x,y) \implies \tau(x,y) = \frac{|w(x) - w(y)|}{d(x,y)};\\
        d(x,y) > 2 \implies d^e(x,y) = 2 \implies \tau(x,y) = \frac{|w(x) - w(y)|}{2} \leq \|w\|_{\infty}.
    \end{align*}
\end{proof}

Finally, the next corollary is \cite[Theorem~2.1]{DA}, it is a direct consequence of Theorem~\ref{Poids} so we leave the details to the reader.  

\begin{corollary}
    Let $M$ be a bounded metric space. If $w$ is Lipschitz and 
		$$\sup_{x \neq y} |w(x)| \frac{d(f(x) , f(y))}{d(x,y)} < \infty$$
  then $wC_f$ is a bounded operator from $\Lip_0(N)$ to $\Lip_0(M)$.
\end{corollary}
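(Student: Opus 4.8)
The plan is to deduce the statement from Theorem~\ref{Poids}, specifically from the equivalence $(ii)\Leftrightarrow(vi)$: it suffices to check that both $\sigma = \sup_{x\neq y}\sigma(x,y)$ and $\tau = \sup_{x\neq y}\tau(x,y)$ are finite. Throughout I write $N_1 := \sup_{x\neq y}|w(x)|\,d(f(x),f(y))/d(x,y)$, which is finite by assumption, and $D := \diam(M) < \infty$. Observe first that on a bounded metric space a Lipschitz weight is automatically bounded and has $\|w\|_L < \infty$; the degenerate case $\|w\|_L = 0$ (i.e. $w$ constant) makes $\tau \equiv 0$, so I may assume $\|w\|_L > 0$.

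The bound on $\sigma$ is immediate and uses only $N_1$. Since $s(x,y), s(y,x) \in \{0,1\}$,
$$\sigma(x,y) \leq \frac{d(f(x),f(y))}{d(x,y)}\big(|w(x)| + |w(y)|\big) \leq 2N_1,$$
where the two terms are controlled by testing the definition of $N_1$ on the ordered pairs $(x,y)$ and $(y,x)$ respectively. Hence $\sigma \leq 2N_1 < \infty$.

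The finiteness of $\tau$ is the crux, and I expect it to be the only real obstacle. Starting from the elementary bound $\tau(x,y) \leq \|w\|_L \min\{d(f(x),0),d(f(y),0)\}$ afforded by the Lipschitz property of $w$, I would argue by contradiction. Suppose there are pairs $x_n \neq y_n$ with $\tau(x_n,y_n)\to\infty$; relabelling within each pair we may assume $\min\{d(f(x_n),0),d(f(y_n),0)\} = d(f(x_n),0)$, so that $\tau(x_n,y_n)\leq \|w\|_L\, d(f(x_n),0)$ forces $d(f(x_n),0)\to\infty$. The key observation is that a bounded domain together with $N_1 < \infty$ forbids the weight from being non-zero at points whose image escapes to infinity: fixing $n$ and letting $m\to\infty$, the reverse triangle inequality gives $d(f(x_n),f(x_m)) \geq d(f(x_m),0) - d(f(x_n),0)\to\infty$, while $d(x_n,x_m)\leq D$; plugging the pair $(x_n,x_m)$ (distinct for $m$ large) into the definition of $N_1$ yields
$$|w(x_n)| \leq \frac{N_1\, d(x_n,x_m)}{d(f(x_n),f(x_m))} \leq \frac{N_1 D}{d(f(x_m),0) - d(f(x_n),0)} \xrightarrow[m\to\infty]{} 0,$$
so $w(x_n)=0$, and the identical computation with $(y_n,x_m)$ gives $w(y_n)=0$. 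But then $\tau(x_n,y_n)=0$ for every $n$, contradicting $\tau(x_n,y_n)\to\infty$. Thus $\tau<\infty$, and together with $\sigma<\infty$ this establishes condition $(vi)$, whence the boundedness of $wC_f$ by Theorem~\ref{Poids}.

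The reason the naive route fails—and why the contradiction argument is needed—is that, unlike the sufficient condition through the auxiliary quantity $N_2$ of Remark~\ref{remarkbounded}, here $f(M)$ need not be bounded (the weight may vanish precisely where $f$ is large), so $\min\{d(f(x),0),d(f(y),0)\}$ is not controlled a priori. The argument above sidesteps this by showing that $N_1 < \infty$ forces $w$ to vanish exactly at the offending points. It is worth noting that it uses all three hypotheses ($\|w\|_L<\infty$, $D<\infty$, and $N_1<\infty$) but never the base-point normalisation, so it treats the cases $f(0_M)=0_N$ and $w(0_M)=0$ uniformly.
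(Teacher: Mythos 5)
Your proof is correct and follows the route the paper intends: the paper itself gives no details, merely asserting that the corollary is a direct consequence of Theorem~\ref{Poids}, and you verify condition $(vi)$ of that theorem ($\sigma\leq 2N_1$ directly, $\tau<\infty$ by contradiction). Your argument for $\tau$ correctly supplies the one non-obvious step — that $N_1<\infty$ together with $\diam(M)<\infty$ forces $w$ to vanish at any point whose image under $f$ can be paired with images escaping to infinity, so the possible unboundedness of $f(M)$ is harmless — which is precisely the detail the paper leaves to the reader.
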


\begin{remark}
    When $M$ is bounded, $w$ needs not be Lipschitz nor bounded for $wC_f$ to be bounded from $\Lip_0(N)$ to $\Lip_0(M)$; an example can be found in \cite{DA} (see Example~2 therein). But, in this case, $\Lip_0(M)$ can be seen as 1-codimensional subspace of $\Lip(M)$ (isomorphically speaking). Precisely, $\Lip(M)= \Lip_0(M) \oplus \lspan (\mathbbm{1})$, where $\mathbbm{1}$ denotes the function constantly equal to 1. Assume now that $M$ and $N$ are both bounded and that $wC_f$ is bounded from $\Lip_0(N)$ to $\Lip_0(M)$. Then $wC_f$ can be extended to become a bounded operator from $\Lip(N)$ to $\Lip(M)$ if and only if $wC_f(\mathbbm{1}) (=w) \in \Lip(M)$.
\end{remark}

\subsection{Injectivity of weighted composition operators}

Recall that a bounded linear operator $T:X\to Y$ between Banach spaces has a dense range if and only if its adjoint $T^* : Y^* \to X^*$ is one-to-one. The following proposition corresponds to \cite[Theorem 3.1]{DA}, with a proof only using the tools of Lipschitz free spaces.

\begin{proposition} \label{injectivity} 	Let $M,N$ be pointed metric spaces. 
A bounded weighted composition operator $\omega C_f:\Lip_0(N)\to \Lip_0(M)$ is injective if and only if $f(\text{coz}(w)) \cup\{0\})$ is dense in $N$.
\end{proposition}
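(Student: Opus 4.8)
The plan is to transfer the question to the pre-adjoint operator $w\widehat{f}$ and exploit the duality dictionary recalled in the introduction. Since $wC_f = (w\widehat{f})^*$ by Theorem~\ref{Poids}$(ii)$, fact $(i)$ from the introduction tells us that $wC_f$ is injective precisely when $w\widehat{f} : \F(M) \to \F(N)$ has dense range. So the entire statement reduces to proving that $w\widehat{f}$ has dense range if and only if $f(\text{coz}(w)) \cup \{0\}$ is dense in $N$.

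The first step is to identify the closed range of $w\widehat{f}$ explicitly. Because $w\widehat{f}$ is bounded and $\lspan \delta(M)$ is dense in $\F(M)$, the closure of its range equals $\overline{\lspan}\{w(x)\delta(f(x)) : x \in M\}$. For $x \notin \text{coz}(w)$ the vector $w(x)\delta(f(x))$ vanishes, while for $x \in \text{coz}(w)$ it is a nonzero scalar multiple of $\delta(f(x))$; hence this closed span equals $\overline{\lspan}\{\delta(y) : y \in f(\text{coz}(w))\} = \F_N(f(\text{coz}(w)) \cup \{0\})$, the last equality holding because $\delta(0_N) = 0$. Thus $w\widehat{f}$ has dense range if and only if $\F_N(f(\text{coz}(w)) \cup \{0\}) = \F(N)$.

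It remains to establish, for a subset $K \subseteq N$ containing the base point, the general fact that $\F_N(K) = \F(N)$ if and only if $K$ is dense in $N$. If $K$ is dense, then every $\delta(y)$ is a norm limit of elements $\delta(y_n)$ with $y_n \in K$, since $\|\delta(y_n) - \delta(y)\| = d(y_n, y)$; hence $\F_N(K) = \F(N)$. For the converse --- which I expect to be the only delicate point --- suppose $K$ is not dense and pick $y_0 \in N \setminus \overline{K}$. As $0_N \in K$, we have $y_0 \neq 0_N$, so $d(y_0, 0_N) > 0$ and we may choose $\ep \in (0, d(y_0,0_N)/4)$ small enough that $B(y_0, 2\ep) \cap K = \emptyset$. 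Applying Lemma~\ref{LemmaConstrctionLipF} at $y_0$ yields $g \in \Lip_0(N)$ with $g = 1$ on $B(y_0, \ep)$ and $g = 0$ on $N \setminus B(y_0, 2\ep)$. Then $g$ vanishes on $K$, so $g \in \F_N(K)^{\perp}$, yet $\langle g, \delta(y_0) \rangle = g(y_0) = 1 \neq 0$, which forces $\F_N(K) \subsetneq \F(N)$. Combining the three steps gives the desired equivalence, and hence the proposition.
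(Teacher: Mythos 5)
Your proof is correct and follows essentially the same route as the paper: reduce via the duality fact $(i)$ to the density of the range of $w\widehat{f}$, identify that range with $\overline{\lspan}\,\delta(f(\mathrm{coz}(w)))$, and invoke the equivalence between density of $\F_N(K)$ in $\F(N)$ and density of $K\cup\{0\}$ in $N$. The only difference is that the paper cites this last equivalence from an earlier reference as ``effortless to check,'' whereas you supply the short self-contained argument via the isometry of $\delta$ and the bump function from Lemma~\ref{LemmaConstrctionLipF}, which is a welcome but inessential addition.
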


\begin{proof}
First, following the proof of Proposition 2.1 in \cite{ACP20}, notice that it is effortless to check that if $L \subset N$, then $\F(L) := \overline{\text{span}}(\delta(L))$ is dense in $\F(N)$ if and only if $L$ is dense in $N \setminus \{ 0 \}$, that is, if and only if $L \cup \{ 0 \}$ is dense in $N$. Next, we have
\begin{align*}
wC_f:\Lip_0(N)\to \Lip_0(M) \ \text{is one-to-one}
& \Longleftrightarrow
 (w\widehat{f})(\F(M)) \ \text{is dense in} \ \F(N) \\
 & \Longleftrightarrow
 (w\widehat{f})(\text{span} ~ \delta(M)) \ \text{is dense in} \ \F(N).
\end{align*}
By definition of $\text{coz}(w)$, we have $(w\widehat{f})(\text{span}(\delta(M))) = \text{span} ~ \delta(f(\text{coz}(w)))$, from which we get
\begin{align*}
wC_f:\Lip_0(N)\to \Lip_0(M) \ \text{is one-to-one}
& \Longleftrightarrow \F(f(\text{coz}(w))) \ \text{is dense in} \ \F(N) \\
 & \Longleftrightarrow f(\text{coz}(w)) \cup \{ 0 \} \ \text{is dense  in} \ N,
\end{align*}
and this concludes the proof.
\end{proof}

\subsection{Surjectivity of weighted composition operators}

Finally, we characterize the surjectivity of $wC_f$ for general metric spaces. 
The following proposition should be compared with \cite[Theorem 3.5]{GolMa16} where a characterization is provided when the space $M$ is compact. Note that the characterization below is less easy to check than the one in the aforementioned theorem. However, for a general metric space $M$, it seems that there was no known characterization. For partial results, we refer to \cite[Theorem 3.2]{DA} where a sufficient condition is given while necessary conditions are provided in \cite[Theorem 3.4]{DA}.

\begin{proposition} \label{surjectivity}
Let $M,N$ be pointed metric spaces. Let $w : M \to \C$ and $f : M \to N$ be a map such that $f(0)=0$ or $w(0)=0$. Assume that $wC_f:\Lip_0(N)\to \Lip_0(M)$ is bounded. Then, the following assertions are equivalent:
\begin{enumerate}[itemsep = 4pt]
\item[$(i)$] $\omega C_f:\Lip_0(N)\to \Lip_0(M)$ is surjective.
\item[$(ii)$] $w$ does not vanish on $M\setminus \{ 0 \}$, $f$ is injective and, writing $\widetilde{w} : f(M) \to \C$ the weight defined by $\widetilde{w}(z) = \dfrac{1}{w(f^{-1}(z))}$ if $z \neq 0$ and $\widetilde{w}(0)=0$, the operator $\widetilde{w} \widehat{f^{-1}} : \F(f(M)) \to \F(M)$ is bounded.
\item[$(iii)$] $w$ does not vanish on $M\setminus \{ 0 \}$, $f$ is injective,
$$
\sup_{x\neq y}  \ \frac{1}{d(f(x),f(y))} \left| \dfrac{d(x,0)}{w(x)} - \dfrac{d(y,0)}{w(y)} \right| < +\infty, \text{ and}
$$
$$
\sup_{x\neq y} \ \frac{1}{d(f(x),f(y))} \left| \dfrac{d(x,0)}{w(x)} - \dfrac{d(x,0) - d(x,y)}{w(y)}  \right| < +\infty,
$$
with the convention $\dfrac{1}{w(x)} = 0$ if $x=0$.
\end{enumerate}
\end{proposition}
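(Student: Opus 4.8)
The plan is to reduce everything to the pre-adjoint relation $(w\widehat{f})^* = wC_f$ together with the general fact $(ii)$ from the introduction: $wC_f$ is surjective if and only if $w\widehat{f} : \F(M) \to \F(N)$ is an isomorphism onto its range, that is, if and only if $w\widehat{f}$ is bounded below. So the entire content of $(i) \iff (ii)$ is an analysis of when $w\widehat{f}$ is bounded below, while $(ii) \iff (iii)$ will turn out to be nothing more than Theorem~\ref{Poids} applied to a suitable operator. Throughout I may assume $f(0_M)=0_N$, which is the only case compatible with $0_N \in f(M)$ (needed to make sense of $\widetilde{w}(0)=0$); the remaining base-point bookkeeping is routine.

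First I would extract the injectivity constraints. Since bounded below forces injectivity, I show that injectivity of $w\widehat{f}$ already yields that $w$ does not vanish on $M \setminus \{0\}$ and that $f$ is injective. Indeed, if $w(x_0)=0$ for some $x_0 \neq 0$, or if $f(x_0)=0_N$ for some $x_0 \neq 0$, then $\delta(x_0) \in \ker(w\widehat{f}) \setminus \{0\}$; and if $f(x)=f(y)$ for distinct $x,y \in M \setminus \{0\}$, then the nonzero element $w(y)\delta(x) - w(x)\delta(y)$ lies in the kernel, using that neither weight vanishes. Conversely, under these two conditions $w\widehat{f}$ is injective on $\lspan \delta(M)$, and its algebraic inverse there is exactly $\widetilde{w}\widehat{f^{-1}}$, since $\widetilde{w}\widehat{f^{-1}}\big(w\widehat{f}(\delta(x))\big) = w(x)\,\widetilde{w}(f(x))\,\delta(x) = \delta(x)$.

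The heart of the argument is the equivalence: $w\widehat{f}$ is bounded below if and only if $\widetilde{w}\widehat{f^{-1}}$ is bounded. If $\widetilde{w}\widehat{f^{-1}}$ is bounded, then the identity $\widetilde{w}\widehat{f^{-1}} \circ w\widehat{f} = \mathrm{Id}$, valid on the dense subspace $\lspan \delta(M)$ and hence by continuity on all of $\F(M)$, gives $\|\gamma\| \leq \|\widetilde{w}\widehat{f^{-1}}\|\,\|w\widehat{f}(\gamma)\|$, so $w\widehat{f}$ is bounded below. For the converse, I note that the closed range of $w\widehat{f}$ equals $\F_N(f(M)) = \overline{\lspan}\,\delta(f(M))$ (because $w$ is non-vanishing, each $\delta(f(x))$ is already in the range), and that on $\lspan \delta(f(M))$ the bounded-below estimate for $w\widehat{f}$ transfers verbatim into a boundedness estimate for its inverse $\widetilde{w}\widehat{f^{-1}}$, measured in the norm inherited from $\F(N)$. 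The point requiring care is that $\widetilde{w}\widehat{f^{-1}}$ is stated on the intrinsic free space $\F(f(M))$ rather than on the subspace $\F_N(f(M)) \subset \F(N)$; but the two norms on $\lspan \delta(f(M))$ are equivalent (they coincide for real scalars and differ by at most $\tfrac{4}{\pi}$ in the complex case, as recalled in Section~\ref{sectionFC}), so boundedness is unaffected by the choice.

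Finally, $(ii) \iff (iii)$ is immediate from Theorem~\ref{Poids}: applying the equivalence $(i) \iff (v)$ there to the weighted Lipschitz operator $\widetilde{w}\widehat{f^{-1}} : \F(f(M)) \to \F(M)$ (whose base-point hypothesis $\widetilde{w}(0)=0$ is built into the definition), the quantities $A$ and $B$ for this operator become exactly the two suprema displayed in $(iii)$ after the substitutions $z=f(x)$, $z'=f(y)$, $f^{-1}(z)=x$ and $\widetilde{w}(z)=1/w(x)$. The main obstacle I anticipate is the bounded-below direction together with its bookkeeping: one must correctly identify the closed range as $\F_N(f(M))$, recognize $\widetilde{w}\widehat{f^{-1}}$ as the genuine inverse on the range, and reconcile the $\F_N(f(M))$ norm with the intrinsic $\F(f(M))$ norm appearing in the statement.
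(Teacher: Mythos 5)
Your proposal is correct and follows essentially the same route as the paper: reduce surjectivity of $wC_f$ to $w\widehat{f}$ being an isomorphism onto its (closed) range, extract the non-vanishing of $w$ and injectivity of $f$ from injectivity of $w\widehat{f}$ via the same explicit kernel elements, identify the inverse on $\lspan\delta(f(M))$ as $\widetilde{w}\widehat{f^{-1}}$, and obtain $(ii)\iff(iii)$ by applying Theorem~\ref{Poids} to $\widetilde{w}\widehat{f^{-1}}$. Your only genuine addition is making explicit the identification $\F_N(f(M))\simeq\F(f(M))$ (isometric for real scalars, $\tfrac{4}{\pi}$-isomorphic for complex ones), a point the paper passes over with a bare ``$\simeq$''; this is a welcome clarification but not a different argument.
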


\begin{proof} 
$(i) \implies (ii)$. First, it is clear that if $w\widehat{f}$ is injective then $w$ does not vanish on $M\setminus \{ 0  \}$ and $f$ is injective. Indeed, if $w(x) = 0$ or $f(x)=0$ with $x\neq 0$, then $w\widehat{f}(\delta(x)) = 0$. Also if $f(y) = f(z)$ with $y\neq z$, then $w\widehat{f}\left(\delta(y) - \frac{w(y)}{w(z)} \delta(z) \right) = 0$  while $\delta(y) - \frac{w(y)}{w(z)} \delta(z) \neq 0$. Next, as recalled in the introduction, $wC_f$ is surjective if and only if its preadjoint $w\widehat{f}$ is injective and has a closed range, that is, if $w\widehat{f} : \F(M) \to  \F_N(f(M)) \simeq \F(f(M))$ is an isomorphism. When $w$ does not vanish on $M\setminus \{ 0  \}$ and $f$ is injective, the inverse of $T := w\widehat{f} : \F(M) \to \F(f(M)) $ 
is given, for every $y \in f(M)\setminus \{ 0_N \}$, by
$$
T^{-1}(\delta(y)) = \dfrac{1}{w(f^{-1}(y))} \delta(f^{-1}(y)) = \widetilde{w}(y) \delta(f^{-1}(y)).
$$
Thus $T^{-1} = \widetilde{w}\widehat{f^{-1}}$, and this yields $(ii)$.\\
$(ii) \implies (i)$.
Simply note that $w\widehat{f} : \F(M) \to \F(f(M))$ is an isomorphism with inverse mapping being $\widetilde{w}\widehat{f^{-1}}$, therefore $wC_f$ is surjective.\\
$(ii) \Longleftrightarrow (iii)$. It is a consequence of Theorem \ref{Poids} applied to $\widetilde{w}\widehat{f^{-1}}$.
\end{proof}

\section{Compact and weakly compact weighted operators}\label{sectioncompact}

Pursuing our study of weighted operators, we will now focus on compactness and weak compactness of operators of the kind  $w \widehat{f} : \F(M) \to \F(N)$ and $wC_f: \Lip_0(N) \to \Lip_0(M)$. The first result which we present below will be very useful in the sequel. It is the straightforward extension of \cite[Theorem~2.3]{Vargas2}.

\begin{proposition} \label{caracCompactWeight}
	Let $M,N$ be pointed metric spaces.  Let $w : M \to \C$ and $f : M \to N$ be any maps such that $f(0)=0$ or $w(0)=0$. If $w\widehat{f} : \F(M) \to \F(N)$ is a bounded operator then the following are equivalent: 
	\begin{enumerate}[$(i)$]
		\item $w\widehat{f}$ is (weakly) compact;
		\item $wC_f = (w\widehat{f})^*$ is (weakly) compact;
		\item $\Big\{ \dfrac{w(x)\delta(f(x)) - w(y)\delta(f(y))}{d(x,y)} \, : \, x \neq y \in M \Big\}$
		is relatively (weakly) compact.
	\end{enumerate}
\end{proposition}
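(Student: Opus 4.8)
The plan is to prove the equivalences by combining the abstract duality facts recalled in the introduction with the concrete description of the unit ball of $\F(M)$ as the closed absolutely convex hull of molecules (Lemma~\ref{lemma:AbsConvHull}). The equivalence $(i) \iff (ii)$ is immediate and requires no work specific to our setting: since $wC_f = (w\widehat{f})^*$ by Theorem~\ref{Poids}, Schauder's theorem gives the equivalence of compactness and Gantmacher's theorem gives the equivalence of weak compactness. So the heart of the matter is the equivalence $(i) \iff (iii)$, and I would state and prove the ``compact'' and ``weakly compact'' cases in parallel, since the argument is formally the same once one invokes the right characterization of (weak) compactness.

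First I would establish the implication $(i) \implies (iii)$. Writing $\mol{xy} = d(x,y)^{-1}(\delta(x)-\delta(y))$ for the molecule, one computes directly that
$$
w\widehat{f}(\mol{xy}) = \frac{w(x)\delta(f(x)) - w(y)\delta(f(y))}{d(x,y)},
$$
so the set appearing in $(iii)$ is exactly $w\widehat{f}(\mathcal{M})$, the image under $w\widehat{f}$ of the set of molecules $\mathcal{M}$. Since $\mathcal{M} \subset B_{\F(M)}$, if $w\widehat{f}$ is (weakly) compact then $w\widehat{f}(B_{\F(M)})$ is relatively (weakly) compact, and hence so is its subset $w\widehat{f}(\mathcal{M})$. (For the relative weak compactness of a subset, one uses that a subset of a relatively weakly compact set is relatively weakly compact, which follows from the Eberlein--\v{S}mulian theorem or directly from the fact that relative weak compactness is inherited by subsets.)

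The reverse implication $(iii) \implies (i)$ is where the content lies, and the key is Lemma~\ref{lemma:AbsConvHull}, which asserts $B_{\F(M)} = \overline{\aco}\,\mathcal{M}$. Applying the bounded operator $w\widehat{f}$ and using its continuity and linearity, I would argue
$$
w\widehat{f}(B_{\F(M)}) = w\widehat{f}\big(\overline{\aco}\,\mathcal{M}\big) \subset \overline{\aco}\,\big(w\widehat{f}(\mathcal{M})\big),
$$
where the closure is taken in $\F(N)$. Thus if $w\widehat{f}(\mathcal{M})$ (the set in $(iii)$) is relatively (weakly) compact, it suffices to check that the closed absolutely convex hull of a relatively (weakly) compact set is again relatively (weakly) compact, and then that relative (weak) compactness of $w\widehat{f}(B_{\F(M)})$ is precisely the statement that $w\widehat{f}$ is (weakly) compact.

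The main obstacle, and the step I would treat with care, is the closed-absolutely-convex-hull stability. For the \emph{norm} compact case this is standard: the closed convex hull (hence closed absolutely convex hull, since $\K$-scalars of modulus one form a compact set) of a relatively compact set in a Banach space is again relatively compact (this is Mazur's theorem, valid because $\F(N)$ is complete). For the \emph{weakly} compact case the corresponding statement is the Krein--\v{S}mulian theorem: the closed convex hull of a weakly compact set in a Banach space is weakly compact. To pass from convex to absolutely convex hull one notes $\aco(S) = \conv(\D \cdot S)$ where $\D$ is the closed unit disc of $\K$, and $\D \cdot S$ is relatively (weakly) compact whenever $S$ is (being the continuous image of $\D \times \overline{S}$, respectively $\D \times \wcl{S}$, under scalar multiplication). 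I would cite Mazur's theorem and Krein--\v{S}mulian (e.g.\ from \cite{Fabian} or \cite{Megg}) rather than reprove them, and take care that all sets are understood as subsets of the complete space $\F(N)$ so that these hull theorems apply verbatim in both the real and complex cases.
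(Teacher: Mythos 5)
Your proposal is correct and follows essentially the same route as the paper: the equivalence $(i)\iff(ii)$ via Schauder and Gantmacher, the identification of the set in $(iii)$ with $w\widehat{f}(\mathcal{M})$, and the reverse implication via Lemma~\ref{lemma:AbsConvHull} together with the stability of relative (weak) compactness under closed absolutely convex hulls (Mazur and Krein--\v{S}mulian). The only difference is cosmetic: you spell out the reduction from absolutely convex to convex hulls, where the paper simply cites the corresponding references.
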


\begin{proof}
	The equivalence $(i) \iff (ii)$ follows from Schauder's theorem for norm compactness (see e.g. \cite[Theorem~3.4.15]{Megg}) and from Gantmacher's theorem for weak compactness (see e.g. \cite[Theorem~3.5.13]{Megg}). 
	For the direction $(i) \implies (iii)$, notice that 
	$$ \left\{ \dfrac{w(x)\delta(f(x)) - w(y)\delta(f(y))}{d(x,y)} \; : \; x \neq y \in M \right\} = w\widehat{f}(\mathcal{M}),$$
	where $\mathcal{M} = \left\{ d(x,y)^{-1}(\delta(x) - \delta(y)) \; : \; x \neq y \in M \right\}$. Since $\mathcal{M} \subset B_{\F(M)}$, if $w\widehat{f}$ is (weakly) compact then $w\widehat{f}(\mathcal{M})$ must be relatively (weakly) compact. Let us finish the proof by showing that $(iii) \implies (i)$. Thanks to Lemma~\ref{lemma:AbsConvHull}, we have $B_{\F(M)} = \overline{\aco} \mathcal{M}$. Now observe that by boundedness of $w\widehat{f}$
	$$ w\widehat{f}(B_{\F(M)}) \subset  w\widehat{f}(\overline{\aco}\mathcal{M}) \subset \overline{\aco} (w\widehat{f}(\mathcal{M})) \subset \overline{\aco} \left(\overline{w\widehat{f}(\mathcal{M})}\right).$$
	So, if $w\widehat{f}(\mathcal{M})$ is relatively (weakly) compact, then $ \overline{\aco} \left(\overline{w\widehat{f}(\mathcal{M})}\right)$ is (weakly) compact (see e.g. \cite[Corollary~5.31 and Theorem~5.35]{IDBook} for norm compactness and \cite[Theorem~10.15]{KreinTHM} for weak compactness). Hence  $\widehat{f}(B_{\F(M)})$ is relatively (weakly) compact. 
\end{proof}

Next, we wish to highlight that $w \widehat{f}$ and $wC_f$ are in fact compact if and only if they are weakly compact. This was already known in the real case when $w \equiv 1$, see \cite[Theorem~B]{ACP21}. We use the same arguments to derive the case of weighted operators. 

\begin{corollary}\label{compactiffweakly}
	Let $M,N$ be complete pointed metric spaces.  Let $w : M \to \C$ and $f : M \to N$ be any maps such that $f(0)=0$ or $w(0)=0$ and $w\widehat{f}$ is bounded. Then $w \widehat{f}$ and $wC_f$ are compact if and only if they are weakly compact.
\end{corollary}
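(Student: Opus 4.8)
The plan is to funnel the statement through the molecule characterization of Proposition~\ref{caracCompactWeight} and then to exploit the crucial structural feature that every weighted molecule has support of cardinality at most two. Since (norm) compactness trivially implies weak compactness, only the converse requires an argument. Moreover, by Proposition~\ref{caracCompactWeight}, both $w\widehat{f}$ and $wC_f = (w\widehat{f})^*$ are weakly compact (resp. compact) exactly when the set of weighted molecules
$$
S := w\widehat{f}(\mathcal{M}) = \left\{ \frac{w(x)\delta(f(x)) - w(y)\delta(f(y))}{d(x,y)} \; : \; x \neq y \in M \right\}
$$
is relatively weakly compact (resp. relatively norm compact). Hence it suffices to show that if $S$ is relatively weakly compact, then it is in fact relatively norm compact.

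First I would observe that each element of $S$ has support contained in the two-point set $\{f(x), f(y)\}$, so that $S \subset \FS_2(N,\C)$. The key point is then that $\FS_2(N,\C)$ behaves rigidly with respect to weak convergence. Concretely, take any sequence $(\gamma_k)_k$ in $S$. Assuming $S$ is relatively weakly compact, the Eberlein--\v{S}mulian theorem provides a subsequence $(\gamma_{k_j})_j$ that weakly converges to some $\gamma \in \F(N,\C)$. Since $(\gamma_{k_j})_j \subset \FS_2(N,\C)$ and $N$ is complete, Theorem~\ref{ThmNormToWeak} (applied with the fixed bound equal to $2$) guarantees that $\gamma \in \FS_2(N,\C)$ and, more importantly, that $\gamma_{k_j} \to \gamma$ in norm. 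Thus every sequence in $S$ admits a norm-convergent subsequence, which means precisely that $S$ is relatively norm compact. Invoking Proposition~\ref{caracCompactWeight} once more then yields that $w\widehat{f}$ and $wC_f$ are compact, completing the proof.

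The substance of the argument lies entirely in the observation that the weighted molecules live in $\FS_2(N,\C)$, which is exactly the hypothesis needed to feed into Theorem~\ref{ThmNormToWeak}; everything else is a routine transfer through Proposition~\ref{caracCompactWeight} together with the Eberlein--\v{S}mulian theorem. I do not expect a serious obstacle here: the only care needed is to ensure the completeness of $N$ (guaranteed by hypothesis) so that the notion of support and Theorem~\ref{ThmNormToWeak} apply, and to read the relevant bound as $n=2$ in the statement of that theorem.
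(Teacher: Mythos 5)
Your argument is correct and is essentially identical to the paper's proof: both reduce via Proposition~\ref{caracCompactWeight} to the relative (weak) compactness of the set of weighted molecules, observe that this set lies in $\FS_2(N)$, and upgrade weak to norm compactness using the Eberlein--\v{S}mulian theorem together with Theorem~\ref{ThmNormToWeak}. Your write-up merely spells out the sequential extraction in more detail.
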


\begin{proof}
	Thanks to Theorem~\ref{ThmNormToWeak} and the Eberlein--\v{S}mulian theorem, 
	the set
	$$\Big\{ \dfrac{w(x)\delta(f(x)) - w(y)\delta(f(y))}{d(x,y)} \, : \, x \neq y \in M \Big\} \subset \FS_2(N)$$ 
	is  relatively weakly compact if and only if it compact. The conclusion now follows from Proposition~\ref{caracCompactWeight}.
\end{proof}

We will now investigate some metric conditions on $w$ and $f$ under which the considered operators are compact. For this matter, Proposition~\ref{caracCompactWeight} shows that it is crucial to have a better understanding of sequences of finitely supported elements in Lipschitz free spaces. This is exactly the purpose of the next lemmas.

\begin{lemma}\label{PrepmaintheoremNew}
	Let $M$ be a complete pointed metric space. Let $k\in \mathbb{N}$ and $(\gamma_n)_n \subset \mathcal{FS}_k(M)$ be a sequence converging weakly to an element $\gamma \in \mathcal{FS}_k(M)$. Then, for every $p\in \supp(\gamma)$, there exists $(x_n)_n \subset M$ such that  $\lim\limits_{n \to + \infty}\,d(x_n,p)=0$ and $x_n \in \supp(\gamma_n)$ for every $n \in \N$.
\end{lemma}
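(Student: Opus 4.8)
The statement asserts that weak convergence of finitely supported elements forces the supports to ``converge'' in the sense that every support point of the limit is approached by support points of the approximants. The plan is to argue by contradiction: suppose some $p \in \supp(\gamma)$ is \emph{not} approached, meaning there exists $\rho > 0$ and a subsequence (which I relabel) such that $d(x_n, p) \geq \rho$ for every $x_n \in \supp(\gamma_n)$ and every $n$. Equivalently, $\supp(\gamma_n) \cap B(p, \rho) = \emptyset$ for all $n$ along this subsequence. The goal is then to produce a test function $f \in \Lip_0(M)$ supported near $p$ that detects $p \in \supp(\gamma)$ via Proposition~\ref{CharacSupport} but which annihilates every $\gamma_n$, contradicting weak convergence $\langle f, \gamma_n \rangle \to \langle f, \gamma \rangle$.

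\textbf{Key steps.} First I would invoke Proposition~\ref{CharacSupport}: since $p \in \supp(\gamma)$, for the neighbourhood $U = B(p, \rho/2)$ there exists $f \in \Lip_0(M)$ with $\supp(f) \subset B(p, \rho/2)$ and $\langle f, \gamma \rangle \neq 0$. Second, I would observe that because $\supp(f) \subset B(p, \rho/2)$ while $\supp(\gamma_n) \subset M \setminus B(p, \rho)$, the supports of $f$ and of $\gamma_n$ are disjoint. The crucial point is that a finitely supported element $\gamma_n = \sum_i a_i \delta(z_i)$ with all $z_i \notin \supp(f)$ satisfies $\langle f, \gamma_n \rangle = \sum_i a_i f(z_i) = 0$, since $f$ vanishes off its support. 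This would hold for every $n$ along the subsequence, so $\langle f, \gamma_n \rangle = 0 \to 0$, whereas $\langle f, \gamma \rangle \neq 0$. This contradicts weak convergence $\gamma_n \wconv \gamma$, which forces $\langle f, \gamma_n \rangle \to \langle f, \gamma \rangle = \langle f, \gamma \rangle \neq 0$. Hence no such $p$ can fail the conclusion, and for each $p \in \supp(\gamma)$ one can pick $x_n \in \supp(\gamma_n)$ with $d(x_n, p) \to 0$.

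\textbf{Extracting the sequence.} The contradiction above is really a statement about a single scale $\rho$; to produce an actual sequence $(x_n)_n$ with $d(x_n, p) \to 0$ I would run the argument quantitatively. For each fixed $\varepsilon > 0$, the same reasoning shows that $\supp(\gamma_n) \cap B(p, \varepsilon) \neq \emptyset$ for all sufficiently large $n$ (otherwise a test function supported in $B(p, \varepsilon/2)$ detecting $p$ would annihilate infinitely many $\gamma_n$, contradicting $\langle f, \gamma_n \rangle \to \langle f, \gamma \rangle \neq 0$). Applying this with $\varepsilon = 1/m$ yields an increasing sequence of indices beyond which $\supp(\gamma_n)$ meets $B(p, 1/m)$; a standard diagonal choice then selects $x_n \in \supp(\gamma_n)$ with $d(x_n, p) \to 0$. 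For the finitely many early indices where no such point is available, one may pick $x_n \in \supp(\gamma_n)$ arbitrarily, which does not affect the limit.

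\textbf{Main obstacle.} The genuine content is the second step, namely that weak convergence combined with the support hypothesis is strong enough to pin down support points locally. The subtlety worth checking carefully is the claim that $\langle f, \gamma_n \rangle = 0$ whenever $\supp(f)$ and $\supp(\gamma_n)$ are disjoint: this is transparent for the explicit finitely supported representation $\gamma_n = \sum_i a_i \delta(z_i)$, but one should make sure the representation is chosen so that \emph{all} atoms $z_i$ lie in $\supp(\gamma_n)$, which is exactly what being supported on a set $K$ with $\gamma_n \in \F_M(\supp(\gamma_n))$ guarantees via the intersection theorem. The completeness hypothesis on $M$ is used precisely to make Proposition~\ref{CharacSupport} and the notion of support available.
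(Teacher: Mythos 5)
Your proposal is correct and follows essentially the same route as the paper: both argue by contradiction with a Lipschitz test function supported in a small ball around $p$ that pairs nontrivially with $\gamma$ (you via Proposition~\ref{CharacSupport}, the paper via the bump function of Lemma~\ref{LemmaConstrctionLipF}) but annihilates the $\gamma_n$ whose supports miss that ball, contradicting weak convergence. Your extra diagonal-extraction paragraph is a slightly more careful packaging of the same idea (the paper instead picks the nearest support point of each $\gamma_n$ to $p$ at the outset), but the substance is identical.
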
 

Before going into the proof, it is worth mentioning that the base point cannot be an isolated point of $\supp(\gamma)$ for $\gamma \in \F(M)$. Therefore $0$ does not belong to the support of any element in $\mathcal{FS}_k(M)$.

\begin{proof}
	Aiming for a contradiction, assume that for every sequence $(x_n)_n$ with $x_n \in \supp(\gamma_n)$, there exists $\ep >0$ such that $\limsup_n d(x_n,p)>\ep$. For every $n \in \N$, we pick $z_n \in \supp(\gamma_n)$ such that $d(z_n , p) = \min \{ d(z,p) \, | \; z \in \supp(\gamma_n) \}$ (the minimum exists since $|\supp(\gamma_n)| \leq k$). Let us fix $\ep >0$ such that $d(p , \supp(\gamma) \setminus \{p\}) > \ep$ and $\lim_k d(z_{n_k},p) > \ep$ for some well chosen subsequence $(z_{n_k})_k$. By Lemma~\ref{LemmaConstrctionLipF}, we can find $h \in \Lip_0(M,\R)$ such that $h(p) = 1$, and $h \equiv 0$ outside of $B(p, \ep/2)$.
	Now notice that $\< h , \gamma_{n_k} \> =0$ for every $k \in \N$ while $\lim_k \< h , \gamma_{n_k} \> = \< h , \gamma \> =1$ since $\gamma_{n} \to \gamma$ in the weak topology. This contradiction concludes the proof.
\end{proof}

\begin{lemma} \label{SequencesInFS2}
	Let $M$ be a complete pointed metric space. Let $(\gamma_n)_n \subset \mathcal{FS}_2(M)$ be a sequence which weakly converges to some $\gamma \in \F(M)$. Then exactly one of the following cases occurs:
	\begin{enumerate}[$(i)$, itemsep=5pt]
		\item $|\supp(\gamma)| = 2$, $\gamma =a\delta(p) + b \delta(q)$, and we may write $\gamma_n = a_n \delta(x_n) + b_n \delta(y_n)$ with $a_n \to a$, $b_n \to b$, $x_n \to p$ and $y_n \to q$.
		\item $|\supp(\gamma)| = 1$, $\gamma =a\delta(p)$, and we may write  $\gamma_n = a_n \delta(x_n) + b_n \delta(y_n)$ with  $x_n \to p$ and \\
		- either $y_n \to p$, $a_n+b_n \to a$, $|a_n|d(x_n,y_n) \to 0$ and $|b_n|d(x_n,y_n) \to 0$;\\
		- or $a_{n_k} \to a$ and $b_{n_k}d(y_{n_k},0) \to 0$ for some increasing subsequence $(n_k)_k \subset \N$.
		\item $\gamma = 0$  and we may write $\gamma_n = a_n \delta(x_n) + b_n \delta(y_n)$ with $a_n d(x_n,0) + b_n d(y_n,0)\to 0$,
	 $a_n d(x_n,0) +b_n (d(x_n,0)-d(x_n,y_n))\to 0$,
  $b_n d(y_n,0) + a_n(d(y_n,0)-d(x_n,y_n)) \to 0$.
	\end{enumerate}
\end{lemma}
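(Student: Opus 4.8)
The plan is to begin by upgrading the mode of convergence. Since $(\gamma_n)_n \subset \FS_2(M)$ converges weakly to $\gamma$, Theorem~\ref{ThmNormToWeak} guarantees that $\gamma \in \FS_2(M)$ and that $\gamma_n \to \gamma$ \emph{in norm}. Consequently the integer $|\supp(\gamma)| \in \{0,1,2\}$ is well defined, and the three alternatives in the statement correspond exactly to $|\supp(\gamma)|$ being $2$, $1$, or $0$; this immediately yields the ``exactly one'' clause. Throughout I would write each $\gamma_n = a_n\delta(x_n) + b_n\delta(y_n)$, allowing $x_n = y_n$, zero coefficients, or a point equal to $0_M$ so as to cover the degenerate situations where $|\supp(\gamma_n)| \leq 1$. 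The basic tools will be Lemma~\ref{PrepmaintheoremNew} (to locate support points of $\gamma_n$ near those of $\gamma$), Lemma~\ref{LemmaConstrctionLipF} (to build bump functions isolating a single limit point), and the elementary identity $\norm{\delta(u) - \delta(v)} = d(u,v)$.

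The case $\gamma = 0$ is the quickest: by Lemma~\ref{lemma:formula}, the quantity
$$M_n := \max\big\{|a_n d(x_n,0) + b_n d(y_n,0)|,\; |a_n d(x_n,0) + b_n(d(x_n,0) - d(x_n,y_n))|,\; |b_n d(y_n,0) + a_n(d(y_n,0) - d(x_n,y_n))|\big\}$$
satisfies $M_n \leq \sqrt{2}\,\norm{\gamma_n} \to 0$, so each of its three entries tends to $0$, which is precisely $(iii)$. For the case $|\supp(\gamma)| = 2$, I would write $\gamma = a\delta(p) + b\delta(q)$ with $p \neq q$, both distinct from $0$. Lemma~\ref{PrepmaintheoremNew} produces support points of $\gamma_n$ converging to $p$ and to $q$; since $p \neq q$ these are distinct for large $n$, so after relabelling I may take $x_n \to p$, $y_n \to q$ and $\supp(\gamma_n) = \{x_n, y_n\}$. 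Choosing via Lemma~\ref{LemmaConstrctionLipF} a function $h \in \Lip_0(M,\R)$ equal to $1$ on a small ball around $p$ and vanishing off a slightly larger ball that avoids both $q$ and $0$, one gets $\langle h, \gamma_n\rangle = a_n$ for large $n$, whence $a_n = \langle h, \gamma_n\rangle \to \langle h, \gamma\rangle = a$; symmetrically $b_n \to b$. This gives $(i)$.

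The delicate case is $|\supp(\gamma)| = 1$, say $\gamma = a\delta(p)$ with $p \neq 0$. Lemma~\ref{PrepmaintheoremNew} furnishes $x_n \in \supp(\gamma_n)$ with $x_n \to p$, and I let $y_n$ be the remaining support point (or $y_n = x_n$ if there is none). The argument then splits according to whether $y_n \to p$. If it does, a bump function equal to $1$ near $p$ gives $a_n + b_n = \langle h, \gamma_n \rangle \to a$; since $\delta(x_n), \delta(y_n) \to \delta(p)$ in norm and $a_n + b_n$ stays bounded, the identity $\gamma_n - (a_n + b_n)\delta(x_n) = b_n(\delta(y_n) - \delta(x_n))$ forces $|b_n| d(x_n,y_n) = \norm{b_n(\delta(y_n)-\delta(x_n))} \to 0$, and symmetrically $|a_n| d(x_n,y_n) \to 0$, yielding the first alternative. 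If instead $y_n \not\to p$, I would pass to a subsequence $(n_k)$ along which $d(y_{n_k}, p) \geq \ep_0 > 0$; a bump function supported in $B(p, \ep_0/2)$ now gives $a_{n_k} \to a$, and then $b_{n_k}\delta(y_{n_k}) = \gamma_{n_k} - a_{n_k}\delta(x_{n_k}) \to 0$ in norm, i.e. $|b_{n_k}| d(y_{n_k},0) \to 0$, which is the second alternative.

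I expect this last case to be the main obstacle, essentially in the bookkeeping: one must treat uniformly the possibilities that $\gamma_n$ has one or two support points, that coefficients may vanish or be unbounded, and that the ``escaping'' point $y_n$ may misbehave only along a subsequence — hence the passage to subsequences and the care needed when invoking Lemma~\ref{LemmaConstrctionLipF} to ensure the separating functions also vanish at $0_M$.
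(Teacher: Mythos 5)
Your proposal is correct and follows essentially the same route as the paper's proof: Theorem~\ref{ThmNormToWeak} (equivalently Lemma~\ref{WeaklyClosed}) to place $\gamma$ in $\FS_2(M)$ and upgrade to norm convergence, Lemma~\ref{PrepmaintheoremNew} to locate support points of $\gamma_n$ near those of $\gamma$, bump functions from Lemma~\ref{LemmaConstrctionLipF} to extract the coefficients, and Lemma~\ref{lemma:formula} for the case $\gamma=0$. The case analysis in $(ii)$, including the dichotomy on whether $y_n\to p$ and the passage to a subsequence otherwise, matches the paper's argument.
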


\begin{remark} \label{converse}
\begin{enumerate}[leftmargin=* , itemsep=5pt]
    \item In each one of the cases presented above, a converse holds in the following sense:
    if $(\gamma_n)_n \subset \mathcal{FS}_2(M)$ is a sequence verifying one of the description in $(i)$ or $(iii)$, then $(\gamma_n)_n$ converges to $\gamma$ described in the corresponding case. The same statement holds for the first case in $(ii)$, while the second case in $(ii)$ yields the convergence of a subsequence of $(\gamma_n)_n$. The details are easy and left to the reader. 
    \item Note that in the case $(i)$ of the previous Lemma, the fact that the limit has two elements in its support forces the elements $\gamma_n$ (for $n$ large enough) to have two elements in their support as well. But since the family $(\delta(x))_{x\in M}$ is linearly independent, the representation $\gamma_n = a_n \delta(x_n) + b_n \delta(y_n)$ is unique.
\end{enumerate}
\end{remark}

\begin{proof} The fact that exactly one of the three cases can occur as a consequence of Lemma~\ref{WeaklyClosed}.
	Let us prove $(i)$.  Assume that $|\supp(\gamma)| = 2$ and $\gamma =a\delta(p) + b \delta(q)$. By Lemma~\ref{PrepmaintheoremNew}, there exist $(x_n)_n  , (y_n)_n \subset M$ such that $x_n, y_n \in \supp(\gamma_n)$ for every $n \in \N$, $\lim_n\,d(x_n,p)=0$ and $\lim_n \,d(y_n,q)=0$. There exists $N \in \N$ large enough such that for every $n \geq N$, $d(p , x_n) < \frac{1}{4} d(p,q)$ and $d(q , y_n) < \frac{1}{4} d(p,q)$. In particular $x_n \neq y_n$ and therefore $|\supp(\gamma_n)| = 2$ for every $n \geq N$. Thus we may write $\gamma_n  = a_n \delta(x_n) + b_n \delta(y_n)$ with possibly $a_n =0$ or $b_n=0$ if $n \leq N$. Now we use Lemma~\ref{LemmaConstrctionLipF} to define a map $h \in \Lip_0(M,\R)$ such that $h \equiv 1$ on $B(p, \frac 14 d(p,q))$ while $h \equiv 0$ outside of the ball $B(p , \frac 1 2 d(p,q))$. 
	We readily obtain that $a_n = \<h , \gamma_n \>$ for $n \geq N$ and 
	$$\lim\limits_{n \to + \infty } a_n = \lim\limits_{n \to + \infty } \< h , \gamma_n \> = \< h , \gamma \> =a.$$
	A similar argument can be used to prove that $b_n \to b$.
	\medskip
	
	Let us prove $(ii)$. Assume that $|\supp(\gamma)| = 1$ and $\gamma =a\delta(p)$. By Lemma~\ref{PrepmaintheoremNew}, we can write $\gamma_n = a_n \delta(x_n) + b_n \delta(y_n)$ with $x_n \in \supp(\gamma_n)$ and $x_n \to p$ (and possibly $y_n=0$). We now distinguish two cases:
	\smallskip
	
	- If $y_n \to p$, we let $\ep:=d(p,0)/2$. Then there exists $n_0 \in \N$ such that, for every $n \geq n_0$, $d(x_n,p)< \ep/2$ and $d(y_n,p)< \ep/2$. We use Lemma~\ref{LemmaConstrctionLipF} to define $h \in \Lip_0(M)$ such that $h(p) \equiv 1$ on $B(p,\ep/2)$ and $h \equiv 0$ outside of $B(p, \ep)$. We then observe that 
		$$ \forall n \geq n_0, \quad a_n + b_n = \langle h , \gamma_n \rangle \to \langle h , \gamma \rangle = a.  $$
		This implies that
		$$a_n(\delta(x_n) -\delta(y_n)) = a_n\delta(x_n)+b_n\delta(y_n) -(a_n+b_n)\delta(y_n) \longrightarrow a \delta(p) - a\delta(p) = 0. $$
		Hence $\|a_n(\delta(x_n) -\delta(y_n))\| = |a_n|d(x_n , y_n) \to 0$ (weak and norm convergence coincide in $\FS_2(M)$ thanks to Theorem~\ref{ThmNormToWeak}). Similarly $b_nd(x_n ,y_n) \to 0$.
	\smallskip
	
	- If $y_n \not\to p$, there exist $\ep >0$ and a subsequence $(y_{n_k})_k$ such that $d(y_{n_k},p) > \ep >0$ for every $k \in \N$. Then we define the same kind of map $h$ as above, that is $h(p) \equiv 1$ on $B(p,\ep/2)$ and $h \equiv 0$ outside of $B(p, \ep)$. Observe that
		$$ \forall k \in \N, \quad a_{n_k} = \langle h , \gamma_{n_k} \rangle \to \langle h , \gamma \rangle = a.  $$
		This implies that $b_{n_k} \delta(y_{n_k}) = \gamma_{n_k} - a_{n_k} \delta(x_{n_k}) \to 0$ in the norm topology. 
	\medskip
	
	Finally, $(iii)$ is a direct consequence of Theorem~\ref{ThmNormToWeak} and Lemma~\ref{lemma:formula}.
\end{proof}

\smallskip

\subsection{Sufficient conditions for compactness}\label{sectionsuffcompac}

In this subsection, we first retrieve, thanks to the results obtained in the previous sections, a sufficient condition for compactness of weighted composition operators that has been obtained in \cite[Theorem~3.3]{GolMa22}. From this result, we will deduce a necessary and sufficient condition for compactness  on bounded and uniformly discrete metric spaces. In fact, a very general characterization on any metric space (and through metric conditions) can be obtained. However, the statement is not as nice as the one in \cite{GolMa22} 
since it turns out that having $f$ with a relatively compact range is of great use for one implication. For that reason, and because we will not use it for the rest of the paper, we postpone its statement and its proof to Appendix~\ref{AnnexeA}.
Recall the notation introduced in Section~\ref{bounded}:
\begin{align*}
	\sigma(x,y) &= \frac{d(f(x),f(y))}{d(x,y)}(s(x,y)|w(x)|+s(y,x)|w(y)|),\\
	\tau(x,y)   &= \frac{|w(x)-w(y)|}{d(x,y)} \min\{d(f(x),0) , d(f(y),0) \}, 
\end{align*}
where $s(x,y) = 1$ if $d(f(x),0)\geq d(f(y),0)$, and $s(x,y) = 0$ otherwise.

\begin{theorem}[\protect{\cite[Theorem~3.3]{GolMa22}}] \label{CaracCompact}
	Let $M,N$ be pointed metric spaces.  Let $w : M \to \C$ and $f : M \to N$ be any maps such that $f(0)=0$ or $w(0)=0$. Assume that $w\widehat{f}$ is a bounded operator and that $f(\text{coz}(w))$ is totally bounded. 
	Then $w\widehat{f}$ and $wC_f = (w\widehat{f})^*$ are compact if and only if 
	$$ \lim \sigma(x,y) = 0 \text{ whenever } d(f(x) , f(y)) \to 0, \text{ and}$$ 
	$$ \lim \tau(x,y) = 0 \text{ whenever } \min( d(f(x) ,0) , d(f(y) , 0) )  \to 0.$$
\end{theorem}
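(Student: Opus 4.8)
The plan is to move everything to the predual side and reduce the statement to a relative compactness property of the set of images of molecules, which I then analyse sequentially via the structural lemmas on $\FS_2$. First I would invoke Section~\ref{remarkcomplete} to replace $M$ and $N$ by their completions (this changes neither the operators nor their norms, but makes the support machinery available), so that total boundedness of $f(\text{coz}(w))$ becomes relative compactness of its closure in $\overline N$. By Proposition~\ref{caracCompactWeight}, $w\widehat{f}$ is compact if and only if the set
$$ S = w\widehat{f}(\mathcal M) = \Big\{ \gamma_{xy} := a_{xy}\,\delta(f(x)) + b_{xy}\,\delta(f(y)) \; : \; x \neq y \Big\} \subset \FS_2(N) $$
is relatively compact, where $a_{xy} = w(x)/d(x,y)$ and $b_{xy} = -w(y)/d(x,y)$. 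Throughout I keep two elementary facts in mind: $\|\gamma_{xy}\|$ is comparable to $\max\{A(x,y),B(x,y),B(y,x)\}$ (Lemma~\ref{lemma:formula} applied at the points $f(x),f(y)$), and the latter is squeezed by $\sigma,\tau$ (Lemma~\ref{inequalities}), so that $\|\gamma_{xy}\| \gtrsim \max(\sigma(x,y),\tau(x,y))$ and $\|\gamma_{xy}\| \lesssim \sigma(x,y) + \tau(x,y)$.

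For necessity I would argue by contraposition. Suppose, say, that $\sigma(x_n,y_n) \geq \epsilon > 0$ along sequences with $d(f(x_n),f(y_n)) \to 0$. Since $\sigma \geq \epsilon$ forces $w(x_n)\neq0$ or $w(y_n)\neq0$, the relevant images lie in $f(\text{coz}(w))$, so after extraction $f(x_n),f(y_n)\to u$ in $\overline N$ (the two masses collapse). Relative compactness gives a norm limit $\gamma$ of $\gamma_{x_ny_n}$; by Theorem~\ref{ThmNormToWeak} and Lemma~\ref{PrepmaintheoremNew} every support point of $\gamma$ is a limit of support points of the $\gamma_n$, hence equals $u$, and $0$ is never such a point. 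If $u\neq0$ then $\gamma = c\,\delta(u)$ and the first alternative of Lemma~\ref{SequencesInFS2}$(ii)$ forces both $|a_{x_ny_n}|\,d(f(x_n),f(y_n))\to0$ and $|b_{x_ny_n}|\,d(f(x_n),f(y_n))\to0$, whence $\sigma(x_n,y_n)\to0$, contradicting $\sigma\geq\epsilon$; if $u=0$ then necessarily $\gamma = 0$, while $\|\gamma_n\| \gtrsim \sigma \geq\epsilon$ does not tend to $0$, again a contradiction. The failure of the $\tau$-condition is excluded in exactly the same manner, using the second alternative of Lemma~\ref{SequencesInFS2}$(ii)$ (when the surviving image is $\neq0$) or the norm lower bound $\|\gamma_n\|\gtrsim\tau$ (when it is $0$).

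For sufficiency I would show directly that an arbitrary sequence of molecules $\gamma_n = a_n\delta(u_n)+b_n\delta(v_n)$, with $u_n=f(x_n)$ and $v_n=f(y_n)$, admits a convergent subsequence. After extraction, total boundedness yields $u_n\to u$ and $v_n\to v$ whenever the corresponding weights do not vanish. I then distinguish the cases: (a) a genuine two-point limit $u\neq v$ with both weights nonzero, where boundedness of $\|\gamma_n\|$ together with the two-point norm formula bounds $a_n,b_n$, so after extraction $\gamma_n\to a\delta(u)+b\delta(v)$ (no hypothesis needed); (b) a single-point molecule (one weight vanishes), which converges unless its image tends to $0$, in which case the second hypothesis gives $\tau\to0$ and hence $\|\gamma_n\|\to0$; and (c) the collapse $u=v=p$. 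When $p=0$ both hypotheses apply, so $\max(A,B,B')\to0$ and $\|\gamma_n\|\to0$. When $p\neq0$ I use $\sigma\to0$ to kill $|a_n|\,d(u_n,v_n)$ and the boundedness of $\tau$ to kill $|b_n|\,d(u_n,v_n)$, and then extract $a_n+b_n\to c$ (these are bounded since $\|\gamma_n\|$ is) to conclude $\gamma_n = a_n(\delta(u_n)-\delta(v_n)) + (a_n+b_n)\delta(v_n) \to c\,\delta(p)$. In each case the converse statements collected in Remark~\ref{converse} certify that the limit is the claimed one.

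The main obstacle is precisely the collapse case $u=v=p\neq0$ in the sufficiency direction. The difficulty is the asymmetry built into $\sigma$ through the selector $s(x,y)$: the hypothesis $\sigma\to0$ only controls the coefficient attached to the image that is \emph{farther} from the base point, so a separate argument is needed for the other coefficient's contribution $|b_n|\,d(u_n,v_n)$. Here I would argue by contradiction: if this term did not vanish along a subsequence, then, since $d(u_n,v_n)=d(f(x_n),f(y_n))\to0$, the ratio $|w|/d(\cdot,\cdot)$ would blow up, while $\min(d(f(x_n),0),d(f(y_n),0))\to d(p,0)>0$ stays bounded away from $0$; this would force $\tau(x_n,y_n)\to\infty$, contradicting the boundedness of $\tau$ guaranteed by Theorem~\ref{Poids}. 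This interplay between the two hypotheses and the a priori boundedness of $\sigma,\tau$ is the crux of the argument.
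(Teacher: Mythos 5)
Your overall strategy --- reducing to relative compactness of $w\widehat{f}(\mathcal{M})$ via Proposition~\ref{caracCompactWeight} and then running a sequential case analysis through Lemmas~\ref{lemma:formula}, \ref{PrepmaintheoremNew} and \ref{SequencesInFS2} --- is exactly the paper's. Your necessity argument (by contraposition rather than by the ``every subsequence has a further subsequence converging to $0$'' device, but with the same ingredients) is sound, and your treatment of the collapse case $u=v=p\neq 0$, which you rightly identify as delicate, is a correct variant of the paper's: the paper instead writes $\gamma_n$ as $\frac{w(x_n)-w(y_n)}{d(x_n,y_n)}\delta(f(x_n)) + w(y_n)\,\frac{\delta(f(x_n))-\delta(f(y_n))}{d(x_n,y_n)}$ and arranges $s(y_n,x_n)=1$, while you control the ``nearer'' coefficient by contradiction through the boundedness of $\tau$; both work.

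There is, however, a genuine gap in the sufficiency direction: your case (a) tacitly assumes that both limit points $u,v$ are distinct from the base point. If $w(x_n), w(y_n)\neq 0$, $f(x_n)\to u\neq 0$ and $f(y_n)\to 0$, you are still in ``$u\neq v$ with both weights nonzero'', but the two-point norm formula does \emph{not} bound $b_n$: since $d(f(y_n),0)\to 0$, Lemma~\ref{lemma:formula} only controls $|b_n|\,d(f(y_n),0)$, which can stay bounded while $|b_n|\to\infty$ (take $\lambda_1=0$ and $\lambda_2=d(f(y_n),0)^{-1}$ to see that all three quantities in the formula remain equal to $1$). Moreover the only possible norm limit of $b_n\delta(f(y_n))$ is $0$ (its support points tend to the base point, which belongs to no support), so a convergent subsequence of $\gamma_n$ exists only if $|b_n|\,d(f(y_n),0)\to 0$ --- and proving this is exactly where the $\tau$-hypothesis must be invoked: $\min(d(f(x_n),0),d(f(y_n),0))\to 0$ gives $\tau(x_n,y_n)=\frac{|w(x_n)-w(y_n)|}{d(x_n,y_n)}d(f(y_n),0)\to 0$ eventually, which together with the boundedness of $a_n$ (that part of the norm formula does survive) yields $|b_n|d(f(y_n),0)\le |a_n|d(f(y_n),0)+\tau(x_n,y_n)\to 0$. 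So the claim ``(no hypothesis needed)'' is false in this sub-case; it is precisely the paper's second case ($p\neq 0$, $q=0$) and one of the two places where the hypotheses of the theorem are genuinely used. Your case (b) does not absorb it, since there ``one weight vanishes'' means $w(y_n)=0$ rather than $f(y_n)\to 0$ (and, incidentally, in that case (b) the identity $\tau(x_n,y_n)=|a_n|\min(d(f(x_n),0),d(f(y_n),0))$ controls $\|\gamma_n\|=|a_n|d(f(x_n),0)$ only when the minimum is attained at $f(x_n)$; otherwise you need the $\sigma$-hypothesis as well).
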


\begin{proof} Without loss of generality, we can assume that $N$ is complete, see subsection \ref{remarkcomplete}.

"$\implies$" For this implication, we do not need the assumption ``$f(\text{coz}(w))$ is totally bounded".
	Assume that $w\widehat{f}$ is compact. Let $(x_n)_n, (y_n)_n$ be sequences in $M$ such that $x_n \neq y_n$ for every $n$ and $d(f(x_n) , f(y_n)) \to 0$. By assumption and Proposition \ref{caracCompactWeight}, the sequence $(\gamma_n)_n$ defined by 
	$$ \gamma_n = \dfrac{w(x_n)\delta(f(x_n)) - w(y_n)\delta(f(y_n))}{d(x_n,y_n)} $$
	has a convergent subsequence. So there exists $\gamma \in \FS_2(N)$ and an increasing sequence of integers $(n_k)_k$ such that $\gamma_{n_k} \to \gamma$. Since $d(f(x_n) , f(y_n)) \to 0$, notice that we actually have that $\gamma \in \FS_1(N)$ thanks to Lemma~\ref{SequencesInFS2}~$(i)$. If $\gamma = 0$, then $A(x_{n_k} , y_{n_k}) \to 0$, $B(x_{n_k} , y_{n_k}) \to 0$ and $B(y_{n_k} , x_{n_k}) \to 0$ thanks to Lemma~\ref{SequencesInFS2}~$(iii)$. By Lemma~\ref{inequalities}, we have that 
	$$\sigma(x_{n_k} , y_{n_k}) \leq 2\Big(A(x_{n_k} , y_{n_k})+\max\big(B(x_{n_k} , y_{n_k}) , B(y_{n_k} , x_{n_k})\big)\Big) \to 0.$$ 
	The argument above shows that for every subsequence of $(\sigma(x_n , y_n))_n$, we may extract a further subsequence which converges to 0. This implies that the sequence itself must converge to 0. 
	Next, if $|\supp(\gamma)| = 1$, we write $\gamma = a \delta(p)$. By Lemma~\ref{SequencesInFS2}~$(ii)$, $f(x_{n_k}) \to p$, $f(y_{n_k}) \to p$,  
	$$ \frac{|w(x_{n_k})|}{d(x_{n_k}, y_{n_k})}d(f(x_{n_k}), f(y_{n_k})) \to 0 \quad \text{and} \quad \frac{|w(y_{n_k})|}{d(x_{n_k}, y_{n_k})}d(f(x_{n_k}), f(y_{n_k})) \to 0 . $$
	This obviously implies that $\sigma(x_{n_k} , y_{n_k}) \to 0$ and so $\sigma(x_n,y_n) \to 0$ for the same reasons explained before.
	\smallskip
	
	We proceed similarly if  $(x_n)_n, (y_n)_n$ are sequences in $M$ such that $x_n \neq y_n$ for every $n$ and $\min( d(f(x_n) ,0) , d(f(y_n) , 0) ) \to 0$. Indeed, we consider the same sequence $(\gamma_n)_n \subset \FS_2(N)$ which admits a convergent subsequence (still denoted the same way for simplicity). As above, the limit $\gamma$ must contain at most one point in its support. If $\gamma = 0$ then $A(x_{n} , y_{n}) \to 0$, $B(x_{n} , y_{n}) \to 0$ and $B(y_{n} , x_{n}) \to 0$ by Lemma~\ref{SequencesInFS2}~$(iii)$ and $\sigma(x_n , y_n) \to 0$ by the same reasoning as above. The conclusion now follows from the following inequality proved in Lemma~\ref{inequalities}:
	$$ \tau(x_n ,y_n) \leq A(x_n , y_n) + \sigma(x_n ,y_n) \to 0.$$
	Finally if $\gamma = a \delta(p)$ then we may assume without loss of generality that $f(x_n) \to p$ while $f(y_n) \to 0$. Again Lemma~\ref{SequencesInFS2}~$(iii)$ implies that (passing to a subsequence if necessary) 
	$$\frac{w(x_n)}{d(x_n , y_n)} \to a \quad \text{and} \quad  \frac{w(y_n)}{d(x_n ,y_n)} d(f(y_n) , 0) \to 0.$$
	Now the conclusion follows from the triangle inequality: for $n$ large enough,
	\begin{align*} 
		\frac{|w(x_n) - w(y_n)|}{d(x_n,y_n)}\min(d(f(x_n),0) &, d(f(y_n),0))   \\
		 &	\leq \frac{|w(x_n)|}{d(x_n,y_n)}d(f(y_n) , 0)) + 	\frac{|w(y_n)| }{d(x_n,y_n)}d(f(y_n) ,0) \to 0. 
	\end{align*}
	\medskip

	"$\impliedby$" Let $(\gamma_n)_n$ be a sequence such that for every $n \in \N$,
	$$ \gamma_n = \dfrac{w(x_n)\delta(f(x_n)) - w(y_n)\delta(f(y_n))}{d(x_n,y_n)} .$$
    Assume first that for every $n$, $x_n, y_n \in \text{coz}(w)$.
    Since $N$ is assumed to be complete, $f(\text{coz}(w))$ is relatively compact in $N$.
    Then, up to passing to a subsequence, we may assume that $f(x_n) \to p$ while $f(y_n) \to q$. We will distinguish three cases.
    \smallskip
    
    If $p \neq 0$, $q\neq 0$ and $p \neq q$ then Theorem~\ref{Poids}~$(v)$ implies that the sequences $$\Big(\frac{|w(x_n) - w(y_n)|}{d(x_n , y_n)}\Big)_n \text{ and }
    \Big(\frac{s(x_n,y_n)|w(w_n)| + s(x_n , y_n)|w(y_n)|}{d(x_n , y_n)}\Big)_n$$
    are bounded. This yields that $\Big(\frac{w(x_n)}{d(x_n , y_n)}  \Big)_n$ and $\Big(\frac{w(y_n)}{d(x_n , y_n)}  \Big)_n$ are bounded as well, so we may extract subsequences which converge to $a \in \C$ and $b \in \C$ respectively. Therefore $(\gamma_n)_n$ has a subsequence which converges to $a \delta(p) + b \delta(q)$. 
    \smallskip

    If $p \neq 0$ and $q=0$, then by assumption $\tau(x_n , y_n) \to 0$. Now Theorem~\ref{Poids}~$(v)$ implies that 
    $\Big( \frac{d(f(x_n) , f(y_n))}{d(x_n ,y_n)}|w(x_n)| \Big)_n$
    is bounded. We easily deduce that the sequence $\Big(\frac{w(x_n)}{d(x_n , y_n)}  \Big)_n$ must be bounded as well, so we may extract a subsequence (still denoted the same way to simplify the notation) which converges to some $a\in \C$. Thus $\Big(\frac{w(x_n)}{d(x_n , y_n)}\delta(f(x_n)\Big)_n$ converges to $a \delta(p)$ and:
    \begin{align*}
        |w(y_n)|\frac{d(f(y_n) , 0)}{d(x_n , y_n)} \leq \frac{|w(x_n)|}{d(x_n,y_n)}d(f(y_n ),0) + \frac{|w(x_n) - w(y_n)|}{d(x_n ,y_n)}d(f(y_n) , 0) \to 0.
    \end{align*}
    The above inequality yields that $\Big(\frac{w(y_n)}{d(x_n , y_n)}\delta(f(y_n)\Big)_n$ converges to 0, and so $(\gamma_n)_n$ converges to $a \delta(p)$. 
    \smallskip

    Finally let us assume that $p = q$. If $p = q = 0$ then by assumption $\sigma(x_n , y_n) \to 0$ and $\tau(x_n ,y_n) \to 0$, so Lemma~\ref{lemma:formula} together with Lemma~\ref{inequalities} yields that $\gamma_n \to 0$. If $p=q \neq 0$ then $\sigma(x_n , y_n) \to 0$. Furthermore the sequence $\Big( \frac{w(x_n) - w(y_n)}{d(x_n ,y_n)} \Big)_n$ is bounded so we may extract a subsequence (again denoted the same way) which converges to some $a \in \C$. Up to extracting a further subsequence, we may assume that $s(y_n , x_n)= 1$ for every $n$. Now observe that 
    \begin{align*}
        \| \gamma_n - a \delta(p) \| \leq \Big\| \frac{w(x_n) - w(y_n)}{d(x_n , y_n)}\delta(f(x_n)) - a \delta(p)\Big\| + |w(y_n)| \Big\| \frac{\delta(f(x_n)) - \delta(f(y_n))}{d(x_n ,y_n)} \Big\| \to 0.
    \end{align*}
    Next, if there is $(n_k)_k \subset \N$ increasing such that $x_{n_k}, y_{n_k} \notin \text{coz}(w)$, then $\gamma_{n_k} = 0$ for every $k$ so $(\gamma_n)_n$ has a convergent subsequence. Finally, assume (up to a subsequence), that for every $n$, $x_n \in \text{coz}(w)$ and $y_n \notin \text{coz}(w)$. In that case, $\gamma_n = \frac{w(x_n)}{d(x_n,y_n)} \delta(f(x_n))$. Again, by compactness, we may assume that $f(x_n) \to p$. Now, with a similar reasoning as before, we show that $m_n \to 0$ if $p=0$ and that (a subsequence of) $(m_n)_n$ converges to $a\delta(p)$ for some $a\in \mathbb{C}$ if $p\neq 0$. This completes the proof.
\end{proof}
\medskip 

\begin{remark}~
\begin{enumerate}[itemsep=5pt]
    \item It is worth noticing that ``$w\widehat{f}$ compact'' does not imply that $f(\{x \in M  : w(x) \neq 0\})$ is totally bounded. To see this, take $w \equiv 1$ in \cite[Example 2.10]{ACP21}. In fact, this example shows that for some $\alpha >0$, $f(\{x \in M  : |w(x)|> \alpha\})$ can be unbounded. However, when $M$ is bounded, $f(\{x \in M  : |w(x)|> \alpha\})$ is totally bounded for every $\alpha >0$ whenever $w\widehat{f}$ compact.
    \item Assuming moreover that $w$ is a Lipschitz map, one can deduce the next simpler statement (which corresponds to \cite[Theorem~4.3]{DA}): $w\widehat{f}$ and $wC_f$ are compact if and only  
    $$  |w(x)| \frac{d(f(x) , f(y))}{d(x,y)} \to 0$$
    whenever $d(f(x),f(y)) \to 0$.
\end{enumerate}
\end{remark}

The previous theorem provided, roughly speaking, a characterization of compact weighted operators when the range space is compact. On the opposite side, the next result gives a characterization in the case when the domain space $M$ is uniformly discrete, that is there is some $\theta >0$ such that $\forall x \neq y \in M$, $d(f(x) ,f(y)) \geq \theta$.  

\begin{proposition} \label{CaracCompactUDB}
	Let $M,N$ be pointed metric spaces such that $M$ is uniformly discrete and bounded.  Let $w : M \to \C$ and $f : M \to N$ be any maps such that $f(0)=0$ or $w(0)=0$. Assume that $w\widehat{f}$ is a bounded operator. Then $w\widehat{f}$ and $wC_f = (w\widehat{f})^*$ are compact if and only if:
    \begin{enumerate}[$(i)$]
        \item $f(\{x \in M  : |w(x)| > \alpha \})$ is totally bounded for every $\alpha >0$, and 
        \item $ \lim w(x)d(f(x) , 0) = 0$ whenever $w(x) \to 0$ or $w(x) \to +\infty$.
    \end{enumerate}
\end{proposition}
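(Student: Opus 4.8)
The plan is to reduce everything to the relative compactness of the single set $U := \{w(x)\delta(f(x)) : x \in M\} \subset \F(N)$, and then to translate this into conditions $(i)$ and $(ii)$. As usual (see subsection~\ref{remarkcomplete}) I may assume $N$ is complete; note that $M$, being uniformly discrete, is automatically complete. By Proposition~\ref{caracCompactWeight}, $w\widehat{f}$ is compact if and only if the set $K := \{d(x,y)^{-1}(w(x)\delta(f(x)) - w(y)\delta(f(y))) : x\neq y\}$ is relatively compact in $\F(N)$. Writing $\theta \leq d(x,y) \leq \diam(M)$ for $x \neq y$ (this is where both the uniform discreteness and the boundedness of $M$ enter), the scalar factors $d(x,y)^{-1}$ range in a compact subset of $(0,+\infty)$, so a routine subsequence argument (extracting simultaneously a convergent scalar factor and a convergent vector part) shows that $K$ is relatively compact if and only if $L := \{w(x)\delta(f(x)) - w(y)\delta(f(y)) : x \neq y\}$ is. Finally, since $w(0)\delta(f(0)) = 0$, one has $U \subseteq L \cup \{0\} \subseteq U - U$, so $L$ is relatively compact if and only if $U$ is. Thus it suffices to prove that $U$ is relatively compact if and only if $(i)$ and $(ii)$ hold.

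For the implication $(i) + (ii) \Rightarrow U$ relatively compact, I would take any sequence $(x_n) \subset M$ and pass to a subsequence along which $|w(x_n)| \to \ell \in [0,+\infty]$. If $\ell = 0$ or $\ell = +\infty$, condition $(ii)$ gives $\|w(x_n)\delta(f(x_n))\| = |w(x_n)|\,d(f(x_n),0) \to 0$, so $w(x_n)\delta(f(x_n)) \to 0$. If $\ell \in (0,+\infty)$, then eventually $x_n \in \{x : |w(x)| > \ell/2\}$, so $(i)$ provides, after a further subsequence, a point $q$ with $f(x_n) \to q$, hence $\delta(f(x_n)) \to \delta(q)$; combining with a convergent subsequence of the bounded scalars $w(x_n) \to a$ gives $w(x_n)\delta(f(x_n)) \to a\delta(q)$. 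In every case a convergent subsequence exists, so $U$ is relatively compact.

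The reverse implication is where the free-space machinery does the real work, and I expect it to be the main obstacle. Assume $U$ is relatively compact. For $(i)$, fix $\alpha>0$ and a sequence $(x_n)$ with $|w(x_n)| > \alpha$; relative compactness gives a convergent subsequence $w(x_n)\delta(f(x_n)) \to v$, and passing to a further subsequence with $|w(x_n)| \to \ell \in [\alpha,+\infty]$, dividing by $w(x_n)$ shows that $\delta(f(x_n)) \to v/a$ when $\ell=|a|<\infty$ and $\delta(f(x_n)) \to 0$ when $\ell=+\infty$; either way $(f(x_n))$ is Cauchy, which is exactly total boundedness of $f(\{|w|>\alpha\})$. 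For $(ii)$ the delicate point is that the norms $|w(x_n)|\,d(f(x_n),0)$ are of indeterminate type ($0\cdot\infty$ or $\infty\cdot 0$) and cannot be controlled by elementary estimates, so I would argue by contradiction using supports. Suppose $|w(x_n)| \to 0$ but $w(x_n)\delta(f(x_n)) \to v \neq 0$ along a subsequence; since each $w(x_n)\delta(f(x_n)) \in \FS_1(N)$, Theorem~\ref{ThmNormToWeak} forces $v \in \FS_1(N)$, and Lemma~\ref{PrepmaintheoremNew} applied with $k=1$ to the unique support point $q$ of $v$ yields $f(x_n) \to q$, whence $d(f(x_n),0)$ is bounded and $\|w(x_n)\delta(f(x_n))\| = |w(x_n)|\,d(f(x_n),0) \to 0$, contradicting $v \neq 0$.

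The case $|w(x_n)| \to +\infty$ is symmetric: if $w(x_n)\delta(f(x_n)) \to v \neq 0$, then $\delta(f(x_n)) = w(x_n)^{-1}\bigl(w(x_n)\delta(f(x_n))\bigr) \to 0$ forces $f(x_n) \to 0_N$, while Lemma~\ref{PrepmaintheoremNew} (with $v \in \FS_1(N)$ again provided by Theorem~\ref{ThmNormToWeak}) would place the support point $q=0_N$ in $\supp(v)$, which is impossible since the base point never belongs to the support of an element of $\FS_1(N)$. This contradiction establishes $(ii)$ and completes the proof.
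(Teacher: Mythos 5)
Your proof is correct and follows essentially the same route as the paper's: both reduce, via the uniform discreteness and boundedness of $M$, to the relative compactness of the set $\{w(x)\delta(f(x)) : x\in M\}$ and then translate that into conditions $(i)$ and $(ii)$ using the structure of convergent sequences in $\FS_1(N)$. The only differences are cosmetic --- you sandwich this set between $L\cup\{0\}$ and $U-U$ where the paper instead normalizes by $d(x,0)$, and you spell out the equivalence with $(i)$--$(ii)$ that the paper dispatches with a bare citation of Lemma~\ref{SequencesInFS2}.
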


\begin{proof} As in the proof of Theorem \ref{CaracCompact}, one can assume that $N$ is complete. Hence,
    thanks to Lemma~\ref{SequencesInFS2}, $(i)$ and $(ii)$ are equivalent to $\{w(x)\delta(f(x))  : x \in M\}$ being relatively compact. Therefore, we only have to prove the next equivalence:
    $$w\widehat{f} \text{ is compact } \iff \{w(x)\delta(f(x))  : x \in M\} \text{ is relatively compact.}$$
    First, since $M$ is uniformly discrete and bounded, the set $\{w(x)\delta(f(x))  : x \in M\}$ is relatively compact if and only if $\{\frac{w(x)}{d(x,0)}\delta(f(x))  : x \in M\}$ is so. Thus ``$\implies$'' follows from Proposition~\ref{caracCompactWeight}. We now concentrate on ``$\impliedby$''. Let $(x_n)_n, (y_n)_n$ be sequences in $M$ such that $x_n \neq y_n$ and let $(\gamma_n)_n$ be the sequence defined by 
	$$ \gamma_n = \dfrac{w(x_n)\delta(f(x_n)) - w(y_n)\delta(f(y_n))}{d(x_n,y_n)} .$$
    Since $M$ is uniformly discrete and bounded, $\theta \leq d(x_n , y_n) \leq \diam(M)$ for every $n \in \N$. Therefore the sequence $(d(x_n , y_n))_n$ admits a subsequence which converges to some $\rho \in [\theta , \diam(M)]$. By assumption, the sequences $(w(x_n)\delta(f(x_n)))_n$ and $(w(y_n)\delta(f(y_n)))_n$ both have a convergent subsequence, say to $a \delta(p)$ and $b \delta(q)$ respectively. Hence $(\gamma_n)_n$ has a subsequence which converges to $\frac{a}{\rho} \delta(p) + \frac{b}{\rho} \delta(q)$. 
\end{proof}

Unfortunately, one cannot remove the boundedness assumption in the above proposition since the domain space in \cite[Example 2.10]{ACP21} is uniformly discrete, unbounded and $f(\{x \in M  : |w(x)|> \alpha\}) =M$ is unbounded.
\bigskip

\subsection{A link with the non weighted case.}\label{sectioncompacw=1}

First of all, let us extend \cite[Theorem A]{ACP21} to the complex scalars setting. 
Assume that $w=1$. Let $f : M \to N$ be a base point preserving Lipschitz map. 
We can see $\widehat{f}$ either as an operator from $\F(M,\C)$ to $\F(N,\C)$ or from $\F(M,\R)$ to $\F(N,\R)$. To avoid confusion, we will write $\widehat{f}_\R$ for the latter operator. Now from the isometric isomorphism given in Proposition~\ref{pten}, one can see that $\widehat{f} : \F(M,\C) \to \F(M,\C)$ is conjugated to the operator
$$\begin{array}{ccccc}
T & : & \F(M,\R) \pten \C & \to & \F(N,\R) \pten \C \\
 & & \gamma_1 \otimes 1 + \gamma_2 \otimes i & \mapsto & \widehat{f}_{\R}(\gamma_1) \otimes 1 + \widehat{f}_\R(\gamma_2) \otimes i .\\
\end{array}$$
Hence $\widehat{f} : \F(M,\C) \to \F(M,\C)$ is compact if and only if $\widehat{f}_\R : \F(M,\R) \to \F(M,\R)$ is compact. In particular, the characterization of compactness in \cite[Theorem A]{ACP21} works as well in the complex case:

\begin{theorem}\label{ThmAcomplex}
	Let $M,N$ be complete pointed metric spaces, and let $f : M \to N$ be a base point preserving Lipschitz mapping. Then $\widehat{f} : \F(M, \C) \to \F(N, \C)$ is compact if and only if the following assertions are satisfied:
	\begin{enumerate}
		\item[$(P_1)$]  For every bounded subset $S \subset M$, $f(S)$ is totally bounded in $N$;
		\item[$(P_2)$] $f$ is uniformly locally flat, that is, 
		$$ \lim\limits_{d(x,y) \to 0} \dfrac{d(f(x),f(y))}{d(x,y)} =0;$$
		\item[$(P_3)$] For every $(x_n,y_n)_n \subset \widetilde{M} : = \{(x,y) \in M \times M \; | \; x \neq y\}$ such that \\
		$\lim\limits_{n \to \infty} d(x_n,0) = \lim\limits_{n \to \infty} d(y_n,0) = \infty$, either 
		\smallskip
		
		\begin{itemize}
			\item $(f(x_n), f(y_n))_n$ has an accumulation point in $N \times N$, or
			\item $\underset{n\to+\infty}{\liminf}\,\dfrac{d(f(x_n),f(y_n))}{d(x_n,y_n)}=0$.
		\end{itemize}
	\end{enumerate}
\end{theorem}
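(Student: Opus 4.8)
The plan is to deduce everything from the real-scalar version \cite[Theorem A]{ACP21}, via the complexification identifications of Proposition~\ref{pten}. The discussion preceding the statement already shows that, under the isometric isomorphisms $\F(M,\C) \equiv \F(M,\R)\pten\C$ and $\F(N,\C) \equiv \F(N,\R)\pten\C$, the operator $\widehat{f} : \F(M,\C) \to \F(N,\C)$ is conjugated to the operator $T$ sending $\gamma_1 \otimes 1 + \gamma_2 \otimes i$ to $\widehat{f}_{\R}(\gamma_1)\otimes 1 + \widehat{f}_\R(\gamma_2)\otimes i$. Since compactness of a bounded operator is preserved under pre- and post-composition with isomorphisms, $\widehat{f}$ is compact if and only if $T$ is.

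The first step I would carry out is to check that $T$ is compact if and only if $\widehat{f}_\R : \F(M,\R)\to\F(N,\R)$ is compact. This is elementary: because $\C$ is two-dimensional over $\R$, the map $T$ acts diagonally on the two tensor components, so it is conjugate to the direct sum $\widehat{f}_\R \oplus \widehat{f}_\R$ acting on $\F(M,\R) \oplus \F(M,\R)$. A finite direct sum of operators is compact precisely when each summand is, whence $T$ is compact if and only if $\widehat{f}_\R$ is. (One implication also follows simply by restricting $T$ to the slice $\gamma_2 = 0$ and noting that the slice $\gamma_1 = 0$ behaves identically.)

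The second and final step is to invoke \cite[Theorem A]{ACP21}, which asserts that for complete pointed metric spaces the real operator $\widehat{f}_\R$ is compact if and only if the conditions $(P_1)$, $(P_2)$ and $(P_3)$ hold. Chaining the three equivalences $\widehat{f}$ compact $\iff$ $T$ compact $\iff$ $\widehat{f}_\R$ compact $\iff$ $(P_1)$--$(P_3)$ then yields the theorem.

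I do not expect a genuine obstacle here, since the substantive characterization is imported wholesale from the real case and the reduction is purely formal. The only point deserving a line of care is the equivalence of compactness for $T$ and for $\widehat{f}_\R$; the cleanest justification is the finite-direct-sum observation above, which sidesteps any appeal to the theory of tensor products of compact operators and keeps the argument self-contained.
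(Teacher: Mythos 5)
Your proposal is correct and follows essentially the same route as the paper: conjugate $\widehat{f}$ to the diagonal operator $T$ on $\F(M,\R)\pten\C$ via Proposition~\ref{pten}, observe that $T$ is compact if and only if $\widehat{f}_\R$ is (which is justified by the norm equivalence $\max\{\|\gamma_1\|,\|\gamma_2\|\}\leq\|(\gamma_1,\gamma_2)\|_\pi\leq\|\gamma_1\|+\|\gamma_2\|$ noted in the paper), and then invoke \cite[Theorem A]{ACP21}. Your direct-sum justification of the middle equivalence is a welcome extra line of detail that the paper leaves implicit.
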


\begin{remark}\label{strongradiallyflat}
In fact, it is straightforward to check that all the ingredients to prove \cite[Theorem A]{ACP21} work as well in the complex case. In particular, \cite[Remark 2.7]{ACP21} applies, and if $\widehat{f} : \F(M, \C) \to \F(N, \C)$ is compact, then we must have, in the case of an unbounded metric space $M$,
	$$
	\underset{d(x,y)\to +\infty}{\lim} \ \dfrac{d(f(x),f(y))}{d(x,y)} = 0.
	$$
\end{remark}

We retrieve \cite[Theorem 1.1]{Vargas1} as a direct consequence of the previous theorem.

\begin{corollary}
\label{w=1Lipversion}
Let $M,N$ be complete metric spaces and let $f : M \to N$ be a Lipschitz map. The following assertions are equivalent:
\begin{enumerate}[itemsep = 4pt]
 		\item[$(i)$] $C_f: \Lip(N) \to \Lip(M)$ is compact;
        \item[$(ii)$] $f$ is uniformly locally flat and $f(M)$ is totally bounded in $N$.
\end{enumerate}
\end{corollary}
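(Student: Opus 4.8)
The plan is to deduce this corollary from Theorem~\ref{ThmAcomplex} via the $\Lip$-to-$\Lip_0$ reduction of Section~\ref{Section1.2}, exactly as in the analogous corollaries in Section~\ref{bounded}. First I would invoke Proposition~\ref{Lip0toLip}: since here $w\equiv 1$, the operator $C_f:\Lip(N)\to\Lip(M)$ is compact if and only if $C_{f^e}:\Lip_0(N^e)\to\Lip_0(M^e)$ is compact, where $M^e=M\cup\{e\}$, $N^e=N\cup\{e\}$ are the pointed spaces obtained by first replacing the metrics by $\min(d,2)$ and then adjoining a base point $e$ at distance $1$ from every point, and $f^e$ extends $f$ by $f^e(e)=e$. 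By Schauder's theorem (item $(iii)$ in the introduction), $C_{f^e}=(\widehat{f^e})^*$ is compact if and only if the Lipschitz operator $\widehat{f^e}:\F(M^e)\to\F(N^e)$ is compact. So the whole problem reduces to characterizing compactness of $\widehat{f^e}$ through Theorem~\ref{ThmAcomplex}, applied to the complete pointed metric spaces $M^e$ and $N^e$ (these are complete because $M,N$ are, and adding one isolated-at-distance-one point preserves completeness).

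Next I would unwind conditions $(P_1)$, $(P_2)$, $(P_3)$ for $f^e$ on $M^e$ and show they collapse to ``$f$ uniformly locally flat and $f(M)$ totally bounded''. The crucial simplification is that $M^e$ has \emph{diameter at most $2$}: by construction $\diam(M^e)\le 2$, so $M^e$ is itself bounded. This makes $(P_3)$ vacuous, since there are no sequences with $d(x_n,0)\to\infty$ in a bounded space. It also means $(P_1)$ reduces to the single requirement that $f^e(M^e)$ be totally bounded in $N^e$; since $f^e(M^e)=f(M)\cup\{e\}$ and $\{e\}$ is a single point, this is equivalent to $f(M)$ being totally bounded in $N^e$, hence (as $e$ sits at distance $1$ from $N$) equivalent to $f(M)$ being totally bounded in $N$. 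For $(P_2)$, I would check that uniform local flatness of $f^e$ on $M^e$ is equivalent to uniform local flatness of $f$ on $N$. The only new pairs to consider in the flatness limit are those involving $e$, but $d^e(x,e)=1$ is bounded away from $0$, so such pairs never enter the regime $d^e(x,y)\to 0$; thus flatness of $f^e$ is governed entirely by pairs $x,y\in M$, where one must also observe that for $d(x,y)\to 0$ one has $\min(d(x,y),2)=d(x,y)$ and $\min(d(f(x),f(y)),2)=d(f(x),f(y))$ eventually, so passing to the truncated metric does not affect the flatness quotient.

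The truncation of the metric is the one genuinely fiddly point, and it is where I would be most careful rather than where I expect real difficulty: I must confirm that replacing $d$ by $\min(d,2)$ changes neither uniform local flatness nor total boundedness. For total boundedness this is immediate, since $\min(d,2)$ and $d$ are uniformly equivalent on bounded sets and induce the same topology, and a totally bounded set in one is totally bounded in the other. For flatness it follows from the remark above that small distances are unaffected by the truncation. Assembling these equivalences yields that $(P_1)$--$(P_3)$ for $f^e$ hold if and only if $f$ is uniformly locally flat and $f(M)$ is totally bounded in $N$, which is exactly condition $(ii)$; combined with the chain $(i)\iff C_{f^e}\text{ compact}\iff \widehat{f^e}\text{ compact}$ established first, this completes the proof. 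Since this is a routine reduction with no serious obstacle, I would state the argument compactly and leave the elementary metric verifications to the reader, in the same spirit as the preceding corollaries.
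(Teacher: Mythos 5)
Your proof follows exactly the paper's argument: reduce via Proposition~\ref{Lip0toLip} and Schauder's theorem to the compactness of $\widehat{f^e} : \F(M^e) \to \F(N^e)$, apply Theorem~\ref{ThmAcomplex} noting that boundedness of $M^e$ makes $(P_3)$ vacuous and collapses $(P_1)$ to total boundedness of $f^e(M^e)$, and then unwind the truncated metric. The paper dismisses the final metric verifications as ``a simple exercise,'' and your proposal correctly fills in those details; there is no gap.
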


\begin{proof}
In this proof, we use the notation introduced in Section \ref{Section1.2} in the special case $w=1$. By Proposition~\ref{Lip0toLip}, $C_f: \Lip(N) \to \Lip(M)$ is compact if and only if $C_{f^e} : \Lip_0(N^e) \to \Lip_0(M^e)$ is compact, and, by duality, if and only if $\widehat{f^e} : \F(M^e) \to \F(N^e) $ is compact. Since $M^e$ is bounded, by Theorem \ref{ThmAcomplex}, the compactness of $\widehat{f^e}$ is equivalent to the fact that $f^e$ is uniformly locally flat and $f^e(M^e)$ is totally bounded in $N^e$. Now, from the definition of the metric defined on $M^e$ in Section \ref{Section1.2}, it is a simple exercise to check that this is equivalent to $(ii)$.
\end{proof}

We conclude the section by further exploring the connections between the maps $w\widehat{f}$ and $\varphi : x \in M \mapsto w(x)\delta(f(x)) \in \F(N)$. Indeed, we proved in Theorem~\ref{Poids} that $w\widehat{f}$ is bounded if and only if $\varphi$ is Lipschitz. 
So it might be tempting to say that $w\widehat{f}$ is compact if and only if $\widehat{\varphi}$ is so. This is actually not really accurate. 
In fact, the unique extension of $\varphi$ to $\F(M)$ using Theorem~\ref{thm1} verifies $\overline{\varphi} = w \widehat{f}$. So $w\widehat{f}$ is compact if and only if $\overline{\varphi}$ is compact. Still, one can relate $\overline{\varphi}$ and $\widehat{\varphi}$ by a simple diagram chasing argument:
$$\def\commutatif{\ar@{}[rd]|{\circlearrowleft}}
	\xymatrix{
		M \ar[r]^{\varphi} \ar@{^{(}->}[d]_{\delta_M} & \F(N) \ar[d]_{\delta_{\F(N)}} \\
		\F(M) \ar[ru]^{\overline{\varphi}} \ar[r]^{\widehat{\varphi}} & \F\big(\F(N)\big) \ar@/_/[u]_{\beta}
	}
	$$
	In this diagram, $\beta$ is the quotient map of norm 1 such that $\beta(\sum a_i \delta_{\F(N)}(\gamma_i)) = \sum a_i \gamma_i$. In particular $\beta \circ \delta_{\F(N)} = Id$ and
	\begin{align*}
		&\beta \circ \widehat{\varphi}(\delta_M(x)) = \beta (\delta_{\F(N)}(\varphi(x))) = \varphi(x); \\
		&\overline{\varphi}(\delta_M(x)) = \varphi(x).
	\end{align*}
	Hence $\beta \circ \widehat{\varphi} = \overline{\varphi}$. Consequently, if $\widehat{\varphi}$ is compact, then $\overline{\varphi}$ is compact. Since Theorem~\ref{ThmAcomplex} characterizes the maps $\varphi$ such that $\widehat{\varphi}$ is compact, one can deduce the next corollary (we assume that $M$ is bounded only to keep a nicer form of the statement).

\begin{corollary} \label{thmA} 
 Let $M,N$ be pointed metric spaces with $M$ bounded. Assume that $\varphi : x \in M \mapsto w(x)\delta(f(x)) \in \F(N)$ is Lipschitz. If 
	\begin{itemize}
		\item $\varphi(M)$ is totally bounded, and
		\item $\varphi$ is uniformly locally flat, 
	\end{itemize}
	then $w\widehat{f}$ and $wC_f = (w\widehat{f})^*$  are compact operators.
\end{corollary}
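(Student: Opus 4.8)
The plan is to deduce the statement from Theorem~\ref{ThmAcomplex} applied to the Lipschitz map $\varphi$ itself, and then to transport compactness back to $w\widehat{f}$ through the diagram displayed immediately before the corollary. First I would observe that $\varphi$ is base point preserving: since $f(0_M)=0_N$ or $w(0_M)=0$, we have $\varphi(0_M)=w(0_M)\delta(f(0_M))=0$ in $\F(N)$. As $\F(N)$ is a Banach space it is complete, and by subsection~\ref{remarkcomplete} we may assume that $M$ is complete as well: replacing $\varphi$ by its unique Lipschitz extension to the completion $\overline{M}$ preserves the Lipschitz constant, keeps the range inside the compact set $\overline{\varphi(M)}$, and preserves uniform local flatness, while the induced isometry $\F(M)\equiv\F(\overline{M})$ identifies the corresponding linearizations. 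Thus $\varphi$ becomes a base point preserving Lipschitz map between the complete pointed metric spaces $M$ and $\F(N)$, to which Theorem~\ref{ThmAcomplex} applies.

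Next I would check the three conditions $(P_1)$, $(P_2)$, $(P_3)$ of Theorem~\ref{ThmAcomplex} for $\varphi$. Condition $(P_2)$ is nothing but the hypothesis that $\varphi$ is uniformly locally flat, read as $\lim_{d(x,y)\to 0}\|\varphi(x)-\varphi(y)\|_{\F(N)}/d(x,y)=0$. For $(P_1)$, any bounded $S\subset M$ satisfies $\varphi(S)\subset\varphi(M)$, and the latter is totally bounded by assumption, so $\varphi(S)$ is totally bounded. Finally $(P_3)$ is \emph{vacuously} satisfied, since $M$ is bounded and hence there are no sequences $(x_n,y_n)$ with $d(x_n,0)\to\infty$ and $d(y_n,0)\to\infty$. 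Consequently $\widehat{\varphi}\colon\F(M)\to\F(\F(N))$ is compact.

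To finish, I would invoke the diagram-chasing identity established just before the corollary, namely $\beta\circ\widehat{\varphi}=\overline{\varphi}=w\widehat{f}$, where $\beta$ is the norm-one quotient map. Since $\beta$ is bounded and $\widehat{\varphi}$ is compact, the composition $w\widehat{f}$ is compact; note that $w\widehat{f}$ is bounded because $\varphi$ is Lipschitz (Theorem~\ref{Poids}). Then Proposition~\ref{caracCompactWeight} (equivalently, Schauder's theorem) yields that $wC_f=(w\widehat{f})^*$ is compact as well. I do not expect any deep obstacle here, as the substantive work has been front-loaded into Theorem~\ref{ThmAcomplex} and the diagram preceding the statement; the only point demanding mild care is the completeness reduction for the source space $M$, ensuring that both hypotheses genuinely survive the passage to $\overline{M}$, after which the verification of $(P_1)$--$(P_3)$ is immediate, with $(P_3)$ holding precisely because $M$ is assumed bounded.
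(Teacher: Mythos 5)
Your proposal is correct and follows exactly the route the paper intends: verify $(P_1)$--$(P_3)$ of Theorem~\ref{ThmAcomplex} for $\varphi$ (with $(P_3)$ vacuous since $M$ is bounded), conclude that $\widehat{\varphi}$ is compact, and then use the identity $w\widehat{f}=\overline{\varphi}=\beta\circ\widehat{\varphi}$ together with Schauder's theorem. The completeness reduction you add is a sensible (and harmless) extra precaution.
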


\begin{remark}
    It does not hold that $\widehat{\varphi}$ is compact whenever $\overline{\varphi}$ is compact. Here is an example: Let $M=\{0\}\cup[1,+\infty[$, $N=\{0,1\}$. Consider the map $f : M \to N$ defined by $f(0)=0$ and $f(x)=1$ otherwise. Finally define the weight $w : M \to [0,+\infty[$ by: $w(x) = x$. Then it is easy to see that $\varphi$ is Lipschitz using Theorem~\ref{Poids}. Obviously $\overline{\varphi}$ is compact because it has finite rank. But, $\widehat{\varphi} : \delta(x) \in \F(M)\setminus\{0\} \mapsto \delta(w(x)\delta(1))$ is not compact since $\varphi :x \in M \mapsto x \delta(1) \in \F(N)$ is clearly not uniformly locally flat. In fact, $\varphi$ can be seen as the identity map from $M$ to (some space isometric to) $M$.
\end{remark}

\appendix

\section{Sequential criterion for compactness of weighted Lipschitz operators}\label{AnnexeA}

Let $M,N$ be pointed metric spaces.
Let $w : M \to \C$ and $f : M \to N$ be any maps such that $f(0)=0$ or $w(0)=0$. 
For convenience of the reader, let us recall the notation:
\begin{align*}
	A(x,y) &= \frac{1}{d(x,y)}|w(x)d(f(x),0) - w(y)d(f(y),0)|,  \\
	B(x,y) &= \frac{1}{d(x,y)}|w(x)d(f(x),0) - w(y)(d(f(x),0) - d(f(x),f(y))|.
\end{align*}

\begin{theorem}\label{CaracCompactgeneral}
	Let $M,N $ be complete pointed metric spaces. If $w\widehat{f} : \F(M) \to \F(N)$ is a bounded operator then $w\widehat{f}$ and $wC_f = (w\widehat{f})^*$ are compact if and only if the assertions below are satisfied for every sequences $(x_n)_n  , (y_n)_n \subset M$. To simplify the notation we write $a_n := w(x_n) d(x_n , y_n)^{-1}$ and $b_n := w(y_n) d(x_n , y_n)^{-1}$.
	\begin{enumerate}[leftmargin =*, itemsep=5pt]
		\item If $a_n \to a \neq 0$ and $b_n \to b \neq 0$, then\\
\noindent - either we have $A(x_n,y_n) \to 0$, $B(x_n,y_n) \to 0$ and $B(y_n,x_n) \to 0$;

\noindent - or there exist $p , q \in N$ and $(n_k)_k \subset \N$ increasing such that $f(x_{n_k}) \to p$, $f(y_{n_k}) \to q$.

		\item If $a_n \to a \neq 0$ and $|b_n| \to 0$ or $+\infty$ (resp. $|a_n| \to 0$ or $+\infty$ and $b_n \to b\neq 0$), then\\
  \noindent - either we have $A(x_n,y_n) \to 0$, $B(x_n,y_n) \to 0$ and $B(y_n,x_n) \to 0$;

\noindent - or there exist $p \in N$, $(n_k)_k \subset \N$ increasing such that $f(x_{n_k}) \to p$ and $d(f(y_{n_k}), 0) b_{n_k}\to 0$ (resp. $f(y_{n_k}) \to p$ and $d(f(x_{n_k}), 0) a_{n_k}\to 0$).

		\item If $a_n \to 0$ and $b_n \to 0$, then $A(x_n , y_n) \to 0$, $B(x_n , y_n) \to 0$ and $B(y_n , x_n) \to 0$.
		\item If $|a_n| \to +\infty$, $|b_n| \to +\infty$ and $|a_n- b_n| \to 0$ or $+\infty$, then $A(x_n , y_n) \to 0$, $B(x_n , y_n) \to 0$ and $B(y_n , x_n) \to 0$.
		\item If $|a_n| \to +\infty$, $|b_n| \to +\infty$ and $a_n - b_n \to a \neq 0$, then\\
\noindent - either $A(x_n , y_n) \to 0$, $B(x_n , y_n) \to 0$ and $B(y_n , x_n) \to 0$;\\
\noindent - or there exist $p\in N$, $(n_k)_k \subset \N$ increasing such that $f(x_{n_k}) \to p$, $f(y_{n_k}) \to p$ and $~b_{n_k}d(f(x_{n_k}), f(y_{n_k})) \to 0$.

	\end{enumerate}
\end{theorem}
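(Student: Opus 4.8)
The plan is to reduce the whole statement to a question about relative compactness of a single family of images of molecules, and then to run a case analysis governed by Lemma~\ref{SequencesInFS2}. By Proposition~\ref{caracCompactWeight}, the operators $w\widehat{f}$ and $wC_f$ are compact if and only if the set
$$\mathcal{S} := \Big\{ \frac{w(x)\delta(f(x)) - w(y)\delta(f(y))}{d(x,y)} \; : \; x \neq y \in M \Big\} = w\widehat{f}(\mathcal{M})$$
is relatively compact in $\F(N)$. Since $\mathcal{S} \subset \FS_2(N)$ and $N$ may be assumed complete, Theorem~\ref{ThmNormToWeak} together with the Eberlein--\v{S}mulian theorem, exactly as in Corollary~\ref{compactiffweakly}, shows that $\mathcal{S}$ is relatively compact precisely when every sequence $(\gamma_n)_n \subset \mathcal{S}$, written $\gamma_n = a_n\delta(f(x_n)) - b_n\delta(f(y_n))$, admits a norm-convergent subsequence. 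Thus the theorem amounts to characterising in metric terms when such a sequence always clusters.

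The bridge between the abstract and the concrete is Lemma~\ref{lemma:formula}: applying it with $\lambda_1 = a_n$, $\lambda_2 = -b_n$ at the points $f(x_n), f(y_n)$, the three scalars governing the norm of $\gamma_n$, and hence the conditions in item $(iii)$ of Lemma~\ref{SequencesInFS2}, are exactly $A(x_n,y_n)$, $B(x_n,y_n)$ and $B(y_n,x_n)$. Consequently ``$\gamma_{n} \to 0$'' is equivalent to ``$A(x_n,y_n) \to 0$, $B(x_n,y_n)\to 0$ and $B(y_n,x_n)\to 0$'', which is the first alternative listed in each item of the statement. With this dictionary, both implications follow from the trichotomy of Lemma~\ref{SequencesInFS2}. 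For the forward implication, if the operator is compact then every such $(\gamma_n)_n$ has a subsequence $\gamma_{n_k} \to \gamma \in \FS_2(N)$, and reading off Lemma~\ref{SequencesInFS2} gives either $\gamma = 0$, hence the $A,B$ alternative via the dictionary, or $|\supp(\gamma)| \in \{1,2\}$, hence convergence of $f(x_{n_k})$ and $f(y_{n_k})$, which is the second alternative. For the converse, assuming the stated conditions, I would take an arbitrary sequence, normalise it by passing to a subsequence, and invoke the converse statements collected in Remark~\ref{converse} to produce the limit.

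The splitting into five items is nothing but a normalisation of the pair $(a_n,b_n)$: after passing to a subsequence one may assume $|a_n| \to \ell_a$ and $|b_n| \to \ell_b$ in $[0,+\infty]$, and, where these limits are finite, that $a_n$, $b_n$ and $a_n - b_n$ converge. Items $(1)$ and $(3)$ then correspond to both $a_n,b_n$ converging to nonzero limits, respectively to $0$; item $(2)$ to exactly one of them converging to a nonzero limit while the other degenerates to $0$ or $\infty$; and items $(4)$--$(5)$ to both moduli blowing up, split according to the behaviour of $a_n - b_n$. In each regime the information on $(a_n,b_n)$ is used to discard the sub-cases of Lemma~\ref{SequencesInFS2} that cannot occur and to rewrite the surviving ones in the quantitative form of the statement; here Lemma~\ref{inequalities} is the tool that converts between $A,B$ and the quantities $\sigma,\tau$ controlling boundedness, so that finiteness of $\|w\widehat{f}\|$ can be fed into the argument.

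I expect the main obstacle to be precisely this exhaustive bookkeeping: verifying that the five regimes of $(a_n,b_n)$ genuinely cover every subsequential behaviour, and that within the single-support alternative --- case $(ii)$ of Lemma~\ref{SequencesInFS2}, which itself bifurcates according to whether $f(y_n) \to p$ or $f(y_n)$ stays away from $p$ --- the coefficients and the base-point distances $d(f(\cdot),0)$ are tracked correctly. The genuinely delicate points are the mixed regimes, where one coefficient degenerates while the other blows up: there one must combine the $\sigma$- and $\tau$-bounds coming from boundedness of $w\widehat{f}$ with Lemma~\ref{SequencesInFS2} to drive $f(x_n)$ and $f(y_n)$ towards the base point before any conclusion about convergence of $(\gamma_n)_n$ can be drawn.
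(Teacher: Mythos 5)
Your proposal follows the paper's own proof essentially verbatim: the reduction via Proposition~\ref{caracCompactWeight} to relative compactness of $w\widehat{f}(\mathcal{M})$, the dictionary (via Lemma~\ref{lemma:formula}, with $\lambda_1=a_n$, $\lambda_2=-b_n$ at $f(x_n),f(y_n)$) identifying norm-convergence of $m_n$ to zero with $A(x_n,y_n),B(x_n,y_n),B(y_n,x_n)\to 0$, the trichotomy on $|\supp(\gamma)|$ from Lemma~\ref{SequencesInFS2}, and the subsequential normalisation of $(a_n,b_n)$ are exactly the paper's ingredients, and your computation of the dictionary is correct. Just be aware that what you defer as ``bookkeeping'' is where the substance of the paper's argument lies: in items (3) and (4) one must \emph{rule out} the alternatives $|\supp(\gamma)|\in\{1,2\}$ entirely by pairing $m_n$ with bump functions built from Lemma~\ref{LemmaConstrctionLipF} (e.g.\ extracting $\langle h,m_n\rangle=a_n-b_n$ to reach a contradiction), and in items (2) and (5) the second alternative carries extra quantitative decay such as $b_{n_k}d(f(y_{n_k}),0)\to 0$ rather than mere convergence of the images --- all of which your outline correctly anticipates but does not execute.
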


\begin{proof}
We start by showing that the conditions $(1)$ to $(5)$ are necessary. Assume that $w\widehat{f}$ is compact. Let $(x_n)_n, (y_n)_n \subset M$ be sequences such that, for every $n$, $x_n \neq y_n$. Let, for any $n$,
$$m_n := \frac{(w(x_n)\delta(f(x_n)) - w(y_n)\delta(f(y_n)))}{d(x_n,y_n)} = a_n \delta(f(x_n)) - b_n \delta(f(y_n)).$$
By Proposition \ref{caracCompactWeight}, the sequence $(m_n)_n$ has a convergent subsequence. To simplify the notation, we will assume that the whole sequence $(m_n)_n$ converges to $\gamma \in \F(M)$. Note that
$
m_n  \in \FS_2(N)
$
so, by Lemma \ref{WeaklyClosed}, there exist $\alpha, \beta\in \C$, $p,q \in N$, such that $\gamma = \alpha\delta(p) + \beta\delta(q)$. 
\begin{enumerate}[leftmargin =*, itemsep=5pt]
\item Assume that $a_n \to a \neq 0$ and $b_n \to b \neq 0$. If $|\supp(\gamma)| = 2$, by Lemma~\ref{SequencesInFS2}, there exist $p,q \in N, p\neq q$, such that $f(x_n) \to p$ and $f(y_n) \to q$.
If $|\supp(\gamma)| = 1$, we can assume, by Lemma~\ref{PrepmaintheoremNew}, that there is a subsequence $(f(x_{n_k}))_k$ converging to $p\neq 0$. This implies that $b_{n_k}\delta(f(y_{n_k})) = a_{n_k}\delta(f(x_{n_k}))- m_{n_k}$ converges to some $c\delta(r)$. But since $b_{n_k} \to b$, this forces $f(y_{n_k}) \to r$. Finally, if $|\supp(\gamma)| = 0$, $\| m_n \| \to  0$ so, by Lemma~\ref{lemma:formula}, $A(x_n,y_n) \to 0$, $B(x_n,y_n) \to 0$ and $B(y_n,x_n) \to 0$.
\item Assume that $a_n \to a \neq 0$ and $b_n\to 0$. If $|\supp(\gamma)| = 2$, then, as before and up to a subsequence, we can assume that $f(x_n) \to p$ and $f(y_n)\to q$. This implies that $b_n d(f(y_n), 0) = b_n \| \delta(f(y_n)) \| \to 0$. If $|\supp(\gamma)| = 1$, one can assume that either $f(x_n) \to p$ or $f(y_n)\to p$. But if $f(y_n)\to p$, we have $b_n \delta (f(y_n)) \to 0$ so we must have that $(a_n \delta(f(x_n)))_n$ converges to $a\delta(p)$ and this implies that $(f(x_n))_n$ converges to $p$. On the other hand, if $f(x_n) \to p$, then $b_n\delta(f(y_n)) = a_n\delta(f(x_n))- m_n$ must converge to an element of $\FS_1(N)$. Since $b_n \to 0$, we must have $b_n\delta(f(y_n)) \to 0$.
Finally, if $|\supp(\gamma)| = 0$, $\| m_n \| \to  0$ and we conclude as in the previous case.
\item[(2')] Assume that $a_n \to a \neq 0$ and $|b_n|\to +\infty$. If $|\supp(\gamma)| = 0$, we conclude as in the previous case. If $|\supp(\gamma)| = 2$, we have that $f(x_n) \to p\neq 0$. But then since $b_n\delta(f(y_n)) = a_n\delta(f(x_n))- m_n$ converges and $|b_n| \to +\infty$, this forces $(f(y_n))_n$ to converge to $0$ and contradicts the fact that $|\supp(\gamma)| = 2$. Finally, if $|\supp(\gamma)| = 1$ then we distinguish two cases. In the first case we assume that some subsequence $f(x_{n_k}) \to p \neq 0$. Then the same reasoning as before shows that necessarily $b_{n_k} \delta(f(y_{n_k})) \to 0$. 
Indeed, otherwise a subsequence of $(f(y_{n_k}))_k$ must converge to some $q\neq 0$ and the fact that $|b_{n_k}| \to +\infty$ gives a contradiction. 
The second case is that some subsequence $f(y_{n_k}) \to p \neq 0$. Then, the equality
$$
\delta(f(y_{n_k})) = \frac{a_{n_k}}{b_{n_k}} \delta(f(x_{n_k})) - \dfrac{m_{n_k}}{b_{n_k}}
$$
and the fact that $\dfrac{m_{n_k}}{b_{n_k}} \to 0$ implies that $(\frac{a_{n_k}}{b_{n_k}} \delta(f(x_{n_k})))_k$ goes to $\delta(p)$. This forces $f(x_{n_k}) \to p$ and we readily obtain a contradiction since $\frac{a_{n_k}}{b_{n_k}} \to 0$. So this case cannot happen.

\item Assume that $a_n \to 0$ and $b_n \to 0$. If $|\supp(\gamma)| = 2$, then $(f(x_n))_n$ and $(f(y_n))_n$ converge. But, in that case, using our assumptions, $m_n \to 0$. If $|\supp(\gamma)| = 1$, we can assume that $f(x_n) \to p \neq 0$ so that $a_n \delta(f(x_n)) \to 0$ and hence $b_n \delta(f(y_n)) = a_n \delta(f(y_n)) -m_n$ converges. This implies that $f(y_n) \to p$, and in turn $m_n \to 0$, which is a contradiction. Hence, we necessarily have $|\supp(\gamma)| = 0$, which yields the result.

\item Assume $|a_n| \to +\infty$, $|b_n| \to +\infty$ and $a_n - b_n \to 0$. We cannot have $|\supp(\gamma)| = 2$ thanks to Lemma~\ref{SequencesInFS2}.
If $|\supp(\gamma)| = 1$, then one can assume for instance that $f(x_n) \to p$ where $\gamma =\alpha\delta(p)$, $\alpha\neq 0, p\neq 0$. Now, note that we have
$$
m_n = (a_n - b_n) \delta(f(x_n)) + b_n (\delta(f(x_n)) - \delta(f(y_n))).
$$
where $(a_n - b_n) \delta(f(x_n)) \to 0$ by assumption. Hence, $b_n (\delta(f(x_n)) - \delta(f(y_n))) \to \alpha\delta(p)$. Since $|b_n| \to +\infty$, we must have $\| \delta(f(x_n)) - \delta(f(y_n)) \| \to 0$ so $f(y_n) \to p$. Finally, choose $k\in \Lip_0(N)$ such that $k(p)=k(f(x_n)) = k(f(y_n))=1$ for $n$ large enough. Then
$$
\alpha = \left\langle k, \alpha\delta(p) \right\rangle = \lim\limits_{n} b_n (k(f(x_n)) - k(f(y_n))) = 0,
$$
which is a contradiction. Hence, we must have $|\supp(\gamma)| = 0$ and we get the desired result.
\item[(4')] Assume $|a_n| \to +\infty$, $|b_n| \to +\infty$ and $|a_n - b_n| \to +\infty$. As in the previous case, the case $|\supp(\gamma)| = 2$ cannot happen, and we only have to prove that neither can the case $|\supp(\gamma)| = 1$. Assume that $m_n \to \alpha\delta(p)$ with $\alpha\neq 0$ and $p\neq 0$. In that case, if, say, $f(x_n) \to p \neq 0$, then we must have $f(y_n) \to p$ as well. Otherwise, up to a subsequence, $d(f(y_n), p) \geq \epsilon > 0$, and using Lemma~\ref{LemmaConstrctionLipF} we pick $h\in \Lip_0(N)$ such that $h(p) = h(f(x_n)) = 1$ and $h(f(y_n)) = 0$ for $n$ large enough. Then we get that
$
\left\langle h, m_n \right\rangle = a_n
$
which must converge to $\alpha$, which is not the case. So $f(y_n) \to p$, and with the help of $h$, we get
$$
\left\langle h, m_n \right\rangle = a_n -  b_n
$$
must converge as well, but again, by assumption, it is not the case.
\item Assume $|a_n| \to +\infty$, $|b_n| \to +\infty$, and $a_n - b_n\to a \neq 0$. As before, we can only have $|\supp(\gamma)| = 0$ or $1$.

If $|\supp(\gamma)| = 1$, up to a subsequence we can assume that $f(x_n) \to p \neq 0$ where $\gamma = \alpha\delta(p)$. By choosing an appropriate Lipschitz function, one easily shows that there is a subsequence of $(f(y_n))_n$ converging to $p$. For simplicity, assume that $f(y_n) \to p$. The identity
$$
  m_n = (a_n - b_n) \delta(f(x_n)) + b_n (\delta(f(x_n)) - \delta(f(y_n)))  
$$
implies that $b_n (\delta(f(x_n)) - \delta(f(y_n))) \to \gamma - \alpha \delta(p) = (a-\alpha)\delta(p)$. By choosing a function $h \in \Lip_0(N)$ such that $h(m) = 1$ in a neighborhood of $p$ , we hence obtain, for $n$ large enough,
$$a-\alpha = b_n (h(f(x_n)) - h(f(y_n))) = b_n (1-1) = 0.$$
This implies that $a = \alpha$, so that $|b_n|d(f(x_n), f(y_n)) = \| b_n (\delta(f(x_n)) - \delta(f(y_n))) \| \to 0$.

Finally, if $|\supp(\gamma)| = 0$,  $\| m_n \| \to  0$ and we conclude as before.
\end{enumerate}

\medskip

To conclude, we claim that the conditions $(1)$ to $(5)$ are sufficient. Indeed, by Proposition~\ref{caracCompactWeight}, we only have to prove that $(m_n)_n$ has a convergent subsequence. Since $(a_n)_n$ and $(b_n)_n$ are sequences in $\C$, there is $(n_k)_k \subset \N$ increasing such that $a_{n_k} \to a\in \C$ or $|a_{n_k}| \to +\infty$, $b_{n_k} \to b\in \C$ or $|b_{n_k}| \to +\infty$, and $a_{n_k}  - b_{n_k} \to c \in \C$ or $|a_{n_k}  - b_{n_k}| \to +\infty$. Then one of the cases $(1)$ to $(5)$ must happen and we can find a convergent subsequence to $(m_{n_k})_k$. We leave the details to the reader.
\end{proof}

\begin{example}
Let $\alpha, \beta >0$ 
and define $d_n=n^{-\alpha}$ for every $n\in \N$.
Let $M= \mathbb{N} \cup \{0\}$ with the metric given by $d(n,m)=d_n+d_m$ for $n, m \in \N$ such that 
$n\neq m$, and $d(n,0) = d_n$ otherwise.
Now consider $f : M \to M$ the map defined by $f(0)=0$ and, for every $n\in \mathbb{N}$, $f(n)=n-1$.  Finally define the weight map $w : M \to \mathbb{R}$ by setting $w(0)=0$ and $w(n) = n^{-\beta}$. By \cite[Proposition~1.6]{ACP20}, the linear map given by $\psi : \F(M) \to \ell_1(\mathbb{N})$ defined by $\delta(n) \mapsto d_ne_n$ is an isometric isomorphism (it is stated in the real setting, but it holds true in the complex setting). 
It is moreover easily checked that $w\widehat{f}$ is conjugated to an operator $T : \ell_1(\mathbb{N}) \to \ell_1(\mathbb{N})$ such that $Te_1 = 0$ and 
$$Te_n = \dfrac{d_{n-1}}{d_n} w(n) e_{n-1} = \left(\frac{n}{n-1}\right)^{\alpha} \dfrac{1}{n^{\beta}} e_{n-1}$$
otherwise. That is, $T$ is a weighted backward shift. It is clear that $T$, and hence $w \widehat{f}$, is compact whenever $\beta > 0$.
\smallskip

We will see that any of the items $(1)$ to $(5)$ from the previous theorem can happen. We keep using the notation $a_n = w(x_n)d(x_n,y_n)^{-1}$ and $b_n = w(y_n)d(x_n,y_n)^{-1}$ for some $x_n$ and $y_n$ specified below.
\begin{enumerate}[itemsep = 5pt]
\item If $\alpha=\beta$, $x_n = n$ and $y_n =  n-1$, we have $a_n,  b_n  \to \frac{1}{2}$ as $n \to +\infty$ and $f(x_n), f(y_n) \to 0$.
\item If $\alpha = \beta$, $x_n  = n$ and $y_n = n^2$, we have $a_n \to 1$ and $b_n \to 0$. Then, we see that $f(x_n) \to 0$ and $d(f(y_n),0)b_n \to 0$.
\item[$(2')$] If $\alpha = 2\beta$, $x_n  = n^2$ and $y_n = n$, we have $a_n \to 1$ and $b_n \to +\infty$. Then, we see that $f(y_n) \to 0$ and $d(f(x_n),0)w(x_n)d(x_n, y_n)^{-1} \to 0$.
\item If $\beta > \alpha$, $x_n = n$ and $y_n =  n-1$, $a_n,  b_n  \to 0$ and one can check that $A(x_n,y_n), B(x_n,y_n)$ and $B(y_n,x_n)$ tend to $0$.
\item If $\alpha > \beta +1$, $x_n=n$ and $y_n = n-1$, we have $a_n, b_n \to +\infty$, $w(x_n) d(x_n , y_n)^{-1} - w(y_n) d(x_n , y_n)^{-1}  \sim \frac{-\beta}{2n^{\beta-\alpha  +1}} \to 0$ and $A(x_n,y_n), B(x_n,y_n)$ and $B(y_n,x_n)$ tend to $0$.
\item If $\alpha = \beta +1$, $x_n=n$ and $y_n = n-1$, we have $a_n, b_n \to +\infty$, $w(x_n) d(x_n , y_n)^{-1} - w(y_n) d(x_n , y_n)^{-1} \to \frac{-\beta}{2}$ and $A(x_n,y_n), B(x_n,y_n)$ and $B(y_n,x_n)$ tend to $0$.
\end{enumerate}
\end{example}

\noindent \textbf{Acknowledgements :} The third author was supported by the French ANR project No$.$ ANR-20-CE40-0006.

\end{document}